\theoremstyle{plain}
\theoremstyle{plain}
\newtheorem{theorem}{Theorem}[section]
\newtheorem{lemma}[theorem]{Lemma}
\newtheorem{corollary}[theorem]{Corollary}
\theoremstyle{remark}
\newtheorem*{remark}{Remark}
\newtheorem*{notation}{Notation}
\newtheorem*{definition}{Definition}
\DeclareMathOperator{\spn}{span}
\newcommand{\bspn}[1]{\ensuremath{\spn\langle #1 \rangle}}
\numberwithin{equation}{section}
\newcommand{\sk}[1]{\raisebox{-5mm}{\includegraphics[height=36pt]{5skein#1.pdf}}}
\title{Multi-crossing Number for knots and the Kauffman Bracket Polynomial}
\author[C. Adams et al]
{Colin Adams, Orsola Capovilla-Searle, Jesse Freeman, Daniel Irvine, Samantha Petti, Daniel Vitek, Ashley Weber, Sicong Zhang}
\address{Colin Adams, Department of Mathematics, Williams College, Williamstown, MA 01267}
\email{cadams@williams.edu}
\address{Orsola Capovilla-Searle, Department of Mathematics, Bryn Mawr College,  Bryn Mawr, PA 19010-2899}
\email{ocapovilla@brynmawr.edu}
\address{Jesse Freeman, Department of Mathematics, Williams College, Williamstown, MA 01267}
\email{Jesse.B.Freeman@williams.edu}
\address{Daniel Irvine, Department of Mathematics, University of Michigan, 530 Church Street, Ann Arbor, MI 48109-1043}
\email{DIRVINE@umich.edu}
\address{Samantha Petti, Department of Mathematics, Williams College, Williamstown, MA 01267}
\email{snp1@williams.edu}
\address{Daniel Vitek, Department of Mathematics, Fine Hall, Princeton University, Princeton, NJ 08544-1000}
\email{dvitek@math.princeton.edu}
\address{Ashley Weber, Department of Mathematics, 151 Thayer Street, Brown University, Providence, RI 02912}
\email{aweber@math.brown.edu}
\address{Sicong Zhang, Department of Mathematics, Building 380, Stanford University, Stanford, CA, 94305}
\email{zhangsc91@gmail.com}
\begin{document}

\maketitle
\begin{abstract} 
A multi-crossing (or $n$-crossing) is a singular point in a projection at which $n$ strands cross so that each strand bisects the crossing. We generalize the classic result of Kauffman, Murasugi, and Thistlethwaite relating the span of the bracket polynomial to the double-crossing number of a link,  $\text{span}\langle K \rangle\leq 4c_2$, to the $n$-crossing number. In this paper we find the following lower bound on the $n$-crossing number in terms of the span of the bracket polynomial for any $n \geq 3$:
$$ \text{span} \langle K \rangle \leq \left(\left\lfloor\frac{n^2}{2}\right\rfloor + 4n - 8\right) c_n(K).$$ 
We also explore $n$-crossing additivity under composition, and find that for $n\geq4$ there are examples of knots $K_1$ and $K_2$ such that $c_n(K_1\#K_2)=c_n(K_1)+c_n(K_2)-1$. Further, we present the the first extensive list of calculations of $n$-crossing number knots. Finally, we explore the monotonicity of the sequence of $n$-crossings of a knot, which we call the crossing spectrum.\end{abstract}

\section{Introduction}

The classical projection of a link $K$ only considers crossings where two strands meet and bisect each other. In \citep{Triple},  an $n$-crossing, also known as multi-crossing, was introduced. (See \citep{PT} and \citep{TT} for multi-crossings as applied to graph projections.) A multi-crossing is defined to be a singular point in a projection at which $n$ strands cross, such that each strand bisects the crossing. In \citep{Triple} it was shown that for every $n\geq 2$, any knot or link $K$ has a projection with only $n$-crossings.  Hence, one can define $c_{n}(K)$ to be the minimal number of $n$-crossings in a projection with only $n$-crossings. An $n$-crossing has $n$ strands that are labeled top to bottom $1,2,... ,n$ respectively. There are various types of $n$-crossings characterized by where the strands with different heights are located. For all $n$-crossings, we read the height of the strands clockwise around the crossings, always beginning with the top strand. 

The purpose of this paper is to find a lower bound on multi-crossing number in terms of the span of the bracket polynomial. Independently (see \citep{Kauffman87, Kauffman88, Murasugi, Thistlethwaite})Kauffman, Murasugi, and Thistletwaite proved that $\text{span}\langle K \rangle\leq 4c_2(K)$. In \citep{Triple} and \citep{Quadruple}, Adams used a similar approach to find a lower bound on triple and quadruple crossing number: $\text{span}\langle K \rangle \leq 8c_3(K)$ and $\text{span}\langle K \rangle \leq 16 c_4(K)$. 

Our main result shows that  for all $n \geq 3$, $$ \text{span}\langle K \rangle \leq \left(\left\lfloor\frac{n^2}{2}\right\rfloor + 4n - 8\right) c_n(K).$$  This bound agrees with the previous bounds for $n=3,4$.

To obtain the bound, we must determine how to compute the bracket polynomial directly from an $n$-crossing projection. We compute the bracket polynomial of a knot from its double crossing projection by resolving the crossings as some combination of A-splits and B-splits. Each unique way of resolving all of the crossings in a projection is called a state. Each state $s$ contributes a summand of $A^{a(s)}A^{-b(s)}(-A^{2}-A^{-2})^{|s|-1}$ to the bracket polynomial, where $|s|$ is the number of disjoint connected components, $a(s)$ is the number of A-splits and $b(s)$ is the number of B-splits. Thus, $<K>=\sum A^{a(s)}A^{-b(s)}(-A^{2}-A^{-2})^{|s|-1}$, where the sum is over all states $s$ of the projection. 

As shown in the traditional skein relation,  A-splits and B-splits are the only ways to resolve double crossings. However, a multi-crossing can be resolved in a variety of ways. To calculate the bracket polynomial from an $n$-crossing projection we need an $n$-skein relation. The 3 and 4-skein relations appear in \citep{Triple} and \citep{Quadruple}, respectively. In Figure 1 we provide an example of the 5-skein relation associated to the 12345 type 5-crossing. 

        ~ 

\begin{figure}[H]
\begin{align*}
<12345> &=A^2\left(\sk{23}+\sk{20}+\sk{8}+\sk{4}+\sk{19}\right)\\
&\quad+A^0\left(\sk{22}+\sk{21}+\sk{34}+\sk{30}+\sk{10}+\sk{5}\right.\\
&\quad\left.+\ \sk{3}+\sk{2}+\sk{6}+\sk{24}+\sk{18}+\sk{15}\right)\\
&\quad+A^{-2}\left(\sk{20}+\sk{36}+\sk{35}+\sk{32}+2\ \sk{29}+\sk{14}+\sk{11}\right.\\
&\quad\left.+\ \sk{4}+2\ \sk{1}+\sk{7}+\sk{28}+\sk{26}+\sk{17}+\sk{16}\right)\\
&\quad+A^{-4}\left(\sk{37}+\sk{34}+\sk{38}+\sk{33}+\sk{30}+\sk{31}+\sk{13}\right.\\
&\quad\left.+\ \sk{10}+\sk{3}+\sk{2}+\sk{6}+\sk{27}+\sk{24}+\sk{15}\right)\\
&\quad+A^{-6}\left(\sk{36}+\sk{39}+\sk{40}+\sk{32}+\sk{29}+\sk{8}+\sk{12}\right.\\
&\quad\left.+\ \sk{1}+\sk{25}\right) + A^{-8}\left(\sk{41}+\sk{38}+\sk{31}+\sk{9}\right) + A^{-10}\ \sk{42}.
\end{align*}

\caption{The skein relation for the crossing $\langle12345\rangle$, with width 12.}\label{fig:5skein}
\end{figure}

\begin{definition}A \textit{term} $S$ of an $n$-skein relation consists of a particular split and the \textit{coefficient} associated to it. The coefficient is of the form $J A^t$, where $J$ and $t$ are integers. We define the power of $S$, denoted as $P(S)$, to be this value $t$.  The \textit{width} of a skein relation $R$, denoted $w(R)$,  is the difference between the highest power and lowest power appearing in the skein relation. 
\end{definition}

Note that there are multiple $n$-skein relations because each type of $n$-crossing generates a unique $n$-skein relation. For a fixed $n=2,3,4$, all $n$-skein relations for a given $n$ have the same width. This pattern does not hold, however, for $n=5$. There are twenty-two 5-skein relations with a width of 12 and two 5-skein relations with a width of 8. Similarly, for $n=6,7,8$, some $n$-skein relations have different widths as illustrated in the following table.

\begin{center}
    \begin{tabular}{ | c | c | c | p{5cm} |}
    \hline
  $ n $&    Realized Widths \\ \hline
    5&8,\text{ }12  \\ \hline
  6&14,\text{}16,\text{}18  \\ \hline
7&16,\text{}20,\text{}24   \\ \hline
8&24,\text{}26,\text{}28,\text{}30,\text{}32 \\ \hline
    \end{tabular}
\end{center}

The terms of an $(n+1)$-skein relation are obtained by identically adding a strand on top of the split associated to each term in the corresponding $n$-skein relation and then resolving the resulting double crossings as A-splits or B-splits. We then adjust the powers of the coefficients of the terms according to the rules in the 2-skein relation. For each split in the $n$-skein relation, we resolve the crossings in all possible combinations of $A$-splits and $B$-splits. The resulting terms form the $(n+1)$-skein relation.

 
To obtain the bracket polynomial we must split apart all the $n$-crossings in a projection in every possible way according to the terms of the $n$-skein relation.  We want to understand how terms in the $n$-skein relation are related to each other so as to understand which terms  contribute the highest and lowest powers to the bracket polynomial. To do so, we define split moves, split distance, high states and low states.

\begin{definition} A \textit{split move} on a split $S$ replaces two adjacent arcs in the split by the only other two arcs that connect their endpoints without forming additional crossings. 
\end{definition}

 \begin{figure}[h]
\begin{center}
\includegraphics[width=0.2\columnwidth,angle=0] {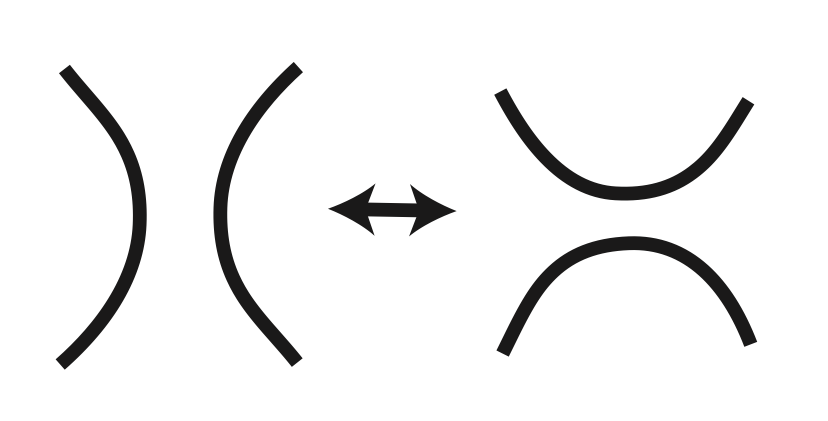} 
\caption{\label{sm} A split move.}
\end{center}
\end{figure}

Note that each split can be obtained from any other split by a sequence of split moves. Each move changes the number of components in a state by $\pm 1$.
 
\begin{definition} The \textit{split distance, $d(S,T)$} between two terms $S$ and $T$ is the minimum number of split moves required to change from the split of $S$ to the split of $T$. Note that this defines a metric on the set of splits.
\end{definition}

Given a multi-crossing with a fixed strand order, define a partial order on the terms in the corresponding skein relation as follows. Let $S$ and $T$ be terms in an n-skein relation. Define the covering relation $S \prec T$ if the splits of the terms $S$ and $T$ are related by one split move, and $P(S)=P(T)-2$. Extend ``$\prec $" to a partial order ``$<$" via transitivity, i.e. $S<T$ if and only if $S=S_0 \prec S_1 \prec \dots \prec S_k=T$ for a chain of splittings $\{S_i\}|_{0 \le i \le k}$.

\begin{definition} Given an n-skein relation, we define a \textit{high split} (resp. \textit{low split}) to be a split appearing in a term that is maximal (resp. minimal) with respect to the partial order ``$<$". \end{definition}

The following lemma, appearing in \citep{Quadruple}, generalizes here.

\begin{lemma}\label{high split is high}
Let $s$ be a state such that for a given multi-crossing, denoted as $x$, it is resolved with the split of a term $S$. If $T<S$ (resp. $T>S$), then changing the split at $x$ from the split of $S$ to the split of $T$ will not increase (resp. decrease) the highest (resp. lowest) power in the polynomial associated to $s$.
\end{lemma}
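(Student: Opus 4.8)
The plan is to track exactly how the extreme powers contributed by a single state change under one split move, and then telescope along a chain realizing the partial order. First I would fix the state $s$, isolate the crossing $x$, and write $c = \sum_{x' \ne x} P(\cdot)$ for the total power coming from the splits at all crossings other than $x$; this quantity is constant throughout the argument. If $x$ is resolved by a split $U$, the resulting state contributes $A^{c+P(U)}(-A^2-A^{-2})^{|s_U|-1}$ to the bracket polynomial, where $s_U$ denotes the whole state and $|s_U|$ its number of components. Since $(-A^2-A^{-2})^{|s_U|-1}$ ranges in degree from $-2(|s_U|-1)$ to $2(|s_U|-1)$, the highest power this state contributes is $H(U) = c + P(U) + 2(|s_U|-1)$ and the lowest is $L(U) = c + P(U) - 2(|s_U|-1)$.

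Next I would treat a single covering relation $U \prec V$. By definition of the covering relation $P(V) = P(U)+2$, and the two splits differ by exactly one split move; since each split move changes the number of components of the ambient state by exactly $\pm 1$, we have $|s_V| - |s_U| = \pm 1$. Substituting into the formulas above gives
$$H(V) - H(U) = 2 + 2\bigl(|s_V|-|s_U|\bigr) \in \{0,4\}, \qquad L(V)-L(U) = 2 - 2\bigl(|s_V|-|s_U|\bigr) \in \{0,4\}.$$
The pleasant feature here is that both differences are nonnegative regardless of whether the component count goes up or down, so both $H$ and $L$ are weakly increasing as one moves upward across a single covering step.

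The full statement then follows by transitivity. If $T < S$, I would choose a chain $T = S_0 \prec S_1 \prec \dots \prec S_k = S$ of terms; applying each covering move to the crossing $x$ inside the fixed state $s$ produces intermediate states, consecutive ones differing by a single split move, so that $H(S_0) \le H(S_1) \le \dots \le H(S_k)$, i.e. $H(T) \le H(S)$. Hence replacing the split $S$ at $x$ by $T$ does not increase the highest power. Symmetrically, if $T > S$ a chain running from $S$ up to $T$ yields $L(S) \le L(T)$ by the monotonicity of $L$, so the replacement does not decrease the lowest power.

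The one point demanding care is the justification that each covering step can be realized by a genuine single split move performed within the entire diagram, so that the ``$\pm 1$'' component count applies to the ambient state $s_U$ rather than to the crossing in isolation; this is precisely the remark that split moves change the number of components of a state by $\pm 1$, and it is what allows the purely local data (the power $P$ changes by $2$, the component count by $1$) to control the global extreme powers. I expect this to be the only place where the combinatorics of general $n$ could in principle intrude, but since the sign bookkeeping above uses nothing about $n$ beyond these two local facts, it is routine and identical for every $n$ — which is exactly why the $n=4$ argument of \citep{Quadruple} carries over.
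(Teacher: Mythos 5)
Your proposal is correct and follows essentially the same route as the paper: analyze a single covering step $U \prec V$ (power shifts by $2$, ambient component count by $\pm 1$, so the extreme powers of the state can only move weakly upward), then telescope along a chain realizing $T < S$ or $T > S$. Your version merely makes the paper's terse bookkeeping explicit via the formulas $H(U) = c + P(U) + 2(|s_U|-1)$ and $L(U) = c + P(U) - 2(|s_U|-1)$ and the observation that both differences lie in $\{0,4\}$, which is a fine (and arguably clearer) way to write the same argument.
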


\begin{proof}
If $T \prec S$, the split of $S$ and the split of $T$ are 1 split move apart.  Thus replacing $S$ with $T$ changes $|s|$ by at most 1. Since $T \prec S$, $P(T)=P(S)-2$, and so the summand from the state corresponding to the  $T$ split cannot have a power higher than the summand from the state corresponding to the $S$ split. Extending by transitivity, if $T<S$, we can change from the split of $S$ to that of $T$ by a sequence of split moves that does not increase the highest power of the state. The proof for $T>S$ follows similarly.
\end{proof}

\begin{definition}
A \textit{maximal state}, denoted $s_{max}$, is a state that contributes a summand with the highest power of $A$ to the bracket polynomial, and such that all its splits are high splits. Let $M$ denote this highest power of $A$. A \textit{minimal state}, denoted $s_{min}$, is a state that contributes a summand with the lowest power of $A$ to the bracket polynomial, and such that all its splits are low splits. Let $m$ denote this lowest power of $A$. We refer to the number of connected components in the state $s_{max}$ as $|s_{max}|$ and the number in $s_{min}$ as $|s_{min}|$.
\end{definition}

We always require that $s_{max}$ states and $s_{min}$ states be the result of exclusively high splits or exclusively low splits. This is always possible by Lemma \ref{high split is high}.  Starting from any state $s$ that contributes the highest (resp. lowest) power to the bracket polynomial, we can always change each split to a high (resp. low) split to obtain a state $s'$ that also contributes the highest (resp. lowest) power. 

Given a projection of a link $K$ with $c_{n}(K)$ $n$-crossings, label the different $n$-crossings $x_{i}$, each of which is associated to a skein relation $R_{i}$, where $1\leq i \leq c_{n}$. Denote the highest power of $A$ appearing in the skein relation $R_{i}$ of $x_{i}$ as $h_{i}$ and the lowest power of $A$ as $l_{i}$. The span of $K$ is determined by the highest power of $A$ and the lowest power $A$. Then, the width of the skein relation $R_i$ associated to $x_{i}$ is $w(R_{i})=h_{i}-l_{i}$.

From the skein relations,
$$M \leq \left( \sum^{c_n}_{i=1} h_{i} \right) +2(|s_{max}|-1)$$
$$m \geq \left(\sum^{c_n}_{i=1} l_{i}\right) -2(|s_{min}|-1)$$

The above statements are inequalities rather than equalities because terms with high or low splits do not necessarily have the coefficients with the highest or lowest power of $A$. In all,
\begin{align*}
\text{\text{span}}\langle K \rangle& = M-m\leq \left( \sum^{c_n}_{i=1} h_{i} -\sum^{c_n}_{i=1} l_{i}\right)+2(|s_{max}|-1)+2(|s_{min}|-1)\\
&=\underbrace{\left(\sum^{c_n}_{i=1} w(R_{i})\right)}_{\text{Contribution from width}}+\underbrace{2(|s_{max}|+|s_{min}|-2)}_{\text{Contribution from components}}.\end{align*}

We  investigate the contribution from width and the contribution from the number of components to obtain a bound on the span of the bracket polynomial. In Section \ref{Floor Proof} we prove Theorem \ref{Upper Bound}, which states that for $n\geq 2$, the maximum width of an $n$-skein relation is $\lfloor\frac{n^{2}}{2}\rfloor$. We further prove that this bound is realized for all $n \geq 2$. In Section \ref{components} we  prove Theorem \ref{componentsthm}, which states that for $n \geq 3$, given any $c_{n}$ $n$-crossing link diagram with an $s_{max}$ state and an $s_{min}$ state, $|s_{max}|+|s_{min}|\leq (2n-4)c_{n}+2$. In Section \ref{together} we prove the Main Theorem:  $\text{span}\langle K \rangle \leq (\left\lfloor\frac{n^2}{2}\right\rfloor+4n-8)c_{n}$ for any $c_{n}$ $n$-crossing link with $n\geq 3$. Further, we explore how we can obtain a tighter bound given specific conditions on $K$. In Section \ref{spectrum}, we define the crossing spectrum of a knot $K$ to be the sequence of $n$-crossing numbers of $K$. We then explore the crossing spectrum as a knot invariant and evaluate the relationship between multi-crossing numbers for the same knot as we vary $n$.  In Section \ref{additivity} we explore the additivity of $n$-crossing knots and links under composition. Finally, in Section \ref{chart} we illustrate and explain the methods to calculate the first extensive list of $n$-crossing numbers for many prime and composite knots.

\section{An upper bound on the width of an $n$-skein relation}\label{Floor Proof}

As described in \citep{Quadruple}, a \textit{parallel split} in an n-skein relation is a splitting that consists of n parallel arcs. The top row in Figure 1 contains two parallel splits.

In this section, we  prove the upper bound on the width of an $n$-skein relation is $\lfloor \frac{n^2}{2} \rfloor$. To do so we must understand how width changes when we construct an $(n+1)$-skein relation from an $n$-skein relation by adding an overstrand.

The width of the skein relation  could increase by more than $$\left\lfloor \frac{(n+1)^2}{2} \right\rfloor- \left\lfloor \frac{n^2}{2} \right\rfloor= \begin{cases} n \textrm{  when $n$ is even} \\n+1 \textrm{  when $n$ is odd}\end{cases}$$ if there exist two terms $S$ and $T$ that meet the following criteria:

\begin{enumerate}
\item The number of intersections of the overstrand with $S$ and the overstrand with $T$ is greater than  \\
$\left\lfloor \frac{(n+1)^2}{2} \right\rfloor- \left\lfloor \frac{n^2}{2} \right\rfloor$.
If we resolve the intersections between the overstrand and the term with a higher exponent of $A$ as A-splits and resolve the intersections between the overstrand and the term with a lower exponent of A as B-splits, the difference in powers of the new terms increases by the total number of intersections of the overstrand with the two terms $S$ and $T$.  
\item The exponents of the coefficients of the terms are already far apart. The ability to separate the exponents of $A$ attached to two terms by a large amount is not worrisome if the exponents of those terms are already very close together. 
\end{enumerate}

The essence of the proof is that no pair of terms in a single skein relation will possess both of these properties. If a single overstrand can intersect many strands in the splits of a pair of terms, then the difference in the exponents of $A$ between the two terms is sufficiently smaller than the upper bound of $\left\lfloor \frac{n^2}{2} \right\rfloor$. \\

To rigorously prove this fact, we need to consider how the difference in power of the two terms relates to the maximum number of intersections an overstrand can create. Lemma \ref{biglem} provides an explicit relationship between these values. The width theorem follows almost directly from Lemma \ref{biglem}. We will first introduce notation and definitions that allow us to discuss the the process of adding an overstrand and increasing the width of the subsequent skein relation. We then prove Lemmas \ref{first}- \ref{istar}, which are all necessary to prove Lemma \ref{biglem}. 

\medskip

\begin{notation} Let $T$ be a term in an $n$-skein relation $R$  corresponding to a crossing $x$. Let $O$ be an overstrand placed over $x$ to create a crossing $x'$ with $(n+1)$-skein relation $R'$. Let $T'$ be a term in $R'$ obtained by resolving the intersections of $T$ with $O$ according to a particular combination of A-splits and B-splits. Let $O'$ be an overstrand placed over $x'$ to create a crossing $x''$ with  $(n+2)$-skein relation $R''$. We may also consider $O'$ as a overstrand placed on the term $T$ before the intersections of $O$ with $T$ are resolved (and in this case $O'$ will intersect $O$). Let $P(T)$ represent the power of $A$ in the coefficient of the term $T$.  Let $I(O,T)$ be the set of arcs in the split diagram of $T$ that $O$ intersects; $|I(O,T)|$ denotes its cardinality. Let $I(O,O',T)$ denote the arcs that are in the split diagram of $T$ that both $O$ and $O'$ intersect. We abbreviate its cardinality by $T^*$. When we say that we are performing a geometric operation, such as a split move or an arc surgery, on a term $T$, we mean that we are performing this operation on the split of the term $T$. The power of the term $T$ remains unchanged.
\end{notation}

Finally, we define a quantity $I^*$ that will be the key to the main theorem.

 \begin{definition}
 Given two terms $S$ and $T$ of an n-skein relation for a given crossing $x$ and two overstrands $O$ and $O'$, we define $I^*=\min\{S^*,T^*\}$. 
 \end{definition}
 
\begin{definition} \label{separated} Let $O$ be an overstrand on a split $T$.  We call the endpoints of any arc that intersects $O$ \textit{separated} endpoints.
\end{definition}

\begin{definition} 

Here we describe \textit{arc surgery}. This is a procedure for generating a new split $T_1$ from an existing split $T_0$, given a strand $O$ overlaid. The resulting split $T_1$ has the same number of arcs as $T_0$, but $T_1$ has two fewer intersections with $O$. Although arc surgery relates splits within a $n$-skein relation, arc surgery is performed with respect to an overstrand $O$. For simplicity we rotate the split so that $O$ is vertical.

\begin{enumerate} 
\item Choose an \textit{east pair} of endpoints and a \textit{west pair} of endpoints  in $T_0$ satisfying the following three conditions:

\begin{enumerate}
\item The two endpoints of the west pair are to the left of $O$.  The two endpoints of the east pair are to the right of $O$. 
\item When one considers the set of complementary regions bounded by the arcs of $T_0$ and the circle connecting all the endpoints, the endpoints of a given pair are on the boundary of the same region.
\item All four endpoints are separated endpoints.
\end{enumerate}

\item Delete all the arcs that intersect $O$. Connect the east pair of endpoints by an arc. Next, connect the west pair of endpoints by an arc.  All other arcs are left unchanged.

\item There may be endpoints with no arcs extending from them as a result of the deletion in step (2). There is one and only one way to connect these endpoints so that the newly created arcs all intersect $O$ and do not intersect each other. Connect the endpoints in this way.
\end{enumerate}
\end{definition}

 \begin{figure}[H]
        \centering
        \begin{subfigure}[b]{0.19\textwidth}
                \centering
                \includegraphics[width=\textwidth]{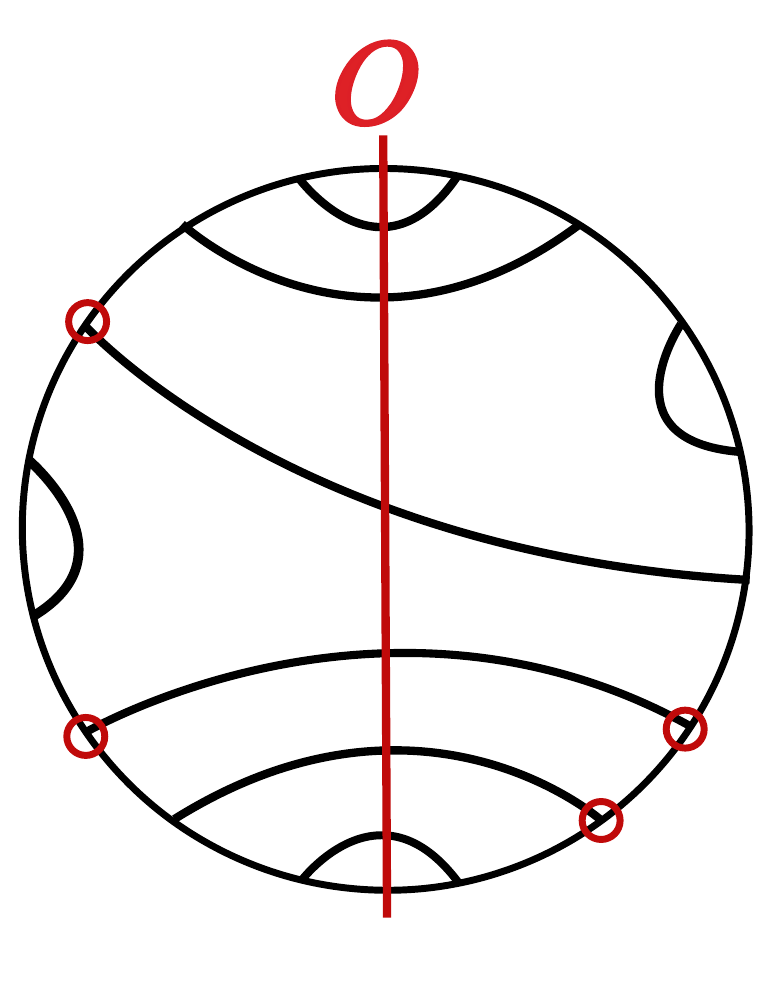}
                \caption{ }
                \label{fig:sample before}
        \end{subfigure}%
        \qquad
        ~ 
        \begin{subfigure}[b]{0.2\textwidth}
                \centering
                \includegraphics[width=\textwidth]{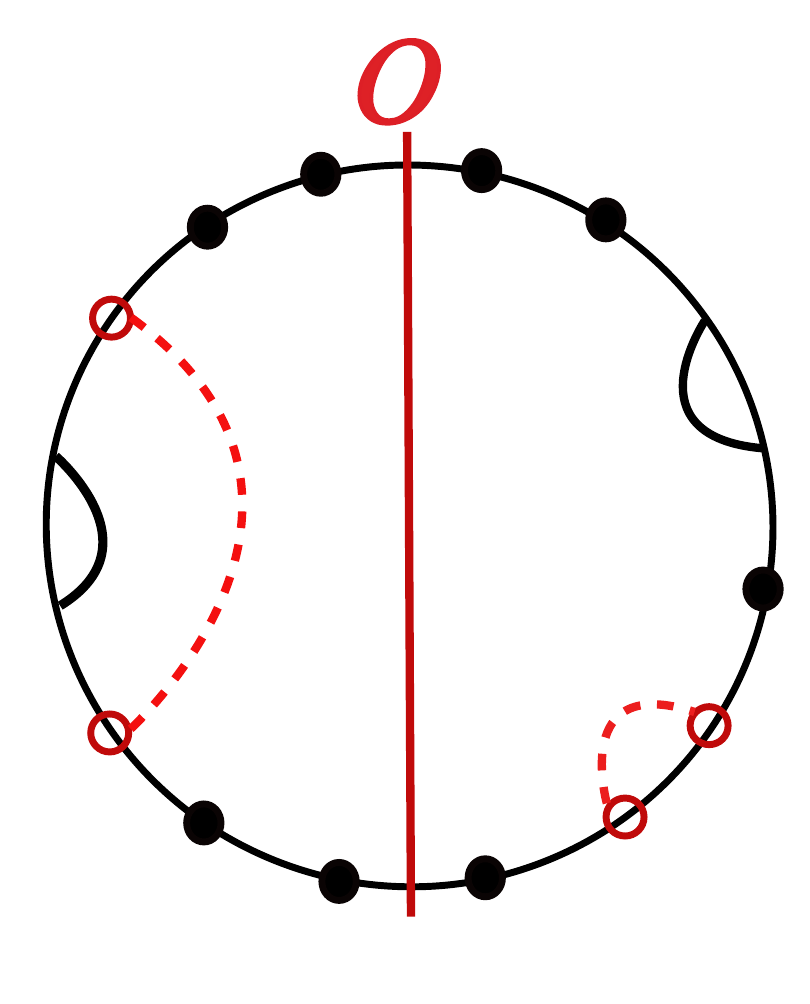}
                \caption{ }
                \label{fig:sample mid}
        \end{subfigure}
        \qquad
        ~ 
        \begin{subfigure}[b]{0.2\textwidth}
                \centering
                \includegraphics[width=\textwidth]{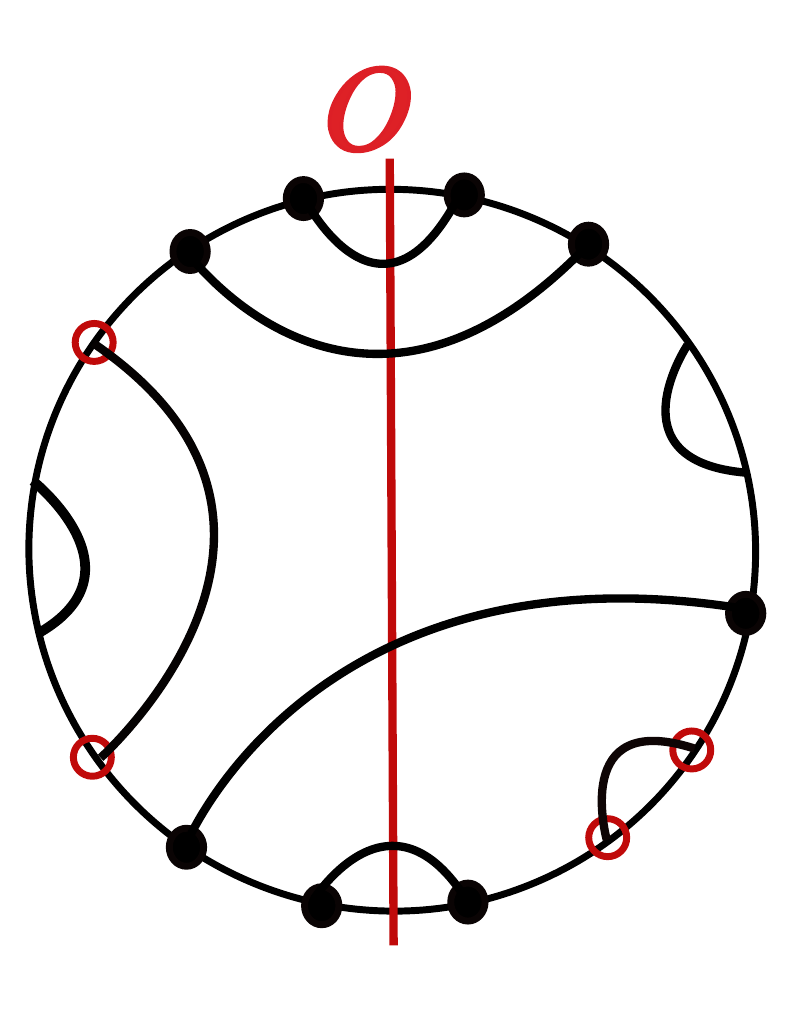}
                \caption{ }
                \label{fig:sample post}
        \end{subfigure}
        \caption{(a) Step 1: Select an east and west pair from the endpoints of $T_0$. (b) Step 2: Delete the arcs extending from the endpoints of the east and west pair and then connect east and west pairs. (c) Step 3: Connect the remaining endpoints with arcs intersecting O to create $T_1$.}\label{fig:arcsurgery}
\end{figure}

\begin{remark}
A split move is a special type of arc surgery that occurs when the west and east pairs of endpoints are all part of one polygon. An arc surgery is more powerful than a split move. Figure \ref{fig:foreman} illustrates two splits that are separated by one arc surgery, but they are separated by at least 7 split moves.  Note that an arc surgery is a directed operation, unlike a split move. Lemma \ref{before} will introduce the concept of inverse arc surgeries. 

\end{remark}

\begin{figure}[H]
        \centering
        \begin{subfigure}[b]{0.2\textwidth}
                \centering
                \includegraphics[width=\textwidth]{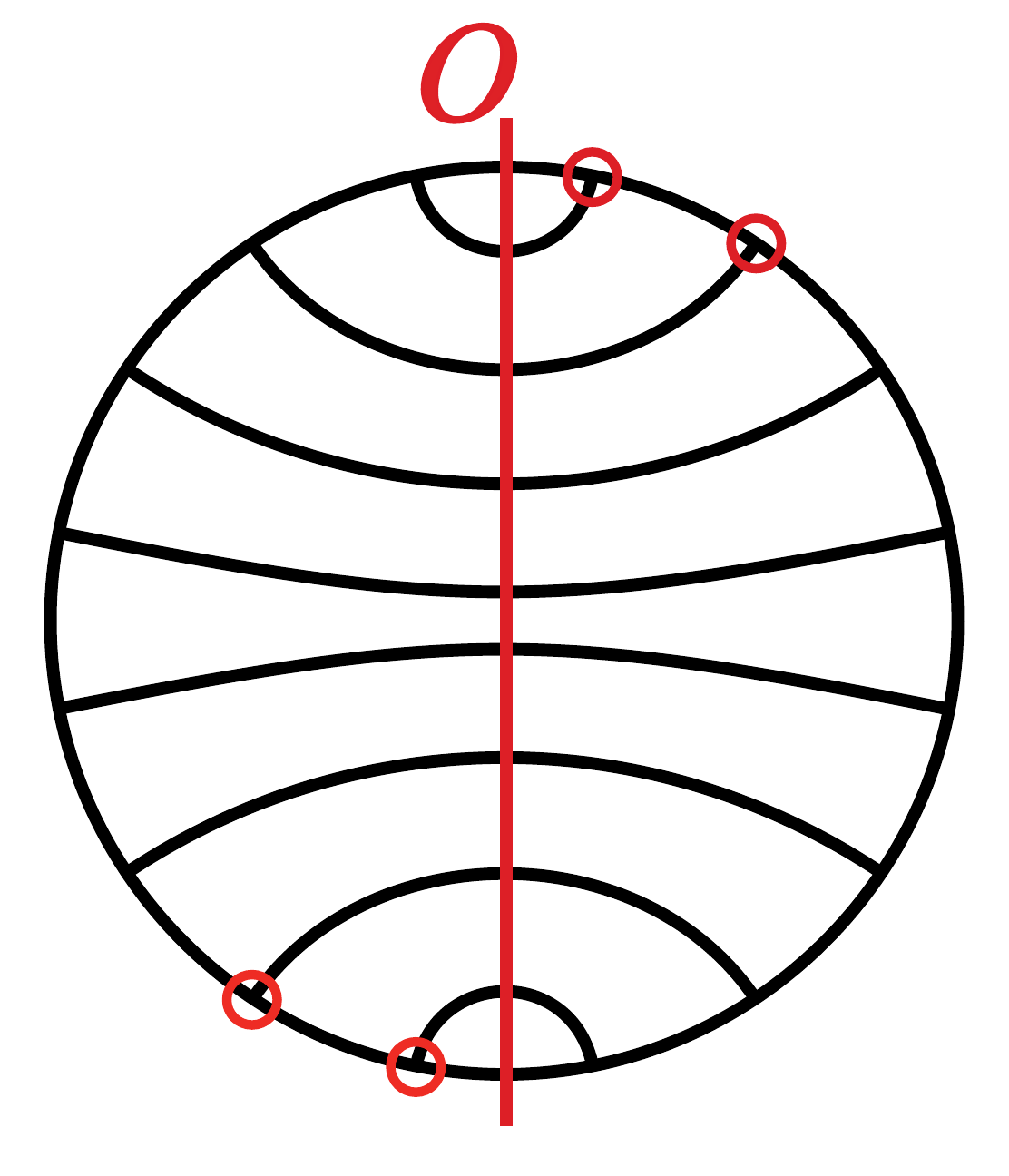}
                \caption{Before arc surgery}
                \label{fig:beforeman}
        \end{subfigure}%
        \qquad
        ~ 
        \begin{subfigure}[b]{0.2\textwidth}
                \centering
                \includegraphics[width=\textwidth]{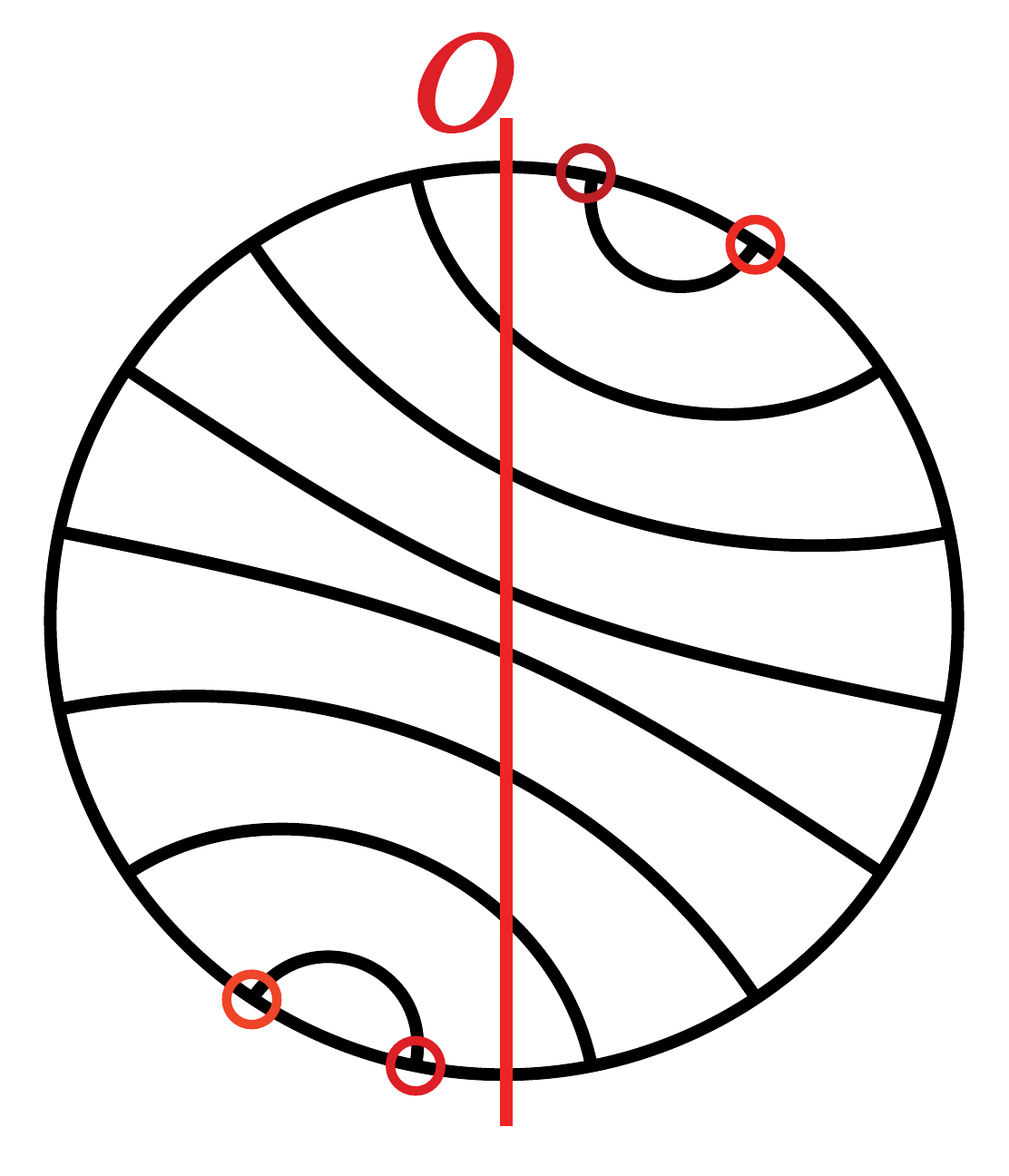}
                \caption{Post arc surgery}
                \label{fig:afterman}
        \end{subfigure}
        ~ 
        \caption{These two splits are one arc surgery apart, but 7 split moves apart.}\label{fig:foreman}
\end{figure}

\begin{definition} Let $O'$ be an overstrand on a spilt $T$. We call $r_i$ a \textit{rotation of $O'$} obtained by rotating $O'$ clockwise past $i$ endpoints of the split. The rotation of $O'$ in the counterclockwise direction will be denoted $\overline{r_i}$.  We note that the set  $\{\overline{r_{\lfloor\frac{n}{2}\rfloor}}, \overline{r_{\lfloor\frac{n}{2}\rfloor - 1}}, \overline{ r_{\lfloor\frac{n}{2}\rfloor - 2}} \ldots r_0 \ldots r_{\lceil\frac{n}{2}\rceil-1}, r_{\lceil\frac{n}{2}\rceil}\}$ contains all the unique rotations of $O'$. When $n$ is even, $\overline{r_{\left \lfloor \frac{n}{2} \right \rfloor}} = r_{\left\lceil \frac{n}{2} \right \rceil}$ and when $n$ is odd, all elements in the aforementioned set are distinct. 

 When $r_i \in \{ r_0, r_1, ... , r_{\lceil \frac{n}{2} \rceil}\}$, we say that $r_i$ is \textit{clockwise} of $O'$, and when $\overline{r_i} \in \{\overline{r_{\lfloor\frac{n}{2}\rfloor}},...,\overline{r_1} \}$, we say that $\overline{r_i}$ is \textit{counterclockwise} of $O'$. To make notation clearer, we will denote $\overline{r_{\lfloor\frac{n}{2}\rfloor}}$ as $\overline{r_{\frac{n}{2}}}$ and ${r_{\lceil\frac{n}{2}\rceil}}$ as  ${r_{\frac{n}{2}}}$.
\end{definition}

 \begin{definition}\label{cdl}Let $j$ be an integer such that $1 \leq j \leq\lceil \frac{n}{2}\rceil$. The following definitions describe the change in the number of intersections with $T$  as we rotate $O'$ clockwise from $r_{j-1}$ to $r_j$. We call a rotation from  $r_{j-1}$ to $r_j$ \textit{increasing} if the number of intersections with arcs in $T$ increases. We define a \textit{decreasing} rotation analogously. A rotation is \textit{level} if the number of intersections does not change. \\
 \begin{itemize}
\item   Let $c$ denote the number of increasing rotations, i.e., the number of distinct $j$ for which\\ $I(r_j,T) - I(r_{j-1},T) = 2$.
  \item  Let $l$ denote the number of level rotations, i.e., the number of distinct $j$ for which\\ $I(r_j,T) - I(r_{j-1},T) = 0$. 
   \item Let $d$ denote the number of decreasing rotations, i.e., the number of distinct $j$ for which \\ $I(r_j,T) - I(r_{j-1},T) = -2$. 
\end{itemize}
   
Define $\overline{c}$, $\overline{l}$, $\overline{d}$ analogously. These values represent  how the number of intersections with $T$ changes as we rotate the position of $O'$ counterclockwise from $\overline{r_{j-1}}$ to $\overline{r_{j}}$.

 \end{definition} 
 
We are interested in rotations of $O'$ because we will later enumerate the possible positions of $O$ as rotations of $O'$. For the next lemma, remember that $T'$ is a term in the $(n+1)$-skein relation obtained by resolving all intersections of $O$ with $T$ in some manner.
 
 \begin{lemma} \label{first} 
 Given a term T and two overstrands $O$ and $O'$, \[|I(O',T')|  \leq |I(O',T) |+ 1,\] with equality if and only if exactly one of the following holds. 


 \begin{enumerate} 
 \item $O$ does not intersect any arcs in the split diagram of $T$  
 \item If $I(O,O',T)$ is nonempty, $O$ is counterclockwise of $O'$, and all intersections of $O$ with arcs in $I(O,O',T)$ are resolved as A-splits. 
 \item If $I(O,O',T)$ is nonempty, $O$ is clockwise of $O'$, and all intersections of $O$ with arcs in $I(O,O',T)$ are resolved as B-splits.
 \end{enumerate} 

\end{lemma}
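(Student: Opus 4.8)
The plan is to isolate the part of $T$ that actually changes. I would write the arcs of $T$ as the disjoint union of the \emph{untouched} arcs $U$ (those $O$ does not meet) and the \emph{cut} arcs $C=I(O,T)$. The untouched arcs appear verbatim in $T'$, and $O'$ meets them in exactly the same way, so they contribute equally to $|I(O',T)|$ and $|I(O',T')|$ and may be set aside. After resolving, the $|C|$ cut arcs together with the single strand $O$ are reconnected into a new family $N'$ of $|C|+1$ arcs, and $T'=U\cup N'$. Thus everything reduces to comparing the number of arcs of $N'$ met by $O'$ with $T^{*}=|I(O,O',T)|$, since the cut arcs that $O'$ meets are precisely those crossed by both $O$ and $O'$.

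For the inequality itself, the key point is that resolving a crossing of $O$ with a cut arc is supported in a small disk about that crossing. If $O'$ avoids these finitely many disks (which we may arrange after a small perturbation, and which is forced once we note that all overstrands pass through the single crossing point), then the number of transverse intersection points of $O'$ with the diagram is left unchanged by the resolution. Hence $O'$ meets $N'$ in exactly as many points as it meets $C\cup O$: one point on each of the $T^{*}$ cut arcs that $O'$ crosses, plus the single point $O'\cap O$, for a total of $T^{*}+1$. Since the number of \emph{arcs} of $N'$ that $O'$ meets is at most the number of intersection \emph{points}, we obtain (new arcs met) $\le T^{*}+1$, and restoring the untouched arcs gives $|I(O',T')|\le|I(O',T)|+1$.

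Equality holds exactly when those $T^{*}+1$ points lie on $T^{*}+1$ \emph{distinct} arcs of $N'$, i.e. when $O'$ meets no new arc twice, and this is where the real content lies. To analyze it I set up the boundary combinatorics: because $O$ and $O'$ cross, their endpoints alternate on the circle as $o_1,o_1',o_2,o_2'$, cutting it into four quadrant-arcs $Q_1,\dots,Q_4$; write $\alpha,\beta$ for the two sides of $O'$. Each cut arc joins the $o_1'$-side of $O$ to the $o_2'$-side, and it additionally crosses $O'$ iff its endpoints lie in diagonally opposite quadrants; these diagonal cut arcs are exactly the $T^{*}$ arcs of $I(O,O',T)$, while the remaining cut arcs are internal to a single side of $O'$. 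In these terms equality is equivalent to the resolved matching $N'$ using the minimum possible number of arcs internal to each side, since every extra internal arc lowers the count of arcs crossing $O'$ by two. The degenerate case $I(O,O',T)=\varnothing$ — in particular condition~(1), where $O$ meets no arc of $T$ — is then immediate, because $O'$ meets only the single new arc carrying the point $O'\cap O$. For the nonempty case I would order the crossings of $O$ with the cut arcs along $O$ and view the resolution as a sequence of elementary smoothings producing a noncrossing matching, in the manner of a product of Temperley--Lieb generators; I would then show that there is one coherent choice — all A-splits when $O$ is counterclockwise of $O'$, all B-splits when $O$ is clockwise — that threads each diagonal cut arc straight across $O'$ and keeps each adjacent cut arc internal, attaining the minimum, and that any single ``wrong'' split among the diagonal crossings joins two same-side endpoints and so drops the count. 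The flip between A and B as $O$ moves from counterclockwise to clockwise of $O'$ is exactly the reflection exchanging $\alpha$ and $\beta$, which I would record precisely through the rotation $r_i$ of $O'$ determined by the position of $O$.

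The main obstacle is precisely this last step: tracking, through the entire smoothing sequence and in the presence of the internal (non-crossing) cut arcs, exactly when two of the $T^{*}+1$ distinguished intersection points are merged onto a common arc, and verifying that no merge occurs under the coherent resolution while at least one merge occurs under every other resolution of the diagonal crossings. The unavoidable degeneracy that all overstrands meet at the single crossing point — so that the relevant left/right and clockwise/counterclockwise data must be read off from infinitesimal rotations rather than from a single generic picture — is what makes the bookkeeping delicate, and is exactly what the rotation framework ($r_i$, clockwise and counterclockwise) was introduced beforehand to control.
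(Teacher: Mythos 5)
Your opening moves are sound, and in places tighter than the paper's own write-up: the decomposition of $T$ into untouched arcs and cut arcs, the observation that the resolution only alters the diagram inside small disks that $O'$ avoids, the resulting point count ($T^*$ points on the diagonal cut arcs plus the single point $O'\cap O$), and the reduction of the equality question to whether those $T^*+1$ points land on $T^*+1$ pairwise distinct arcs of $N'$ are all correct. Your remark about the degenerate case is also correct, and in fact sharper than the lemma as stated: your argument shows equality holds whenever $I(O,O',T)=\varnothing$, even if $I(O,T)\neq\varnothing$, a configuration in which none of conditions (1)--(3) holds; the paper's ``exactly one of the following'' phrasing glosses over this. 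But the biconditional's actual content -- that when $I(O,O',T)\neq\varnothing$ the homogeneous resolution (all A-splits on the diagonal crossings when $O$ is counterclockwise of $O'$, all B-splits when clockwise) is the \emph{unique} way to keep the $T^*+1$ points on distinct arcs -- is exactly the step you defer, announce with ``I would then show,'' and finally label ``the main obstacle.'' You give no argument in either direction: neither that the coherent choice produces no merge of marked points, nor that any deviant split at a diagonal crossing forces one. Since the lemma \emph{is} this uniqueness statement, the proposal as written does not prove it.

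For comparison, the paper closes precisely this step with a two-stage reduction that also dissolves the bookkeeping problem you identify (the interference of the internal, non-$O'$-crossing cut arcs with the smoothing sequence). First, all crossings of $O$ with arcs of $I(O,T)\setminus I(O',T)$ are resolved, in any manner; this produces an intermediate strand $E$ with one endpoint on each side of $O'$ which still crosses exactly the arcs of $I(O,O',T)$, and it changes no intersection counts with $O'$. Only the $T^*$ crossings of $E$ with $I(O,O',T)$ then remain. Resolving $k$ crossings with an overstrand re-pairs $2(k+1)$ endpoints, here $T^*+1$ on each side of $O'$; to realize equality, every new arc must join an endpoint on one side of $O'$ to an endpoint on the other, and there is exactly one non-crossing matching of these $2(T^*+1)$ endpoints with that property -- the one produced by the homogeneous all-A (resp.\ all-B) resolution. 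Any other resolution necessarily joins two endpoints on the same side of $O'$, which costs at least two intersections and gives $|I(O',T')|\leq |I(O',T)|-1$. Some version of this pairing count (or your Temperley--Lieb bookkeeping, actually carried out) is what your proof still needs.
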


\begin{proof} 
We prove that if any of the three conditions hold then $|I(O',T')|  = |I(O',T)| + 1$. We then show how any other way of resolving the intersections of $O$ and $T$ creates strictly fewer intersections between $O'$ and $T'$. 

\medskip 

\noindent \textbf{Condition 1:} Assume $O$ does not intersect any arcs in $T$. In this case, resolving the intersections with $O$ does not affect the number of arcs that intersect $O'$. Because both $O$ and $O'$ bisect the split diagram, $O'$ will also intersect $O.$ Therefore $|I(O',T')|  = |I(O',T)| + 1.$ Note that $n$ must be even for this to occur.

\medskip 

\noindent \textbf{Condition 2:} We will ignore arcs that $O'$ intersects and $O$ does not intersect, i.e. $I(O',T) - I(O,T)$ Select east because resolving the intersections of $O$ with $T$ will not change the fact that these arcs intersect $O'$. Assume $I(O,O',T)$ is nonempty and $O$ is counterclockwise of $O'$. Resolve the intersections of $O$ with $I(O,T) - I(O',T)$. Note that we are not yet resolving the intersections of $O$ with $I(O,O',T)$. This operation will create a strand, $E$, that has one endpoint on each side of $O'$ and intersects exactly the arcs $I(O,O',T)$. At this point, the number of intersections with $O'$ has not changed because we have only resolved the intersections of $O$ with arcs that $O'$ does not intersect. We now consider the crossings of $E$ with $I(O,O',T)$, since $O$ has ``become" $E$. Resolving $k$ crossings with any overstrand requires the re-pairing of $2(k+1)$ endpoints. These are the $k$ endpoints on each side of the overstrand, plus the two endpoints of the overstrand. Here we are resolving $T^*$ crossings with $E$ and changing arcs extending from $T^*+1$ endpoints on each side of $O'$. One endpoint on each side comes from $E$; the others were endpoints in $I(O,O',T)$. Because this is a split diagram, these arcs may intersect $O'$, but not each other.  There is one and only one way to create $|I(O',T)| + 1$ intersections with $O'$. See Figure \ref{fig:lemma1}. This unique splitting is the ``homogeneous'' splitting composed of all A-splits, because we want to connect all the ``A-regions'' in the pre-resolved diagram in order to maximize intersections with $O'$. Any other splitting of the intersections of $E$ with arcs in $I(O,O',T)$ creates at least one arc between two of the $T^*+1$ endpoints on the same side of $O'$. This means there are at most $|I(O',T)| - 1$ intersections between $O'$ and $T'$.  

\medskip 

\noindent \textbf{Condition 3:} 
This follows similarly to the proof of Condition 2. 

\end{proof}

 \begin{figure}[H]
        \centering
        \begin{subfigure}[b]{0.15\textwidth}
                \centering
                \includegraphics[width=\textwidth]{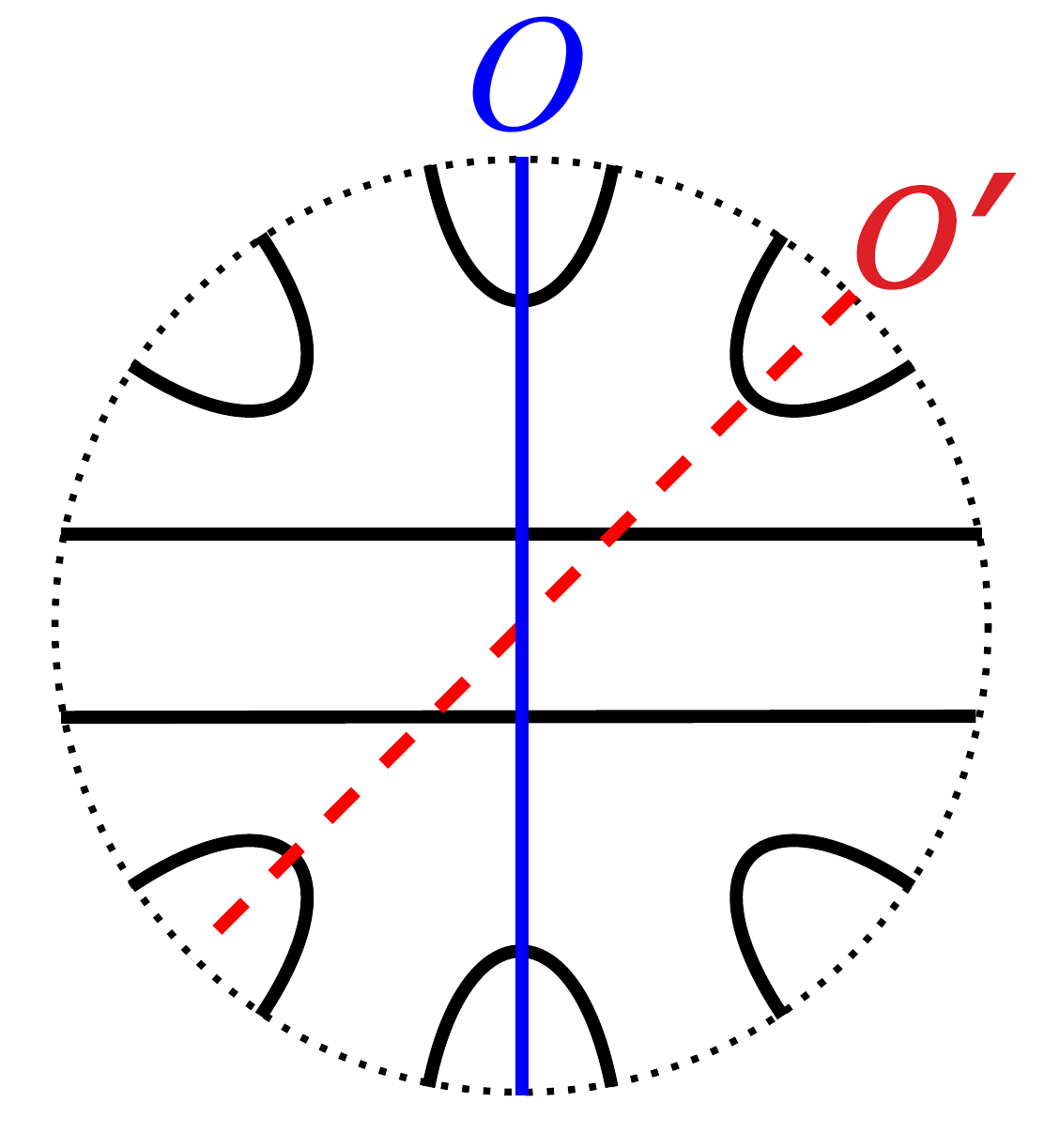}
                \caption{ }
                \label{fig:pre}
        \end{subfigure}%
        ~ 
        \begin{subfigure}[b]{0.15\textwidth}
                \centering
                \includegraphics[width=\textwidth]{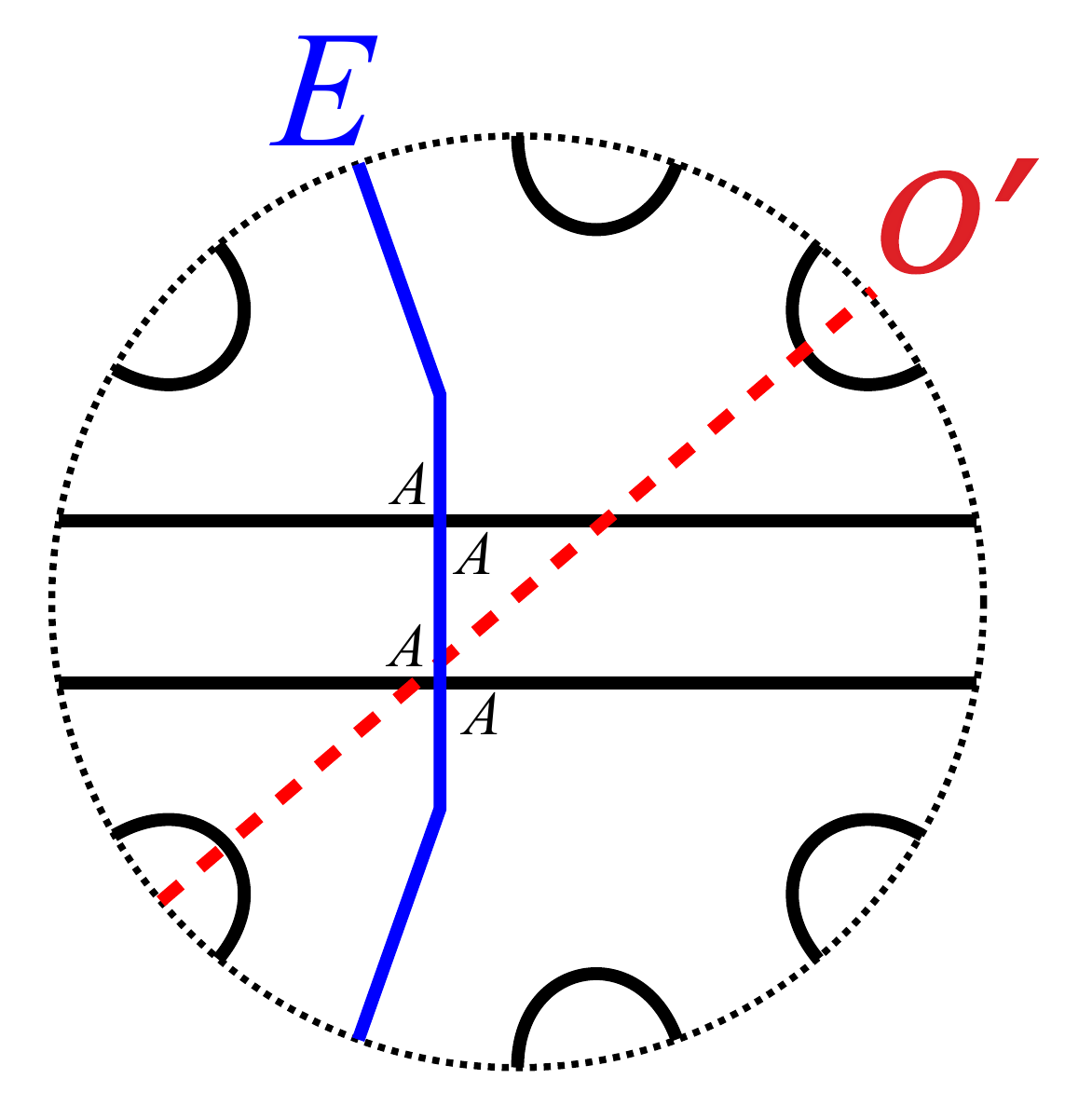}
                \caption{ }
                \label{fig:mid}
        \end{subfigure}
        ~ 
           \begin{subfigure}[b]{0.15\textwidth}
                \centering
                \includegraphics[width=\textwidth]{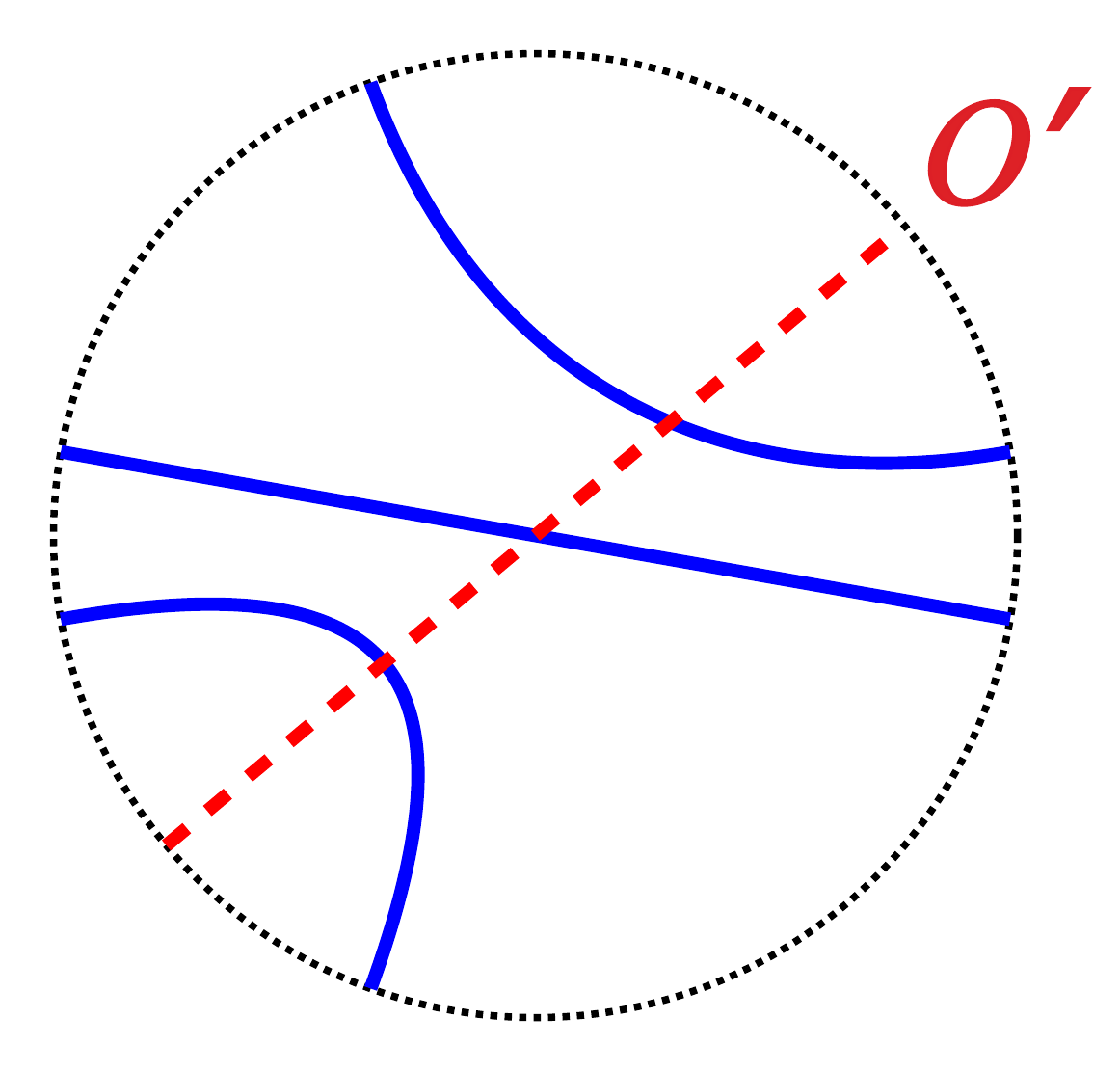}
                \caption{ }
                \label{fig:post}
        \end{subfigure}
        \begin{subfigure}[b]{0.15\textwidth}
                \centering
                \includegraphics[width=\textwidth]{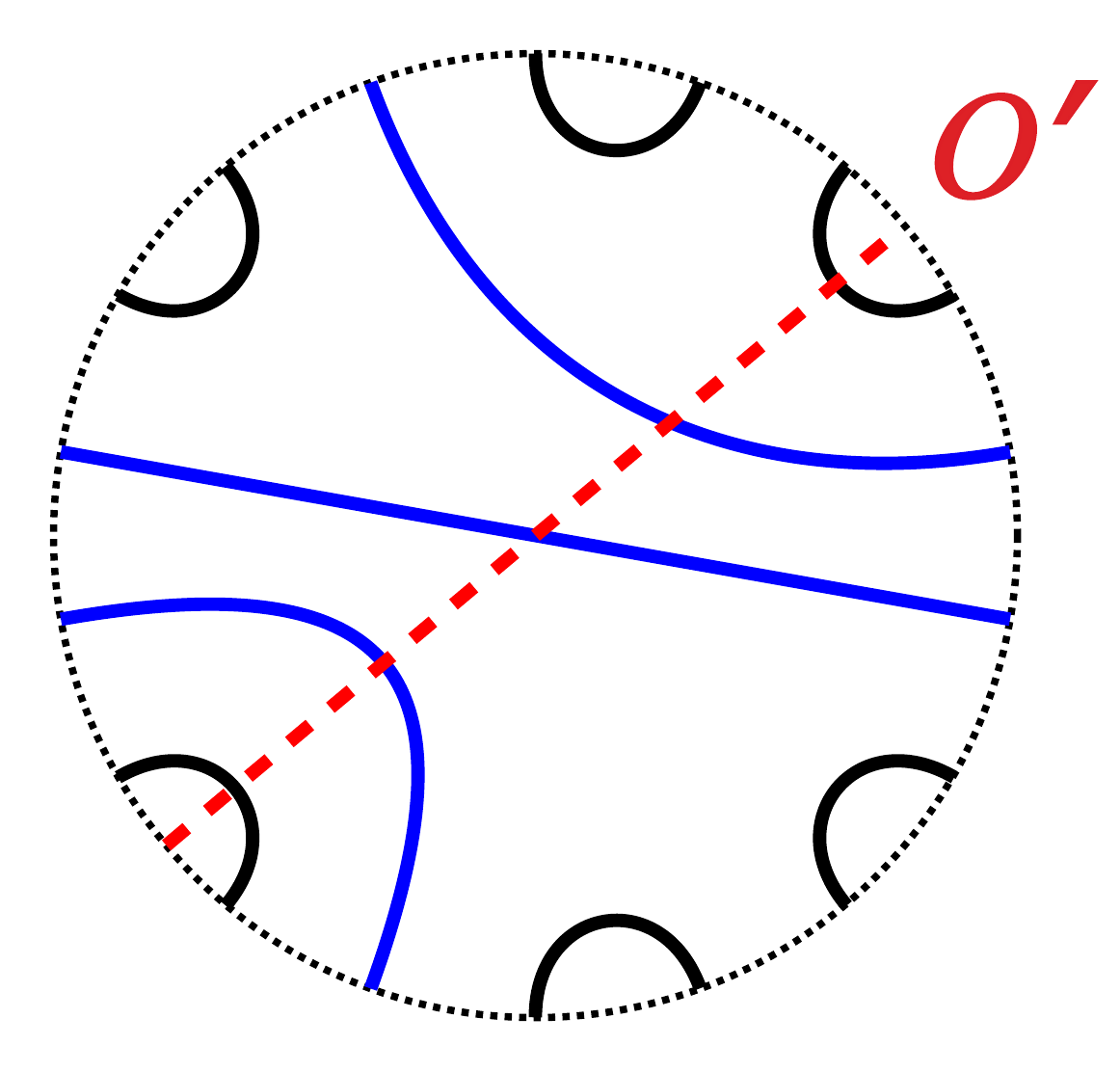}
                \caption{ }
                \label{fig:bigpost}
        \end{subfigure}
        \caption{ (a) The split diagram with $O$ and $O'$ before any crossings have been resolved. (b) Resolve the intersections of $O$ with $I(O,O',T)$ to create $E$ (c) The all $A$ split resolution of $E$  maximizes value of $|I(O',T')|$ by pairing all $2(T^*+1)$ endpoints so all arcs intersect $O'$. (d) This resolution in context of the entire split diagram.}\label{fig:lemma1}
\end{figure}

\begin{lemma}\label{before} Given a split $T_1$ in an $n$-skein relation and an overstrand $O$ such that\\ $|I(O, T_1)|=n-2m$ for any integer $m$ satisfying $1\leq m\leq \frac{n}{2}$, there exists a split $T_0$ in the $n$-skein relation such that one arc surgery performed on $T_0$ yields $T_1$ and $|I(O, T_0)|=n-2(m-1)=n-2m+2$.
\end{lemma}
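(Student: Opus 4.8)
The plan is to construct $T_0$ explicitly by running the arc-surgery procedure ``in reverse'' on $T_1$, and then to verify that the forward arc surgery on this $T_0$ returns $T_1$.

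First I would record the combinatorial picture. Since $O$ bisects the crossing, the $2n$ arc-endpoints of $T_1$ are divided by $O$ into $n$ endpoints on the west and $n$ on the east, and an arc is separated (crosses $O$) exactly when it joins a west endpoint to an east endpoint. The hypothesis $|I(O,T_1)| = n-2m$ with $m \ge 1$ therefore says that $n-2m$ arcs cross $O$, while the remaining $2m$ west endpoints and $2m$ east endpoints are paired into $m$ west-only arcs and $m$ east-only arcs; in particular at least one of each exists. I would then use that $T_1$ is planar: no endpoint of a crossing arc can be nested inside a west-only arc, since such a crossing arc would have to exit toward $O$ and cross the west-only arc. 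Hence an \emph{innermost} west-only arc encloses no endpoints at all and must join two circle-adjacent west endpoints; call it $a_W$ with endpoints $w,w'$, and symmetrically choose an innermost east-only arc $a_E$ with adjacent endpoints $e,e'$.

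Next I would define $T_0$. Delete from $T_1$ the arcs $a_W$, $a_E$, and all $n-2m$ crossing arcs, leaving only the remaining $m-1$ west-only and $m-1$ east-only arcs untouched. This frees $n-2m+2$ endpoints on each side, and because $a_W$ and $a_E$ were innermost these freed endpoints are precisely the ones not shielded from $O$ by a retained arc. I would join them by the unique non-crossing matching in which every new arc crosses $O$, exactly as in step (3) of the arc-surgery definition. This yields a valid planar split $T_0$ with $|I(O,T_0)| = n-2m+2$, as required.

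Finally I would check that a single arc surgery on $T_0$, with west pair $\{w,w'\}$ and east pair $\{e,e'\}$, returns $T_1$. Conditions (a) and (c) are immediate, since $w,w'$ lie west, $e,e'$ lie east, and in $T_0$ all four are endpoints of the newly added crossing arcs and so are separated. For condition (b) I would note that $w,w'$ are circle-adjacent and are endpoints of adjacent crossing arcs of $T_0$, so the region bounded by those two arcs together with the boundary segment between $w$ and $w'$ carries both on its boundary, and likewise for $e,e'$. The surgery then deletes the $n-2m+2$ crossing arcs, reconnects $\{w,w'\}$ into $a_W$ and $\{e,e'\}$ into $a_E$, and refills the remaining $2(n-2m)$ endpoints with the unique non-crossing family of crossing arcs, which is exactly the crossing family of $T_1$; this reconstructs $T_1$.

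I expect the main obstacle to be the two planarity/uniqueness claims that both hinge on the innermost choice: that after deleting $a_W$, $a_E$, and the crossing arcs every freed endpoint genuinely faces $O$, so the refilling matching exists, is unique, and avoids the retained arcs; and, correspondingly, that condition (b) holds in $T_0$. Both reduce to the observation that an innermost same-side arc joins adjacent endpoints and encloses nothing, so removing it opens its endpoints toward $O$ without disturbing the nesting of the retained arcs. Some care is also needed to confirm that the non-crossing refill used to build $T_0$ coincides with the refill produced by step (3) of the arc surgery, which is precisely what makes the two operations genuine inverses.
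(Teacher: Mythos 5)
Your overall strategy --- build $T_0$ by an explicit ``inverse arc surgery'' and then check that the forward surgery returns $T_1$ --- is the same as the paper's, but your choice of which same-side arcs to delete is the opposite of the correct one, and this breaks the construction. You delete an \emph{innermost} west-only arc $a_W$ (one enclosing no endpoints), and you justify the refilling step by claiming that ``because $a_W$ and $a_E$ were innermost these freed endpoints are precisely the ones not shielded from $O$ by a retained arc.'' That claim is false: being innermost (enclosing nothing) does not prevent $a_W$ from being nested \emph{inside} another west-only arc, which is then retained and shields the freed endpoints of $a_W$ from $O$. Concretely, take $n=4$, $m=2$ (or $n=6$, $m=2$ with two crossing arcs added at the bottom): label the west endpoints $1,2,3,4$ from top to bottom, let $T_1$ contain the nested west-only arcs $(1,4)$ and $(2,3)$, and similarly $(1',4')$ and $(2',3')$ on the east. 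Your rule picks $a_W=(2,3)$ and retains $(1,4)$; after deletion the freed endpoints $2,3$ lie inside the retained arc $(1,4)$, so no arc from $2$ or $3$ can reach across $O$ without crossing $(1,4)$, and the non-crossing refill you need (every new arc crossing $O$) simply does not exist. The same defect propagates into your verification of condition (b), which leans on $w,w'$ being circle-adjacent --- a property only your (incorrect) innermost choice guarantees.

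The paper makes the opposite choice: it deletes a same-side arc with ``no arcs between this arc and $O$,'' i.e.\ an \emph{outermost} (unshielded) west-only arc, one not nested inside any other west-only arc. With that choice the freed endpoints genuinely face $O$: the only same-side arcs near them are ones nested \emph{inside} the deleted arc, and those hug the circle and do not separate its endpoints from $O$. The refill then exists, and the forward surgery, taking the deleted arcs' endpoints as the west and east pairs, reproduces $T_1$; condition (b) still holds even though the pair need not be adjacent, since the region between the two new crossing arcs at $w$ and $w'$ (whose boundary may include retained nested arcs) is still a single region containing both endpoints. So your argument is repairable by swapping ``innermost'' for ``outermost'' and redoing the condition-(b) check accordingly, but as written the key geometric claim is false and the construction fails on any split with nested same-side arcs.
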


\[
\xymatrix{\ar @{} [dr] |{}
T_0 \ar[d]_{}^{\text{1 arc surgery}} & |I(O,T_0)| = n-2m+2 \\ 
T_1  & |I(O,T_1)| = n-2m	}
\]

\begin{proof} We present a method of constructing such a split $T_0$. We can think of this as inverse arc surgery (see Figure \ref{fig:lemma2}).  Again assume that $O$ is vertical. First, delete an arc of $T_1$ contained completely on the east (right-hand) side of $O$ such that there are no arcs between this arc and $O$. Delete another arc of $T_1$ contained completely on the left-hand side of $O$ such that there are no arcs between this arc and $O$. Next, delete all other arcs that cross $O$. Reconnect all unpaired endpoints so that each of the new arcs intersects $O$, but none of the other newly-created arcs. This is $T_0$.  Note that we can perform one arc surgery on $T_0$ to obtain $T_1$. Choose the endpoints corresponding to the arcs previously deleted from $T_1$ as the east pair and the west pair. Performing the arc surgery with this choice of east pair and west pair creates $T_1$. Furthermore, recall that a single arc surgery decreases the number of intersections with $O$ by 2. This gives $|I(O,T_0)|=n-2m+2$. If there are multiple arcs that satisfy the conditions for deletion in the first step, there will be many possible splits $T_0$, all of which are one arc surgery away from $T_1$. 

 \begin{figure}[H]
        \centering
        \begin{subfigure}[b]{0.15\textwidth}
                \centering
                \includegraphics[width=\textwidth]{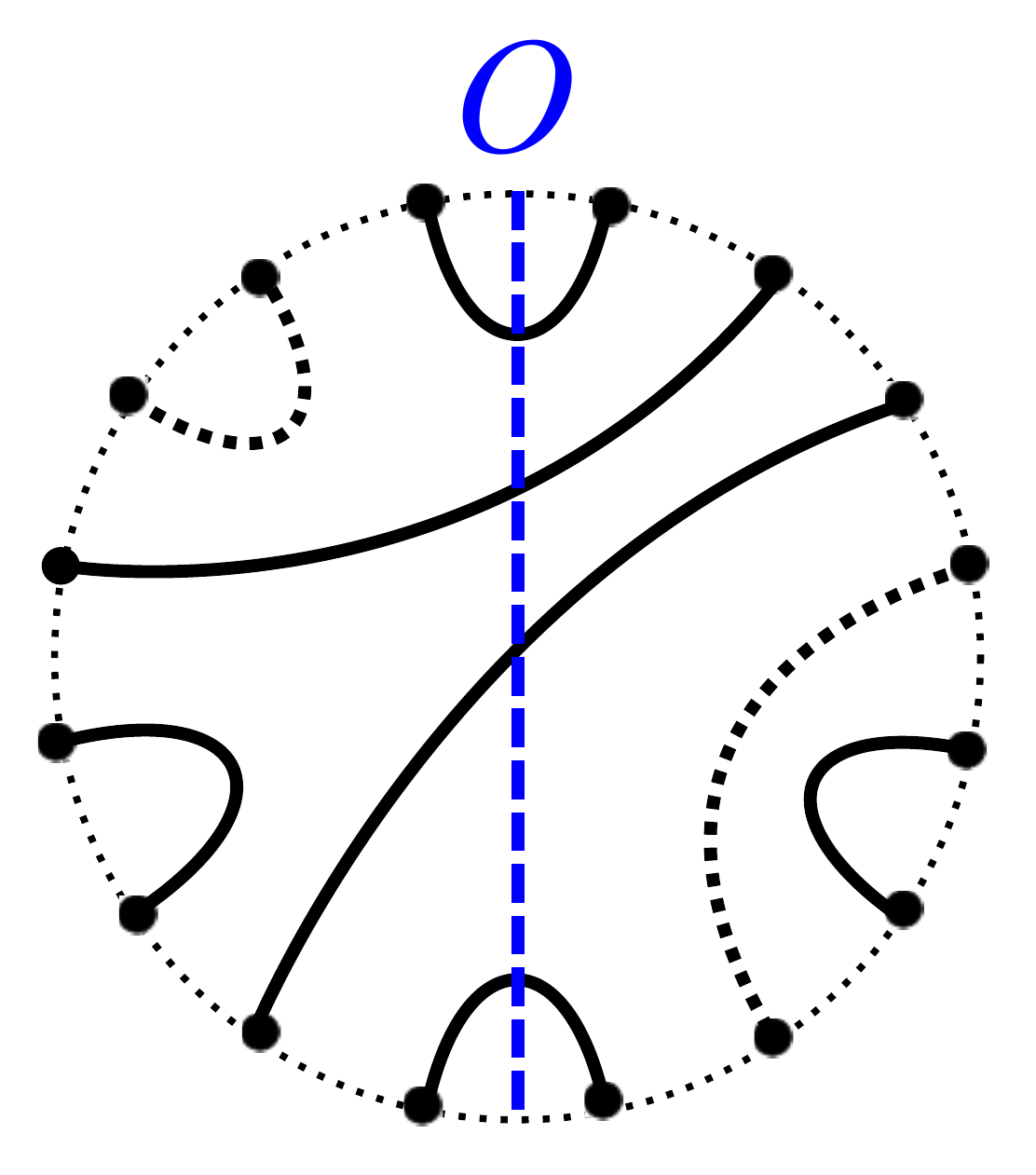}
                \caption{ }
                \label{fig:T}
        \end{subfigure}%
        ~ 
        \begin{subfigure}[b]{0.15\textwidth}
                \centering
                \includegraphics[width=\textwidth]{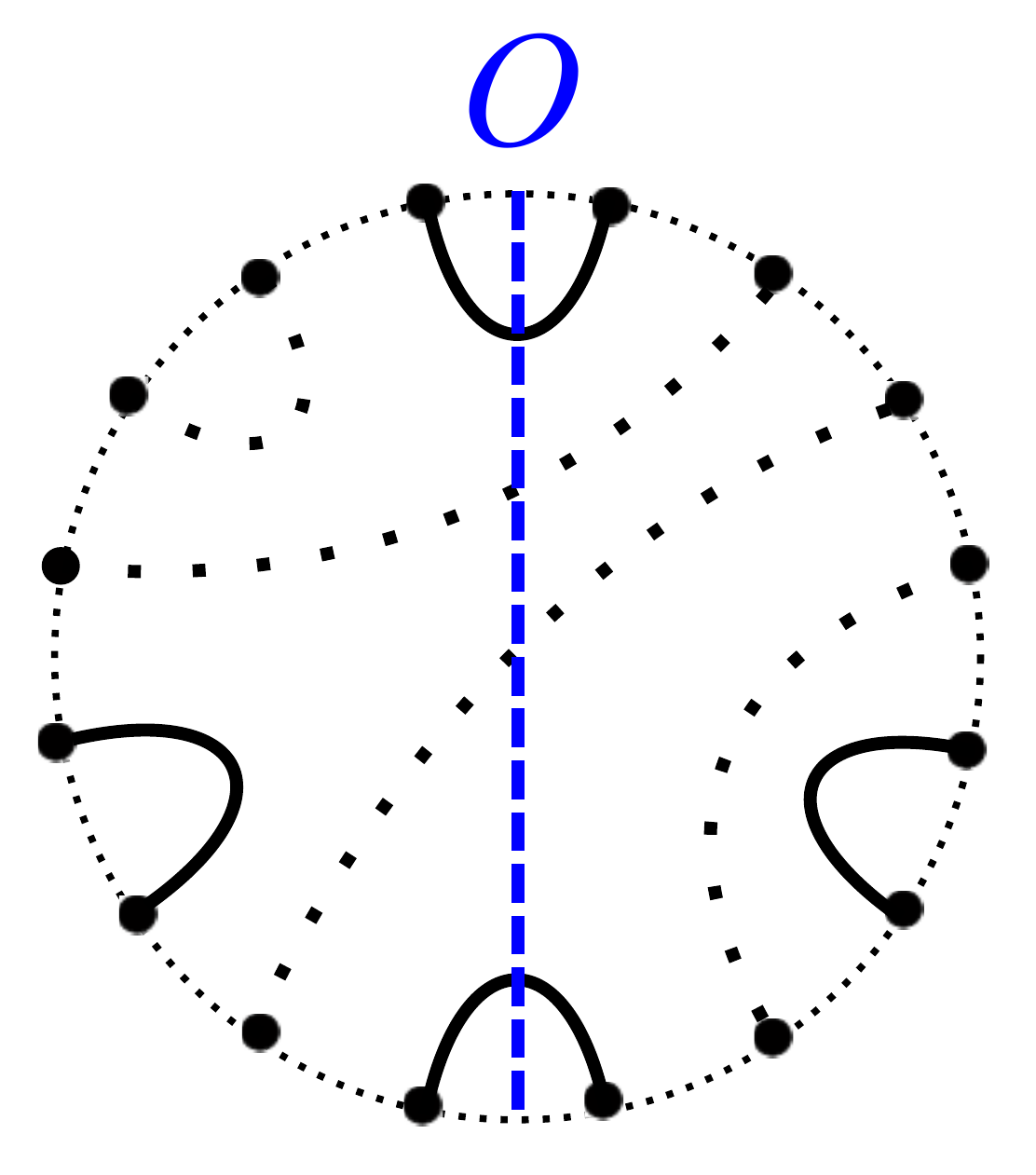}
                \caption{ }
                \label{fig:transition}
        \end{subfigure}
        ~ 
        \begin{subfigure}[b]{0.15\textwidth}
                \centering
                \includegraphics[width=\textwidth]{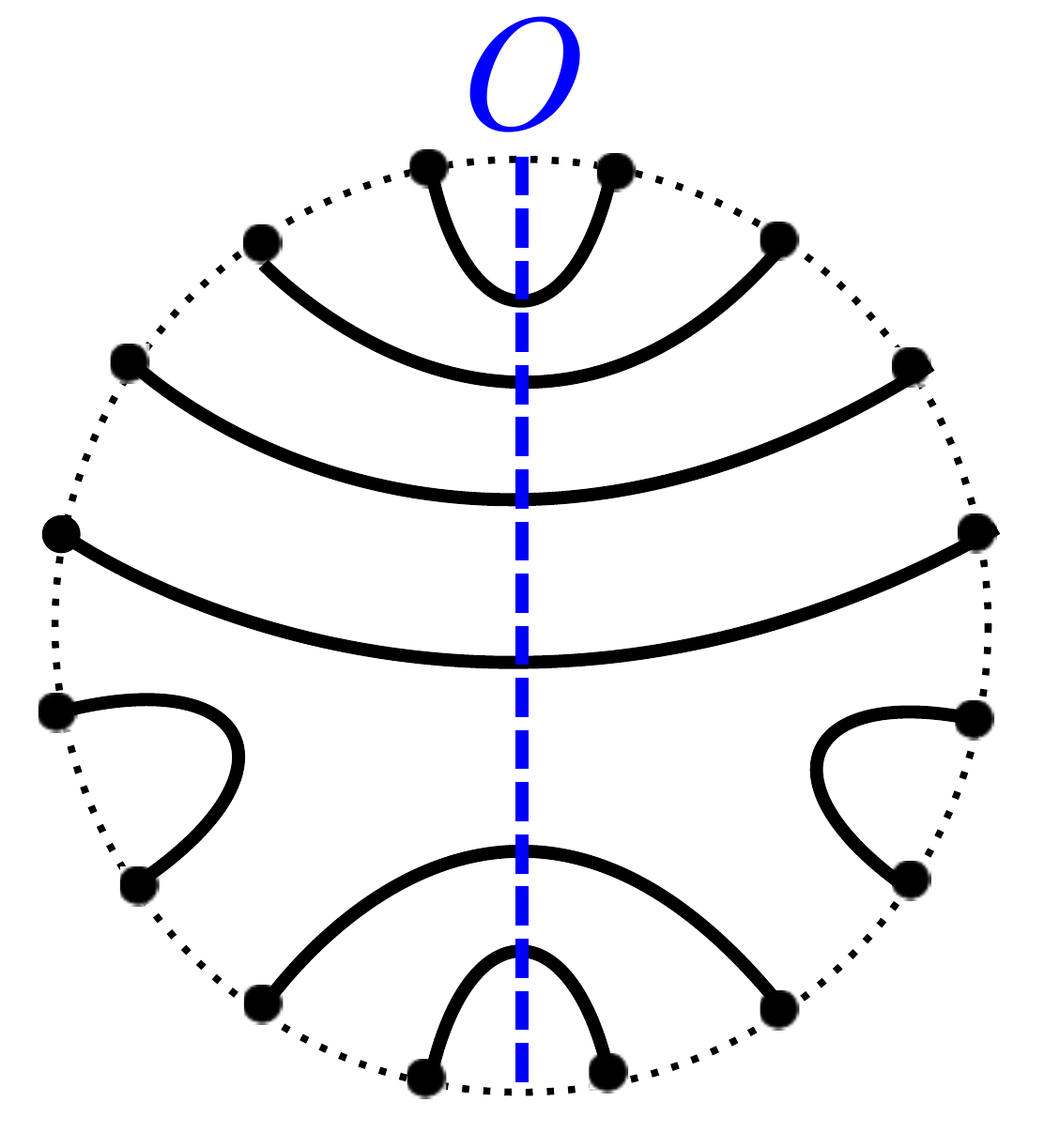}
                \caption{ }
                \label{fig:$T_0$}
        \end{subfigure}
        \caption{(a) Select east and west arcs of $T_1$ for `inverse' arc surgery.  (b) The process of `inverse' arc surgery. (c) One possible split $T_0$.}\label{fig:lemma2}
\end{figure}

\end{proof}

Recall the definitions of $c, l, \overline{c} $ and $\overline{l}$.

 \begin{lemma} \label{rotations}
  Let $T$ be a split in an $n$-skein relation and let $m$ be an integer such that $0\leq m\leq \frac{n}{2}$.   If $|I(O,T)| = n - 2m$, then $2c + l \leq 2m$ and $2\overline{c} +  \overline{l} \leq 2m$ . 
 \end{lemma}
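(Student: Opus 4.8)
The plan is to reduce everything to the behavior of the integer sequence $f(j) = |I(r_j,T)|$ for $j = 0,1,\dots,\lceil n/2\rceil$, where $r_0$ records the position of the overstrand so that $f(0) = |I(O,T)| = n-2m$. The starting observation is local: under a single clockwise rotation $r_{j-1}\to r_j$, each of the two ends of the overstrand sweeps past one endpoint of the split, and each such passage toggles exactly one arc of $T$ between crossing and non-crossing, so the change is even. This is precisely the trichotomy of Definition \ref{cdl}: $f(j)-f(j-1)\in\{-2,0,2\}$ and $c+l+d = \lceil n/2\rceil$. Recording each step's contribution to $2c+l$ as $1+\tfrac12\bigl(f(j)-f(j-1)\bigr)$, which equals $2,1,0$ for an increasing, level, or decreasing rotation respectively, telescopes to the identity
\[
2c+l \;=\; \left\lceil \tfrac n2 \right\rceil + \tfrac12\bigl(f(\lceil n/2\rceil)-f(0)\bigr).
\]

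With this identity, substituting $f(0)=n-2m$ turns the target inequality $2c+l\le 2m$ into a purely geometric statement: a bound on $f(0)+f(\lceil n/2\rceil)$, i.e. on the total number of arcs of the non-crossing matching $T$ that meet the two overstrand positions $r_0$ and $r_{\lceil n/2\rceil}$ simultaneously. I would therefore aim to control $f(0)+f(\lceil n/2\rceil)$ directly. The two diameters cut the boundary circle into four arcs, and every chord of $T$ falls into one of three classes: it lies inside a single arc and crosses neither diameter, it joins adjacent arcs and crosses exactly one, or it joins opposite arcs and crosses both.

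The crux is then a planarity count on the ``doubly-crossing'' (opposite-arc) chords. Because $T$ is non-crossing, a chord joining one pair of opposite regions forbids any chord joining the complementary pair, and the opposite-arc chords that do occur must nest and thereby trap endpoints that are then forced to be matched within a single region. The main work I expect is to convert this into a quantitative bound, e.g. by constructing an injection from the doubly-crossing chords into the chords lying entirely inside one region, which would cap the number of doubly-crossing chords and deliver the bound on $f(0)+f(\lceil n/2\rceil)$. The counterclockwise statement $2\overline c+\overline l\le 2m$ should follow verbatim, applied to the chain $r_0,\overline{r_1},\dots,\overline{r_{\lfloor n/2\rfloor}}$.

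The step I expect to be the genuine obstacle is controlling the \emph{level} rotations. The increasing rotations are already limited by the headroom $2m$ between $f(0)=n-2m$ and the maximum possible value $n$ (since $f(\lceil n/2\rceil)\le n$ forces $c-d\le m$), so if only increases mattered the bound would be immediate; it is the level rotations that require the finer planar structure of $T$, which is exactly what the doubly-crossing-chord count is designed to capture. A cleaner alternative I would also pursue is induction on $m$ via the inverse arc surgery of Lemma \ref{before}: from $T$ with $|I(O,T)|=n-2m$ one obtains a split $T_0$ with $|I(O,T_0)|=n-2m+2$ a single arc surgery away, and one tracks how that surgery perturbs the profile $f$ and hence $c$ and $l$; here the delicate part is the base case and the bookkeeping of how a single arc surgery changes the number of level steps.
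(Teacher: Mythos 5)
Your telescoping identity is correct, and the reduction it gives is exact: writing $f(j)=|I(r_j,T)|$, one gets $2c+l=\lceil n/2\rceil+\tfrac12\bigl(f(\lceil n/2\rceil)-f(0)\bigr)$, so the clockwise half of the lemma is \emph{equivalent} to the bound $f(0)+f(\lceil n/2\rceil)\le 2\lfloor n/2\rfloor$, i.e.\ to the statement that the chords of $T$ crossing both diameters are at most as numerous as the chords lying inside a single quadrant arc. The genuine gap is that this statement is never proved: you explicitly defer ``the main work'' of converting the nesting picture into a quantitative bound, and that step is the entire content of the lemma --- everything before it is bookkeeping. Moreover, the mechanism you sketch (nested diagonals ``trap'' endpoints, yielding an injection into single-arc chords) is not by itself a proof, because consecutive nested diagonals can be adjacent and trap nothing between them: on eight points labeled clockwise with two per quadrant, the split pairing $1$ with $6$, $2$ with $5$, $3$ with $4$, and $7$ with $8$ has two nested diagonals with an empty region between them, and the compensating single-arc chords sit in the two end regions (toward SE and toward NW), not ``between'' the diagonals. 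A correct argument has to count globally: with $q=n/2$, $k$ nested NE--SW diagonals, $a,b$ (resp.\ $a',b'$) the NE, SW endpoints in the SE-side (resp.\ NW-side) end region, and $s,s'$ the numbers of SE--SE and NW--NW chords, one shows the single-arc chords number exactly $2(s+s')-k$, that $s\ge(q-a-b)/2$ and $s'\ge(q-a'-b')/2$, that nesting forces $(a+b)+(a'+b')\le 2q-2k$, and then closes the inequality $s+s'\ge k$ after a separate case when $a+b>q$ or $a'+b'>q$. This is all doable (the claim is true for $n$ even), but none of it is in your proposal.

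Two further points. First, the ``cleaner alternative'' you mention --- induction on $m$ via the inverse arc surgery of Lemma \ref{before}, tracking how one surgery perturbs $f$ --- is precisely the paper's proof: the paper shows a single arc surgery changes $|I(r_{\frac{n}{2}},\cdot)|$ by at most $2$ via a four-case analysis of where the east and west pairs sit among the quadrants, then converts this into $2c_1+l_1\le 2c_0+l_0+2$. Naming that route without executing it does not close the gap either. Second, your clockwise endpoint $r_{\lceil n/2\rceil}$ fails for odd $n$: all intersection numbers then have the parity of $n$, so for the parallel split $f(0)=n$ while $f(\lceil n/2\rceil)\ge 1$, giving $f(0)+f(\lceil n/2\rceil)\ge n+1>2\lfloor n/2\rfloor$; the inequality your reduction requires is false, and the clockwise chain must be cut at $\lfloor n/2\rfloor$ for odd $n$. (The paper sidesteps this by assuming $n$ even and asserting the odd case generalizes, so the defect is inherited, but a write-up along your lines cannot claim the remaining cases ``follow verbatim.'') In short: the reformulation is a nice, genuinely different and arguably more transparent route than the paper's induction, but as submitted it is a plan whose central inequality is conjectured rather than proved.
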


 \begin{proof} 
We  prove that the inequality holds for $c$ and $l$. The case for $\overline{c}$ and $\overline{l}$ follows similarly. We proceed by induction on $m$. We increase $m$ by performing arc surgeries. Throughout the course of this argument, we  assume $n$ is even. The arguments presented below generalize to the odd case. Note that for the duration of this proof, the position of $O$ is fixed.\\ 


\textbf{Base Case:} Suppose $|I(O,T)| = n - 2(0)$. If $O$ intersects $n$ arcs of a split $T$ of an $n$-skein relation, then $T$ must be a parallel split and $O$ must be the strand that intersects all arcs of $T$. Each rotation of $O$ clockwise up to the position $r_\frac{n}{2}$ intersects $2$ fewer strands than the previous position.  Therefore, $c= l = 0$ and $2c + l =0 \leq 2(0)$, as desired. \\ 

\textbf{Induction:} Assume $2c + l \leq 2m$ for all $T$ such that $|I(O,T)| = n - 2m$. Let $T_1$ be a new split such that $|I(O,T_1)| = n - 2(m+1)$. Let $c_1$ and $l_1$ denote the quantities $c$ and $l$ for $T_1$. We will show that $2c_1 + l_1 \leq 2(m+1)$. 

By Lemma \ref{before}, for any $T_1$ such that $|I(O,T_1)| = n - 2(m+1)$, there exists a split $T_0$ such that $|I(O,T_0)|=n-2m$ and $T_0$ is one arc surgery away from $T_1$.  Given the splits $T_1$ and $T_0$, draw $O$ vertically and draw a strand at the position $r_{\frac{n}{2}}$.  Note that $O$ and $r_{\frac{n}{2}}$ divide the split into quadrants. Label the quadrants NE, NW, SE, SW according to the intercardinal directions. The quadrants are the same size when $n$ is even because $r_{\frac{n}{2}}$ is orthogonal to $O$ when $n$ is even. In the odd case, the NE and SW quadrants will each contain one more endpoint than the SE and NW quadrants. We call an endpoint in the NE quadrant a NE endpoint and similarly for the other directions. 

We show that $|I(r_\frac{n}{2},T_1)|  \leq |I(r_\frac{n}{2},T_0)|+2$, and then use this statement to complete the inductive step. The intersections of $r_\frac{n}{2}$ with $T_1$ are of two forms: side arcs and diagonal arcs of $T_1$. A side arc connects a NE endpoint to a SE endpoint or a NW endpoint to a SW endpoint. A diagonal arc connects a NE endpoint to a SW endpoint or a NW endpoint to a SE endpoint. Let $SA(T)$ and $DA(T)$  represent the number of side arcs and diagonal arcs in the split $T$ respectively. Then $|I(r_\frac{n}{2},T)| =SA(T)+DA(T)$. 

The number of diagonal arcs depends on the difference in the number of separated endpoints in the NW and NE quadrants. Recall from definition~\ref{separated} that separated endpoints belong to arcs that intersect $O$. Every separated endpoint in the NE quadrant that cannot connect to an endpoint in the NW quadrant must connect to an endpoint in the SW quadrant and form a diagonal arc. Thus, $DA(T)$ is the positive difference in the number of separated endpoints in the in NW and NE quadrants for the split $T$, 
\[DA(T)=|\#\text{ of separated NE endpoints }  -\#\text{ of separated NW endpoints }|.\]
We will examine how one arc surgery affects $SA(T_0)+DA(T_0)$. Note that before surgery, the east and west pair endpoints are separated endpoints. After surgery they are no longer separated endpoints. Recall that arc surgery can only be performed on separated endpoints. Arc surgery can therefore create, but not remove, side arcs because side arcs do not intersect $O$. Without loss of generality, we need only to consider four cases for the location of the east pair and the west pair. 

\medskip

\noindent \textbf{Case 1:} Both west pair endpoints and both east pair endpoints are in the NW and NE quadrants respectively. 

Arc surgery on $T_0$ decreases the number of separated NW endpoints by $2$ and the number of separated NE endpoints by $2$. Thus,  $DA(T_0)=DA(T_1)$. Note that no new side arcs are created in the surgery so $SA(T_0)=SA(T_1)$. We obtain, $SA(T_1)+DA(T_1)=SA(T_0)+DA(T_0)$. 

\medskip

\noindent \textbf{Case 2:}  One endpoint of the west pair is in the NW quadrant and the other is in the SW quadrant. Both endpoints of the east pair are in the NE quadrant.

Arc surgery on $T_0$ decreases the number of separated NW endpoints by $1$ and the number of separated NE endpoints by $2$. Thus,  $DA(T_1) = DA(T_0) \pm 1$. The arc surgery creates one new side arc so $SA(T_1)= SA(T_0) +1$. We obtain $SA(T_1)+DA(T_1)\leq SA(T_0)+DA(T_0)+2$.  

\medskip

\noindent \textbf{Case 3:} One endpoint of the west pair is in the NW quadrant and the other is in the SW quadrant. One endpoint of the east pair is in the NE quadrant and the other is in the SE quadrant.

Arc surgery on $T_0$ decreases the number of separated NW endpoints by $1$ and the number of separated NE endpoints by $1$. Thus,  $DA(T_1)= DA(T_0)$. The arc surgery creates two new side arcs so $SA(T_1)= SA(T_0) +2$. We obtain, $SA(T_1)+DA(T_1)\leq SA(T_0)+DA(T_0)+2$. 

\medskip

\noindent \textbf{Case 4:} Both west pair endpoints and both east pair endpoints are in the NW and SE quadrants respectively. 

Arc surgery on $T_0$ decreases the number of separated NW endpoints by $2$ and does not change the number of separated NE endpoints. Thus, $DA(T_1)= DA(T_0)\pm 2$. The arc surgery does not create new side arcs so $SA(T_1)= SA(T_0) $. We obtain, $SA(T_1)+DA(T_1)\leq SA(T_0)+DA(T_0)+2$. 

\medskip

Recall  $SA(T_0)+DA(T_0)=|I(r_\frac{n}{2},T)|$. Since in each of the four cases, $SA(T_1)+DA(T_1)\leq SA(T_0)+DA(T_0)+2$, we have shown $|I(r_\frac{n}{2},T_1)|  - |I(r_\frac{n}{2},T_0)| \leq2$. We will now use this fact to complete the inductive step.

 \begin{figure}[H]
        \centering
        \begin{subfigure}[b]{0.22\textwidth}
                \centering
                \includegraphics[width=\textwidth]{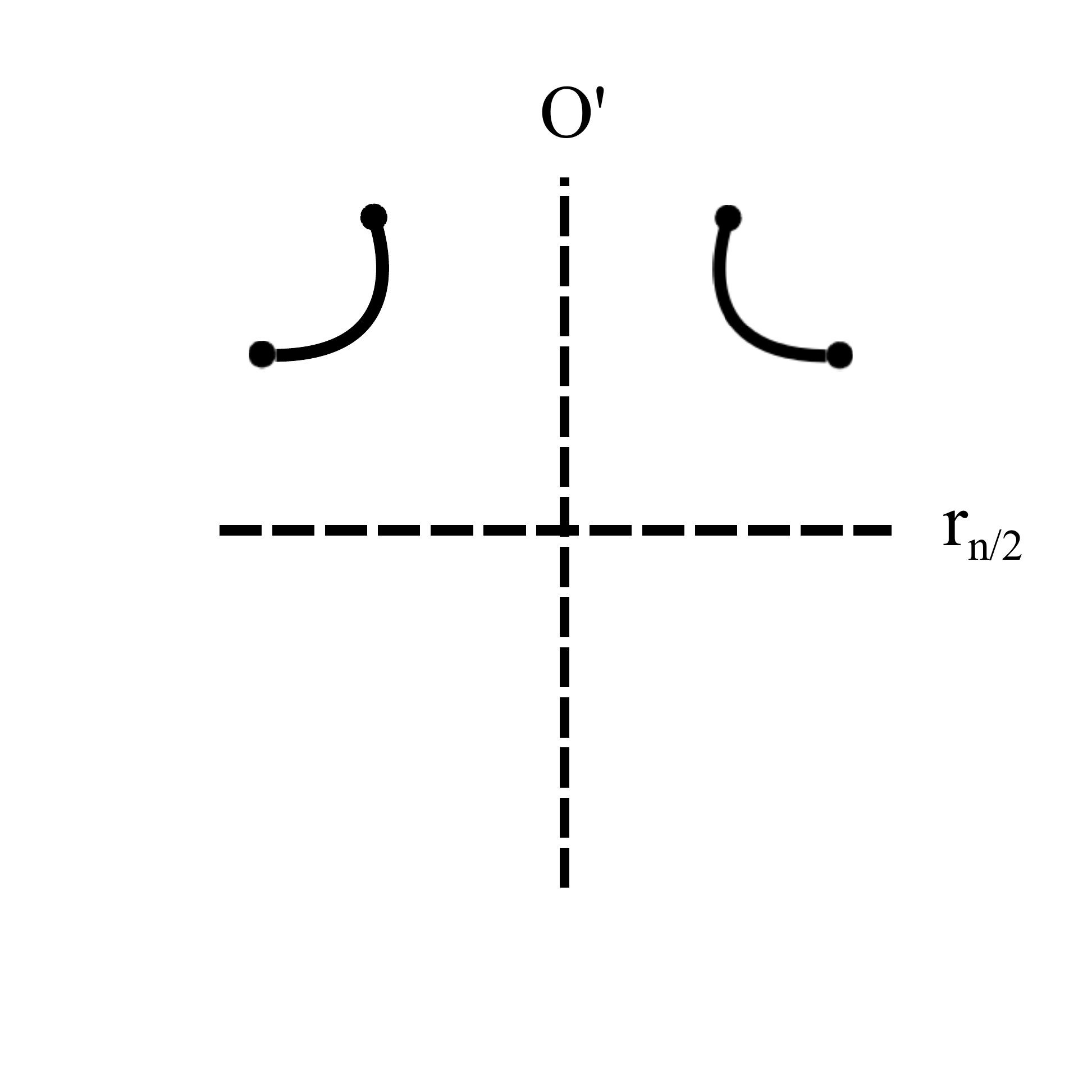}
                \caption{Case 1}
                \label{fig:case1}
        \end{subfigure}%
        ~ 
        \begin{subfigure}[b]{.22\textwidth}
                \centering
                \includegraphics[width=\textwidth]{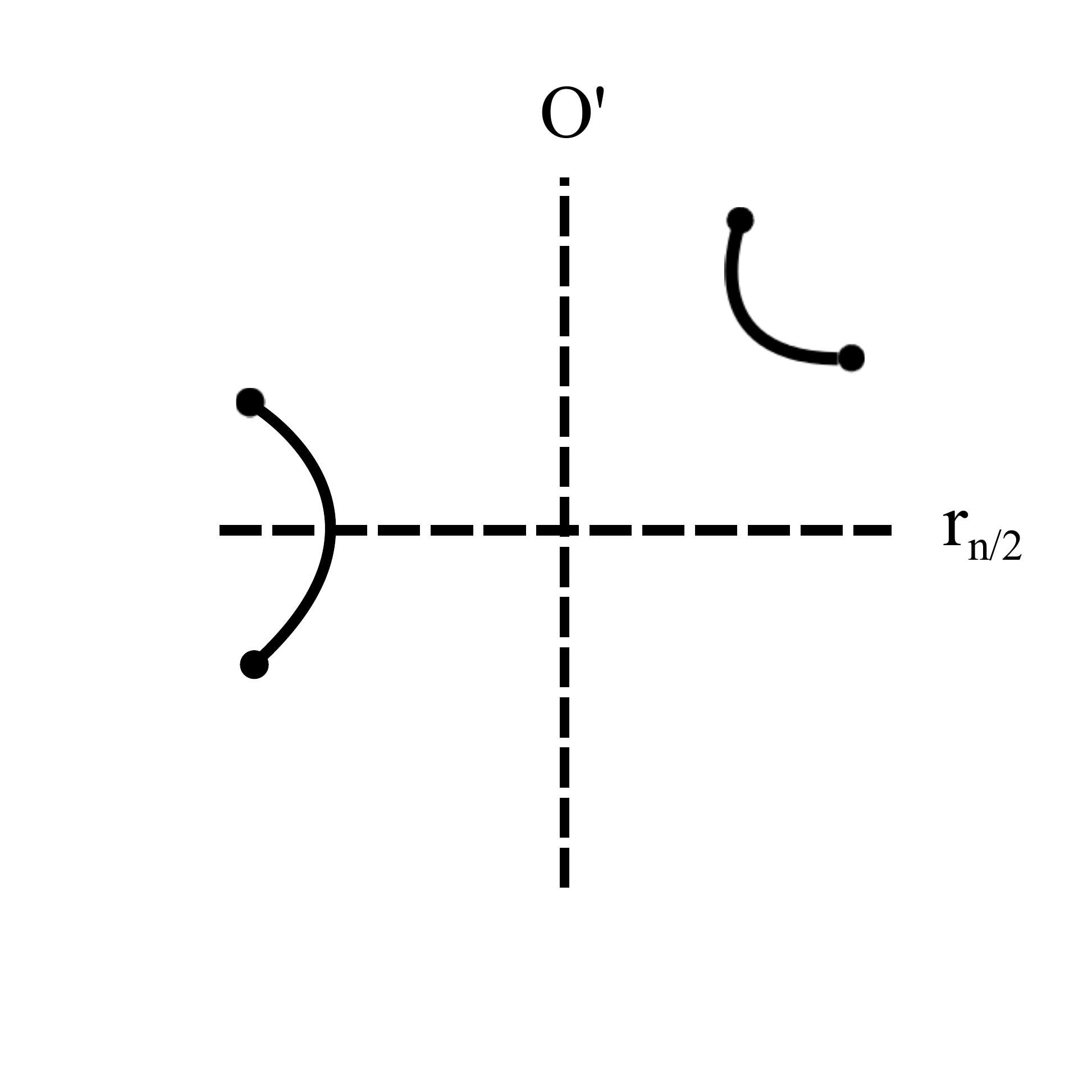}
                \caption{Case 2}
                \label{fig:case2}
        \end{subfigure}
        \begin{subfigure}[b]{0.22\textwidth}
                \centering
                \includegraphics[width=\textwidth]{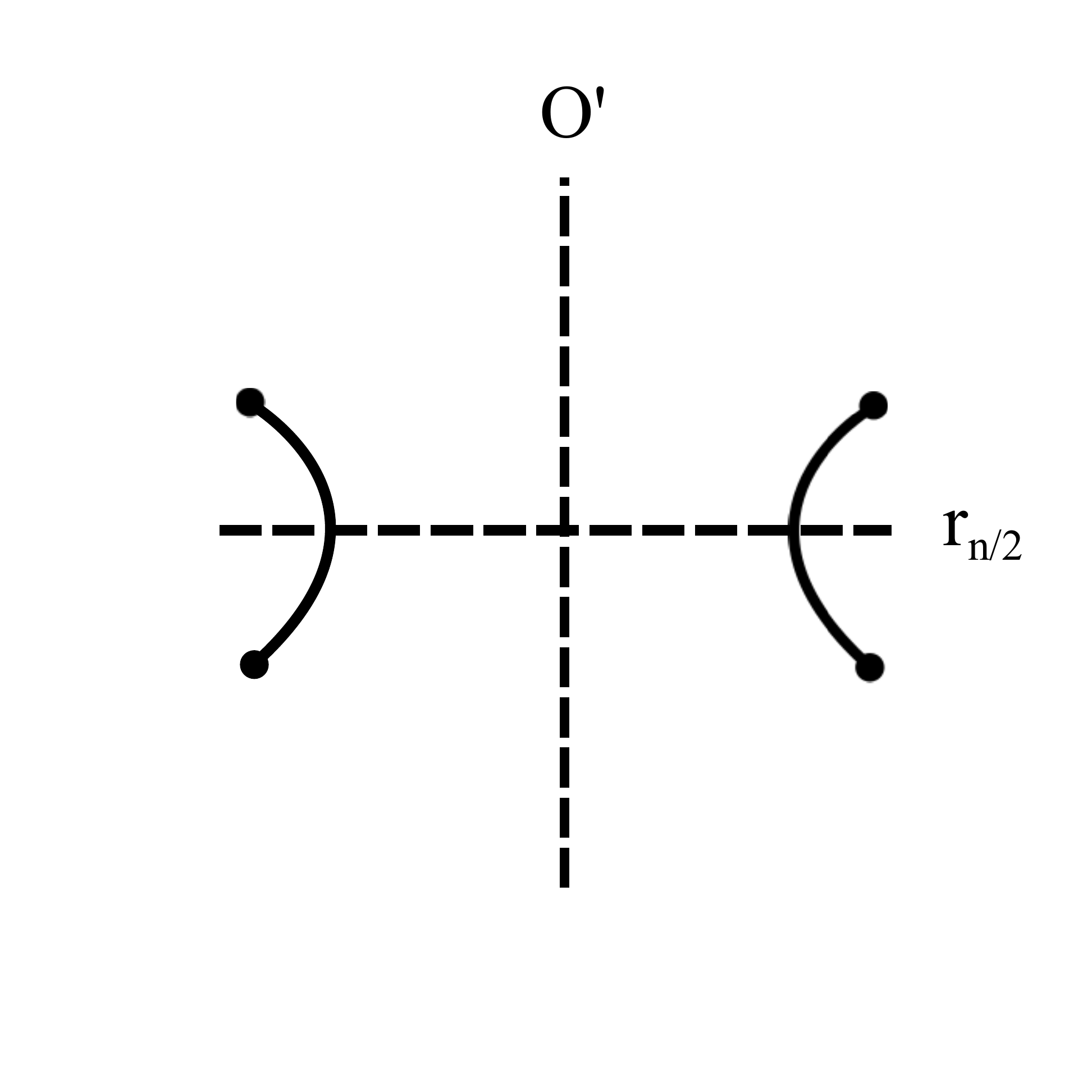}
                \caption{Case 3}
                \label{fig:case3}
        \end{subfigure}
        ~ 
        \begin{subfigure}[b]{0.22\textwidth}
                \centering
                \includegraphics[width=\textwidth]{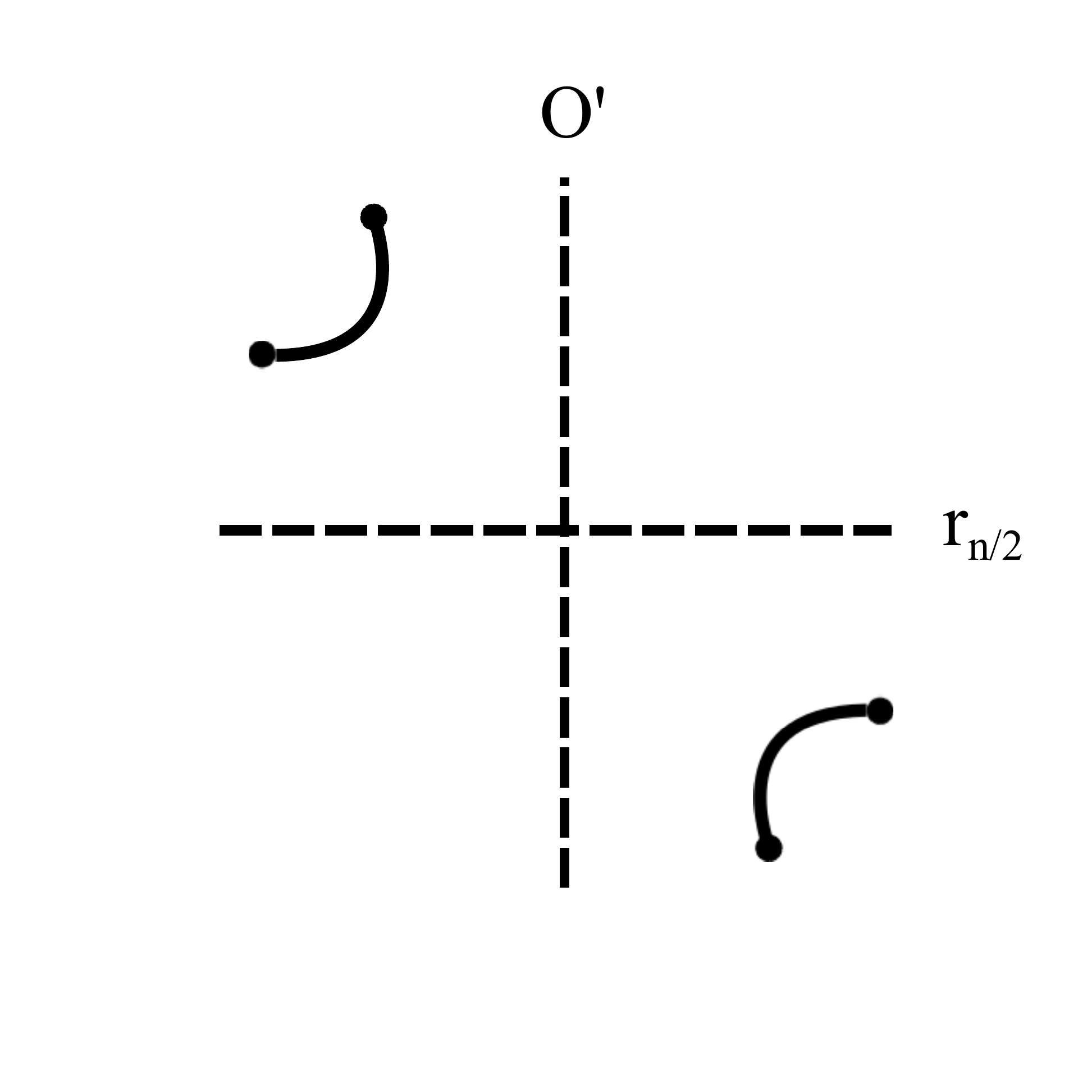}
                \caption{Case 4}
                \label{fig:case4}
        \end{subfigure}
        \caption{The different locations for the east and west pair.}\label{fig:list}
\end{figure} 

Since each increasing rotation increases the the number of intersections with $T_0$ by $2$ and each decreasing rotation decreases the number of intersections with $T_0$ by $2$, we obtain

\[|I(r_\frac{n}{2},T_1)| = |I(O,T_1)| + 2c_1 - 2d_1\]
\[|I(r_\frac{n}{2},T_0)| = |I(O,T_0)| + 2c_0 - 2d_0.\]

We substitute the above equations into the inequality $|I(r_\frac{n}{2},T_1)|  - |I(r_\frac{n}{2},T_0)|\leq 2$, and obtain

\[|I(O,T_1)| + 2c_1 - 2d_1 - (|I(O,T_0)| + 2c_0 - 2d_0) \leq 2. \]

Recall that by our choice of $T_0$, $|I(O,T_1)| - |I(O,T_0)| = -2$. So

\[2c_1 - 2d_1 - 2c_0 + 2d_0 \leq 4.\]

There must be a total of $\frac{n}{2}$ rotations to arrive at the position $r_{\frac{n}{2}}$, so

\[c_1 + l_1 + d_1 = \frac{n}{2} \ \textrm{and} \  c_0 + l_0 + d_0 = \frac{n}{2}, \]
which is equivalent to 
\[-2d_1 = 2c_1 + 2l_1 - n \ \textrm{and} \ 2d_0 = n - 2c_0 - 2l_0. \]

We obtain 
\begin{center}
\begin{align*}
  &2c_1 + (2c_1 + 2l_1 - n) - 2c_0 + (n - 2c_0 - 2l_0) \leq 4 \\
  \Rightarrow & 4(c_1 - c_0) + 2(l_1 - l_0) \leq 4 \\
  \Rightarrow &2c_1 + l_1 -( 2c_0 +l_0) \leq 2\\
   \Rightarrow & 2c_1 + l_1\leq 2+(2c_0 + l_0).\\
\end{align*}
\end{center}
By the inductive hypothesis,  $2c_0 + l_0\leq 2m$. Therefore
$2c_1 + l_1  \leq 2(m+1),$ as desired. 

\end{proof}

\begin{lemma}\label{t2 bound}
  Let $T$ be a split in an $n$-skein relation and $O'$ an overstrand. Let $m$ and $ j$ be integers such that $0\leq m \leq\frac{n}{2}$ and $m+j\leq \left\lceil \frac{n}{2} \right\rceil$.  If $|I(O',T)| = n - 2m$, then $|I(r_{m+j},T)| \leq n - 2j$.
\end{lemma}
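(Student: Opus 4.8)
The plan is to reduce this statement to the inequality $2c+l \le 2m$ established in Lemma \ref{rotations}, after expressing $|I(r_{m+j},T)|$ in terms of the rotation counts along the clockwise path from $O'$ to $r_{m+j}$. Since all the genuinely geometric work has already been carried out in Lemma \ref{rotations}, I expect this lemma to follow by bookkeeping.

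First I would set $r_0 = O'$ and let $c'$, $l'$, $d'$ denote, respectively, the number of increasing, level, and decreasing rotations encountered as $O'$ is rotated clockwise from $r_0$ to $r_{m+j}$, exactly as in Definition \ref{cdl}. Because this path consists of $m+j$ rotations and each changes the number of intersections with $T$ by $+2$, $0$, or $-2$, I obtain $c' + l' + d' = m+j$ together with $|I(r_{m+j},T)| = |I(O',T)| + 2c' - 2d' = (n-2m) + 2c' - 2d'$.

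Next I would show that the desired bound is equivalent to $2c'+l' \le 2m$. Substituting $d' = (m+j) - c' - l'$ into the expression above, the claim $|I(r_{m+j},T)| \le n - 2j$ becomes $(n-2m) + 2c' - 2\big((m+j) - c' - l'\big) \le n - 2j$, which simplifies directly to $2c' + l' \le 2m$. Finally I would invoke Lemma \ref{rotations} with $O = O'$: since $|I(O',T)| = n - 2m$, that lemma yields $2c + l \le 2m$, where $c$ and $l$ count the increasing and level rotations over the entire clockwise rotation of $O'$ up to the position $r_{\frac{n}{2}}$. The hypothesis $m + j \le \lceil \frac{n}{2} \rceil$ guarantees that the rotations from $r_0$ to $r_{m+j}$ form an initial segment of this full sequence, so every increasing (resp. level) rotation counted by $c'$ (resp. $l'$) is also counted by $c$ (resp. $l$); hence $c' \le c$ and $l' \le l$, giving $2c' + l' \le 2c + l \le 2m$, exactly what is needed.

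The one point requiring care — and the only place the hypotheses are genuinely used — is the prefix argument in the last step: I must confirm that $m+j$ clockwise rotations never overshoot the position $r_{\frac{n}{2}}$, which is precisely the content of the condition $m + j \le \lceil \frac{n}{2} \rceil$. Everything else is the algebraic substitution above. I do not anticipate a real obstacle here, since all the combinatorial-geometric difficulty — relating rotation counts to the intersection number by way of arc surgery and the quadrant analysis — is absorbed into Lemma \ref{rotations}.
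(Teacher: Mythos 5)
Your proposal is correct and follows essentially the same route as the paper: the paper likewise introduces counts $c^*, l^*, d^*$ for the first $m+j$ rotations, uses $c^*+l^*+d^* = m+j$ together with $2c^*+l^* \le 2c+l \le 2m$ (the prefix observation plus Lemma \ref{rotations}), and derives $2c^*-2d^* \le 2m-2j$, which gives the bound. Your algebraic substitution of $d'$ is just a rearrangement of the paper's doubling-and-subtracting step, so the two arguments are the same in substance.
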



\begin{proof}
  The overstrand $r_{m+j}$ denotes the rotation of $O'$ past $m+j$ endpoints. The case for counterclockwise rotations $r_{\overline{m+j}}$ follows similarly, except that we require $m+j\leq \left\lfloor \frac{n}{2} \right\rfloor$. Each rotation can be classified as increasing, decreasing, or level, as described previously. We  define $c^*$ as the number of increasing rotations in the first $m+j$ rotations of $O'$. More rigorously, $c^*$ is the number of distinct $p \leq m + j$ such that $|I(r_p,T)| - |I(r_{p-1}, T)| = 2$. We define $l^*$ and $d^*$ analogously. We note that $|I(r_{m+j},T)| = |I(O',T)| + 2c^* - 2d^*=n-2m+ 2c^* - 2d^*$. 
   
 We aim to find an upper bound on $|I(r_{m+j},T)|$  under the following constraints, the first following from the definitions and the second following from the definitions and Lemma \ref{rotations}:
  \begin{equation}\label{Constraint 1}c^* + l^* + d^* = m + j \end{equation}
  \begin{equation} \label{Constraint 2}2c^* + l^* \leq 2c+l \leq 2m. \end{equation}
  
  \medskip
  
Doubling both constraints and subtracting Constraint (\ref{Constraint 1})  from Constraint (\ref{Constraint 2}) yields
$$2c^* - 2d^* \leq 2m -2j.$$

Therefore $|I(r_{m+j},T)| = n-2m+ 2c^* - 2d^* \leq n-2m + 2m - 2j = n - 2j$ as desired.\\
\end{proof}

\begin{lemma}\label{istar} Let $T$ and $S$ be terms of an $n$-skein relation and let $O$ and $O'$ be overstrands. Let $p$ be an integer such that $p\leq n$. If $|I(O',S)|+|I(O',T)|=n+p$, then $|I(O,S)|+|I(O,T)|-2I^*\leq n-p$.

\end{lemma}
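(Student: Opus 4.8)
Since each overstrand passes through the crossing and hence separates the $2n$ endpoints of the split into two groups of $n$, the number of arcs it meets is congruent to $n$ modulo $2$; so I would write $|I(O',S)| = n - 2m_S$ and $|I(O',T)| = n - 2m_T$ for nonnegative integers $m_S,m_T$. The hypothesis $|I(O',S)|+|I(O',T)| = n+p$ then reads $2(m_S+m_T) = n-p$, so the desired conclusion is exactly
\[ |I(O,S)| + |I(O,T)| - 2I^* \le 2m_S + 2m_T. \]
The plan is to treat $O$ as a rotation of $O'$: after rotating the diagram, $O = r_q$ for some $0 \le q \le \lceil n/2\rceil$ (the counterclockwise case $O=\overline{r_q}$ is handled identically using the barred versions of Lemmas \ref{rotations} and \ref{t2 bound}). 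Note that $q$ is common to both splits, since it measures only the relative angle of $O$ and $O'$.

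First I would record an upper bound on $|I(O,X)|$ for each $X\in\{S,T\}$, namely
\[ |I(O,X)| \le n - 2q + 2m_X. \]
When $q \ge m_X$ this is Lemma \ref{t2 bound} applied with $m=m_X$ and $j=q-m_X\ge 0$ (the hypothesis $m+j=q\le\lceil n/2\rceil$ holds); when $q<m_X$ the right-hand side already exceeds $n$, so the bound is automatic because any overstrand meets at most $n$ arcs.

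The key step is a second, complementary inequality relating $X^*$ to $q$:
\[ |I(O,X)| + |I(O',X)| - 2X^* \le 2q. \]
Its left-hand side counts precisely the arcs of $X$ meeting exactly one of $O$ and $O'$ (the ``both'' contribution $2X^*$ cancels and the ``neither'' arcs do not appear), and every such arc has exactly one endpoint in the symmetric difference of the two half-disks cut off by $O$ and $O'$. Since $O$ is $O'$ rotated past $q$ endpoints, that region contains exactly $2q$ endpoints, so distinct ``exactly one'' arcs consume distinct endpoints and there are at most $2q$ of them. Rewriting with $|I(O',X)| = n-2m_X$ gives $|I(O,X)| - 2X^* \le 2q - n + 2m_X$.

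Finally I would combine the two bounds according to which term realizes $I^*=\min\{S^*,T^*\}$. If $I^*=T^*$, then
\[ |I(O,S)| + |I(O,T)| - 2I^* = |I(O,S)| + \bigl(|I(O,T)| - 2T^*\bigr) \le (n-2q+2m_S) + (2q-n+2m_T) = 2m_S+2m_T, \]
using the upper bound on $|I(O,S)|$ together with the complementary bound for $T$; the case $I^*=S^*$ is symmetric. I expect the main obstacle to be establishing the complementary inequality: each of the two displayed bounds is individually lossy, and it is exactly the count of $2q$ endpoints in the symmetric-difference wedge that makes them telescope to $2m_S+2m_T$ with no slack. Verifying that every ``exactly one'' arc really does consume a distinct wedge endpoint, and locating those $2q$ endpoints cleanly when $n$ is odd, is the delicate part; everything else is bookkeeping with the already-established rotation lemmas.
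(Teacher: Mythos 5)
Your proof is correct, and it shares the paper's overall skeleton: both arguments realize $O$ as a rotation $r_q$ of $O'$, both invoke Lemma \ref{t2 bound} to bound the intersections of $O$ with one of the two terms, and both hinge on exactly the complementary inequality $|I(O,X)|+|I(O',X)|-2X^*\le 2q$, which in the paper appears (after substituting $|I(O',S)|=p+2m$ and $q=m+j$) as $|I(r_{m+j},S)|-2I^*\le 2(m+j)-(p+2m)$. Where you genuinely diverge is in how that key inequality is established. The paper proves it incrementally: it tabulates the possible joint changes of $|I(O,S)|$ and $I^*$ as $O$ rotates past one endpoint at a time, observes that each step raises $|I(O,S)|-2I^*$ by at most $2$, and sums over the $m+j$ steps from $r_0=O'$. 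You prove it in one stroke: the left side counts the arcs meeting exactly one of $O$, $O'$, each such arc has exactly one endpoint in the symmetric-difference wedge, and that wedge contains exactly $2q$ endpoints. This count is valid whether $n$ is even or odd --- each of the two wedge sectors contains exactly $q$ split endpoints regardless of parity, and a four-sector case check of where an arc's endpoints lie confirms the ``exactly one endpoint'' claim --- so the delicacy you flag at the end is not actually an obstacle. Your global argument buys transparency and economy: it explains what the quantity $|I(O,X)|+|I(O',X)|-2X^*$ measures and avoids the paper's per-step case table (which as printed is not even exhaustive, e.g.\ the change pair $(0,0)$ is absent, though harmlessly so since the needed conclusion $\Delta\le 2$ still holds for it). The paper's incremental version buys uniformity: it reuses the rotation bookkeeping (increasing/level/decreasing rotations and Lemma \ref{rotations}) already built for the surrounding lemmas. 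A last cosmetic difference: you establish the complementary bound for both $S$ and $T$ and branch on which realizes $I^*$, whereas the paper assumes without loss of generality that $I^*=S^*$; both treatments of the minimum are sound.
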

\begin{proof} Fix the position of $O'$ and let $|I(O',S)|=p+2m$ and $|I(O', T)|=n-2m$. We note that the value of $|I(O,S)|-2I^*$ depends on the position of $O$.  Without loss of generality, assume that $O'$ is counterclockwise of $O$ and $I^*$ is realized in the diagram of $S$, meaning that  $|I(O,O',S)| = S^* = I^*$. We may assume this because $|I(O', T)|= n-2m$ can be expressed in the form $p+2m$ for some $p$. Hence $0\leq m\leq \frac{n-p}{2}$. We also note that each  position of $O$ can be denoted as $r_i$, where $r_i$ is the rotation of $O'$ clockwise past $i$ endpoints. We observe that $r_i$ intersects 2 more, 2 fewer, or the same number of arcs as $r_{i-1}$ by rotating in and out of arcs (see Figure \ref{fig:Rotations}). To tabulate $\Delta I^*$, we must consider whether the arcs we have rotated in and out of were arcs in $I(O',S)$. Rotating past one endpoint, $O$ may intersect 1 more, 2 more, 1 fewer, 2 fewer, or the same number of arcs in $I(O',S)$ as before. 
\begin{figure}[H]
        \centering
        \begin{subfigure}[b]{0.2\textwidth}
                \centering
                \includegraphics[width=\textwidth]{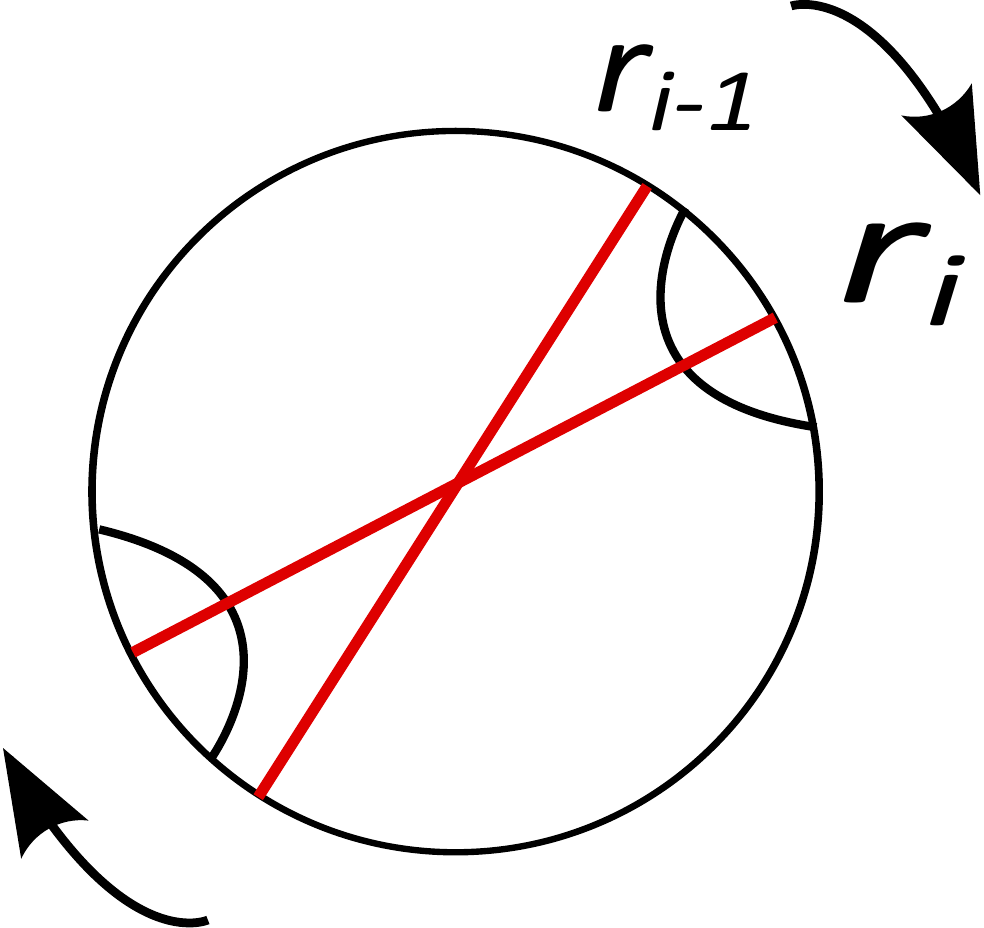}
                \caption{ }
                \label{fig:IncRot}
        \end{subfigure}
        \begin{subfigure}[b]{0.2\textwidth}
                \centering
                \includegraphics[width=\textwidth]{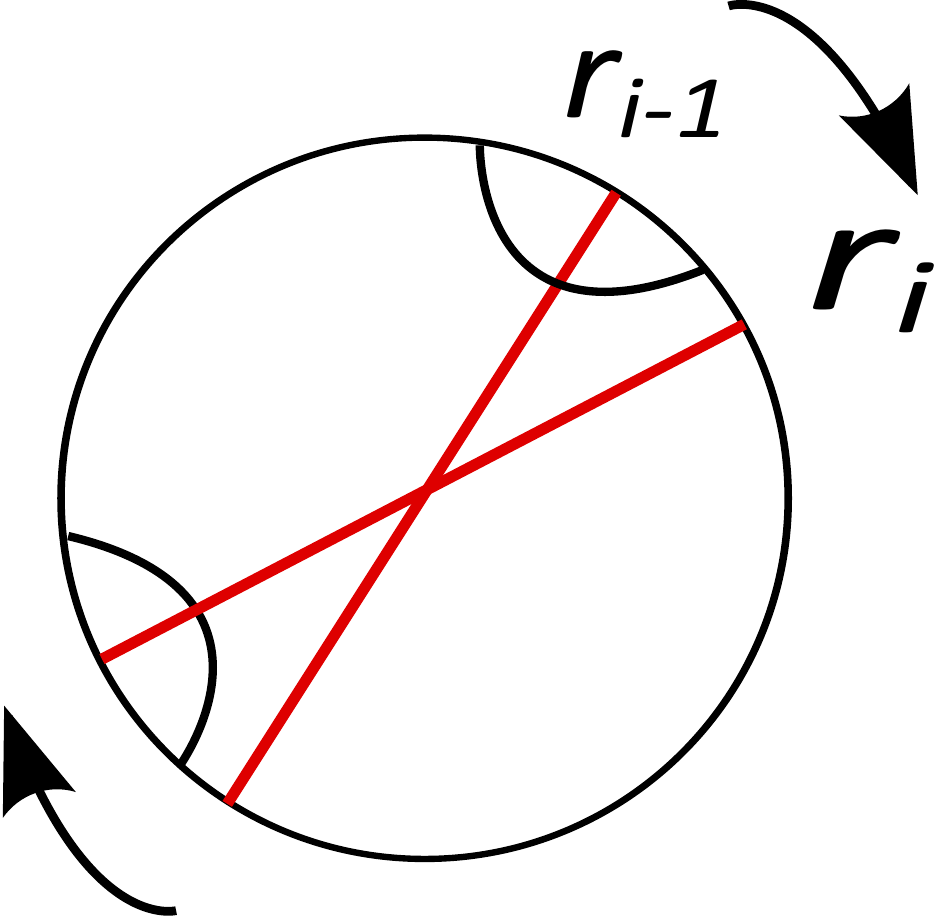}
                \caption{ }
                \label{fig:LevelRot}
        \end{subfigure}
        ~ 
        \begin{subfigure}[b]{0.2\textwidth}
                \centering
                \includegraphics[width=\textwidth]{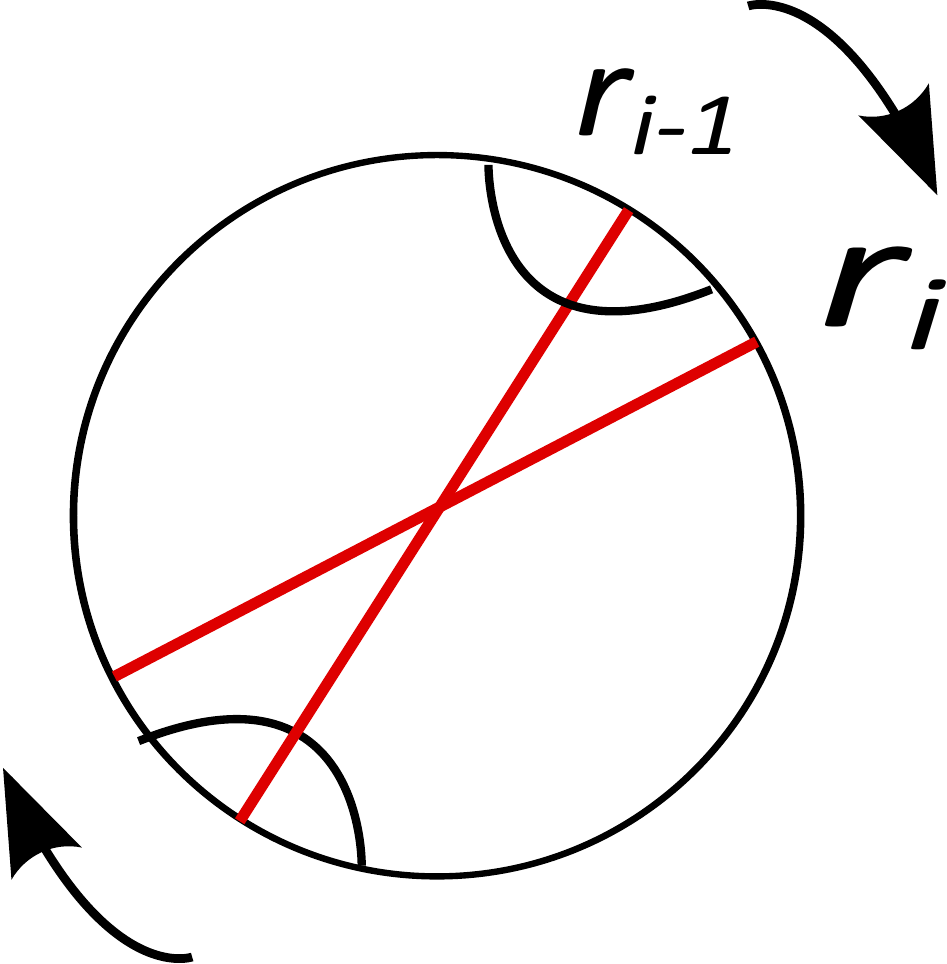}
                \caption{ }
                \label{fig:DecRot}
        \end{subfigure}
        \caption{(a) An increasing rotation (b) A level rotation (c) A decreasing rotation} \label{fig:Rotations}
\end{figure}

  The table below lists all possible changes to $|I(O,S)|$ and $I^*$ due to rotation past one endpoint. The last column displays the net effect on $|I(O,S)| - 2I^*$

\begin{center}
    \begin{tabular}{ | c | c | c | p{5cm} |}
    \hline
  $ \Delta |I(O,S)| $&    $ \Delta I^*$ & $ \Delta (|I(O,S)|-2I^*)$ \\ \hline
    +2 &+2 & -2  \\ \hline
  +2 &+1 & 0  \\ \hline
+2 &0 & +2  \\ \hline
0 &+1 & -2 \\ \hline
0&-1& +2  \\ \hline
-2 &0 & -2  \\ \hline
-2&-1& 0  \\ \hline
-2&-2& +2  \\ \hline
    \end{tabular}
\end{center}

 According to the table, any rotation that moves $O$ from $r_{i-1}$ to $r_{i}$ can increase the quantity $ |I(O,S)|-2|I(O,O',S)|$ by at most 2. Thus
 \[|I(r_{m+j},S)| - 2|I(r_{m+j},O',S)| - (|I(r_0,S)| - 2 |I(r_0,O',S)|) \leq 2(m +j).\]
  Since $r_0$ and $O'$ intersect the same arcs of S, $I(O', S)=I(r_0,S)=I(r_0,O',S)$ which implies
 
 \[(|I(r_0,S)| - 2 |I(r_0,O',S)|) = -|I(O',S)|=-(p + 2m).\] 

It follows that 

\[|I(r_{m+j},S)| - 2I^*=|I(r_{m+j},S)| - 2|I(r_{m+j},O,S)| \leq 2(m + j) - (p +2m) = -p + 2j.\] 

We apply Lemma \ref{t2 bound}, which states if  $|I(O',T)| = n - 2m$, then $|I(r_{m+j},T)| \leq n - 2j$ 

\[ |I(r_{m+j},T)| + |I(r_{m+j},S)| - 2I^* \leq (n-2j)- p+2j=n-p. \] 

Regardless of which rotation $r_{m+j}$ from $O'$ we choose for $O$, we always have 
$$|I(O,S)| + |I(O,T)| - 2I^* \leq n- p.$$
\end{proof}



\begin{lemma}\label{biglem} Let $S$ and $T$ be terms in an $n$-skein relation and let $O$ be any single overstrand placed on the split diagrams of $S$ and $T$. Then 

\begin{enumerate} 

\item If $n$ is even and $\max_{O} \{|I(O,S)| + |I(O,T)| \}= n + 2k$, then $|P(T) - P(S)| \leq \lfloor \frac{n^2}{2} \rfloor - 2k^2$; 
\item If $n$ is odd and $\max_{O} \{|I(O,S)| + |I(O,T)| \}= n + 1 +  2k$, then $|P(T)-P(S)| \leq \lfloor \frac{n^2}{2} \rfloor - 2k^2 - 2k$.
\end{enumerate} Here $k$ is an integer such that $k\leq \lfloor\frac{n}{2}\rfloor$. 
\end{lemma}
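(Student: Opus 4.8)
The plan is to prove Lemma \ref{biglem} by induction on $n$, exploiting the recursive construction of skein relations described earlier: every term of the $n$-skein relation descends from a term of the $(n-1)$-skein relation by laying down one \emph{building} overstrand $\beta$ and resolving its new crossings as $A$- or $B$-splits. Throughout, I reserve the symbol $O$ for the \emph{measuring} overstrand appearing in the statement, i.e.\ the one realizing $\max_{O}\{|I(O,S)|+|I(O,T)|\}$. For the base case $n=2$ the only pair of terms is the $A$-split and the $B$-split, whose powers differ by $2$; a short case check shows that every measuring overstrand satisfies $|I(O,S)|+|I(O,T)|=2$, so $k=0$ and the asserted bound reads $2\le\lfloor 4/2\rfloor$, with equality.

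For the inductive step, fix terms $S,T$ of the $n$-skein relation, let $S_0,T_0$ be the $(n-1)$-terms they descend from, and let $\beta$ be the building overstrand. First I would bound the power growth: since $P(S)=P(S_0)+(a_S-b_S)$ with $a_S+b_S=|I(\beta,S_0)|$ (and similarly for $T$), the triangle inequality gives
\[ |P(S)-P(T)|\ \le\ |P(S_0)-P(T_0)|+|I(\beta,S_0)|+|I(\beta,T_0)|. \]
Next, identifying the measuring overstrand $O$ with the overstrand added \emph{after} $\beta$ (the role of $O'$ in the Notation block), Lemma \ref{first} gives $|I(O,S)|\le|I(O,S_0)|+1$ and $|I(O,T)|\le|I(O,T_0)|+1$, so if $|I(O,S_0)|+|I(O,T_0)|=(n-1)+p$ then the level-$n$ maximum is at most $n+p+1$. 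Crucially, Lemma \ref{istar} applied at level $n-1$ (with its $O'$ our $O$ and its $O$ our $\beta$) converts this large intersection count for the measuring overstrand into the small bound
\[ |I(\beta,S_0)|+|I(\beta,T_0)|\ \le\ (n-1)-p+2I^{*}, \]
which is exactly the tradeoff that prevents a pair of terms from being both far apart in power and highly intersected.

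Combining these three displays with the inductive hypothesis — applied to $S_0,T_0$, whose maximum intersection count is at least $(n-1)+p$, so that writing it in the appropriate parity form gives an upper bound on $|P(S_0)-P(T_0)|$ and a ceiling $p\le 2k_0$ (or $p\le 2k_0+1$) — reduces the lemma to a purely arithmetic inequality among $n$, the level-$n$ parameter $k$, and the auxiliary quantities $k_0,p,I^{*}$. Here I would use $\lfloor n^2/2\rfloor-\lfloor (n-1)^2/2\rfloor\in\{n-1,n\}$ together with $p\ge 2k-1$ (from the maximum being $n+2k$ or $n+1+2k$) to show that the linear gain $(n-1)-p+2I^{*}$ is always absorbed by the improvement of the quadratic defect from $-2k_0^{2}$ to $-2k^{2}$, with the additional $-2k$ in the odd case. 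The mechanism that makes this work is the coupling hidden in Lemma \ref{first}: the resolution that maximizes the power spread (all-$A$ on one side, all-$B$ on the other over the shared arcs $I(\beta,O,\cdot)$) runs \emph{against} the equality condition of Lemma \ref{first} on the $B$-side, so a genuinely large power spread forces the measuring overstrand's gain to drop below $+1$ there, shrinking $p$ and hence $k$ precisely enough to keep the quadratic saving dominant.

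The main obstacle I anticipate is exactly this coupled bookkeeping. The quadratic terms $2k^2$ and $2k_0^2$ must be tracked \emph{exactly} across the level change, and the argument must control the overlap $I^{*}=\min\{S^{*},T^{*}\}$ from above (intuitively by something like $k$), so that the $+2I^{*}$ term coming from Lemma \ref{istar} never overwhelms the quadratic gain; the two must be balanced against each other rather than bounded separately. There are two parity cases to verify, $n$ even (from $n-1$ odd) and $n$ odd (from $n-1$ even), which shift the floor and the defining form of $k$ differently; I expect the tightest comparison in the odd case, where the bound carries the extra $-2k^2-2k$. Finally, the degenerate situation $S_0=T_0$, in which both level-$n$ terms descend from a single level-$(n-1)$ term so that $P(S_0)-P(T_0)=0$, should be dispatched as an easy special case.
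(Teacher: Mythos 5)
Your overall strategy---induction on $n$ through the mother/offspring construction, with Lemma \ref{first} and Lemma \ref{istar} as the two key inputs---is the same as the paper's, and your third paragraph correctly describes, in prose, the mechanism that makes the induction close. But your formal skeleton does not implement that mechanism, and the patch you propose in its place fails. Concretely: your first display bounds the power growth by $|I(\beta,S_0)|+|I(\beta,T_0)|$ alone, and your second display bounds that quantity by $(n-1)-p+2I^{*}$; chaining them leaves $|P(S)-P(T)| \leq |P(S_0)-P(T_0)| + (n-1)-p+2I^{*}$ with the $2I^{*}$ uncancelled. You then say the argument ``must control the overlap $I^{*}$ from above (intuitively by something like $k$).'' No such bound exists and none is available from the lemmas: $I^{*}$ is a geometric overlap between the arcs met by the building strand $\beta$ and those met by the measuring strand $O$, whose only a priori constraint is $I^{*}\leq \min\{|I(\beta,S_0)|,|I(\beta,T_0)|\}$, a quantity of order $n$; nothing in your three displays excludes a configuration with $k$ small, $|P(S_0)-P(T_0)|$ near maximal, and $I^{*}$ large, and for such a configuration your chain overshoots the target bound by $2I^{*}$. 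Indeed, the entire content of Lemma \ref{istar} is that only the \emph{combination} $|I(\beta,S_0)|+|I(\beta,T_0)|-2I^{*}$ is controlled, never the two pieces separately.

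The paper's proof avoids estimating $I^{*}$ altogether by a case split that you omit. By parity and Lemma \ref{first}, either $\max_{O}\{|I(O,S)|+|I(O,T)|\}$ equals $\max_{O}\{|I(O,S_0)|+|I(O,T_0)|\}$, or it exceeds it by exactly $2$. In the non-increasing case the crude bound $\delta\leq (n-1)+p$ suffices, because the level-$n$ parameter drops by one relative to the new parity form and the quadratic defect absorbs the full linear gain (this is where your ``$p\geq 2k-1$'' bookkeeping lives). In the increasing case, Lemma \ref{first} forces every crossing of $\beta$ with the shared arcs $I(\beta,O,S_0)$ and $I(\beta,O,T_0)$ to be resolved homogeneously (say all A-splits); but maximizing the power spread requires B-splits on the $S_0$ side, so each forced split there costs $2$, which strengthens your first display to $\delta \leq |I(\beta,S_0)|+|I(\beta,T_0)| - 2I^{*}$. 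Lemma \ref{istar} then applies to exactly this expression and gives $\delta \leq (n-1)-p$: the $I^{*}$ cancels identically and never needs an independent estimate. So the correct repair is not an upper bound on $I^{*}$ but moving the $-2I^{*}$ from your intersection-count inequality into your power-spread inequality, a move that is only legitimate in the increasing case---hence the case analysis is not optional. (A minor point: the paper starts the induction at $n=3,4$ using the known $3$- and $4$-skein relations; your $n=2$ base case is fine.)
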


\begin{proof} We proceed by induction on $n$. 
\medskip

\noindent \textbf{Base Case:} The Lemma holds for $n=3$ and $n=4$.

\medskip

\noindent \textbf{Induction:} We assume the lemma holds when an overstrand $O$ is added to an $n$-skein relation. Suppose now an additional overstrand $O'$ is added. Without loss of generality, assume $O'$ is a counterclockwise rotation of $O$. We divide the inductive step into two cases and prove each case for $n$ even and for $n$ odd. The notation $\max_{O'}$ denotes the maximum possible value over all choices of $O'$.

\medskip

\noindent \textbf{Case 1:} Assume that $\max_{O'} \{ |I(O',S')| + |I(O',T')| \} \leq \max_{O'} \{ |I(O',S)| + |I(O',T)| \}$. This occurs when resolving the intersections of $O$ with $T$ and with $S$ does not increase the maximum number of arcs a consistently placed overstrand ($O'$) can intersect between both diagrams. 

\medskip

\noindent \textbf{Case 2:} Assume that $\max_{O'} \{|I(O',S')| + |I(O',T')| \}=\max_{O'} \{|I(O',S)| + |I(O',T)| \}+ 2$. This occurs when resolving the intersections of $O$ with $T$ and with $S$ increases the maximum number of arcs a consistently placed overstrand ($O'$) can intersect between both diagrams. 

\medskip

We note that for any $O'$, $|I(O',S')|$ has the same parity as $|I(O',T')|$ and $|I(O',S)|$ has the same parity as $|I(O',T)|$. So it is not possible to have \\ $\max_{O'}\{|I(O',S')| + |I(O',T')|\}=\max_{O'}\{ |I(O',S)| + |I(O',T)| \}+ 1$. In addition, by Lemma \ref{first} $\max_{O'}\{ |I(O',S')| + |I(O',T')| \}\leq \max_{O'} \{ |I(O',S)| + |I(O',T)| \}+ 2$. Therefore, the two cases above are exhaustive. 

\medskip

\noindent \textbf{Proof of Case 1:} We restate our inductive hypothesis:
\[ \max_{O}\{|I(O,S)| + |I(O,T)| \}= \begin{cases} n + 2k \Longrightarrow |P(T)-P(S)| \leq \lfloor \frac{n^2}{2} \rfloor - 2k^2 \ \textrm{if $n$ is even}. \\
n + 1 + 2k \Longrightarrow |P(T)-P(S)| \leq \lfloor \frac{n^2}{2} \rfloor - 2k^2 - 2k \ \textrm{if $n$ is odd.}
\end{cases} \]
We assume the conditions of Case 1: \[\max_{O'}\{|I(O',S')| + |I(O',T')|\}\leq \max_O \{|I(O,S)| + |I(O,T)|\} =
 \begin{cases}
   n+2k \ \textrm{if $n$ is even} \\
   n + 1 + 2k \ \textrm{if $n$ is odd.}
 \end{cases} \]

We will show that
$$ \max_{O'} \{|I(O',S')| +  |I(O',T')|\} \leq \max_O \{|I(O,S)| + |I(O,T)|\}$$
\begin{center}
 $\Longrightarrow$
\end{center}
$$\begin{cases}
  |P(T')-P(S')| \leq \left\lfloor \frac{(n+1)^2}{2} \right\rfloor - 2(k-1)^2 - 2(k-1) \ \textrm{if $n$ is even} \\
\\
|P(T')-P(S')| \leq \left\lfloor \frac{(n+1)^2}{2} \right\rfloor - 2k^2 \ \textrm{if $n$ is odd.}
 \end{cases}$$

Let $\delta$ denote the change in power resulting from resolving the intersections of the overstrand $O$ with splits $S$ and $T$, i.e. $$|P(T')-P(S')|=|P(T)-P(S)| +\delta.$$ 

Without a loss of generality, assume $P(T)\geq P(S)$. Every A-split between $O$ and $T$ increases $\delta$ by one while every B-split between $O$ and $T$ decreases $\delta$ by one. Every B-split between $O$ and $S$ increases $\delta$ by one while every A-split between $O$ and $S$ decreases $\delta$ by one. Therefore, $\delta \leq\max_{O}\{ |I(O,S)| + |I(O,T)|\} = n + 2k$. This maximum occurs when the intersections of $O$ and $T$ are resolved as A-splits and the intersections of $O$ and $S$ are resolved as B-splits. Using this fact and the inductive hypothesis that $|P(T)-P(S)| \leq \left\lfloor \frac{n^2}{2} \right\rfloor - 2k^2$, we obtain  

\begin{align*} 
 |P(T')-P(S')| &\leq
 \begin{cases}
   \left\lfloor \frac{n^2}{2} \right\rfloor - 2k^2 + n + 2k \ \textrm{if $n$ is even} \\
\\
\left\lfloor \frac{n^2}{2} \right\rfloor - 2k^2 - 2k + n + 1 + 2k \ \textrm{if $n$ is odd}
 \end{cases}
 \\ 
&=
\begin{cases}
  \left\lfloor \frac{(n+1)^2}{2} \right\rfloor - 2k^2 + 2k \ \textrm{if $n$ is even} \\
\\
\left\lfloor \frac{(n+1)^2}{2} \right\rfloor - 2k^2 \ \textrm{if $n$ is odd} \\
\end{cases}
\\
&=
\begin{cases}
  \left\lfloor \frac{(n+1)^2}{2} \right\rfloor - 2(k-1)^2 - 2(k-1) \ \textrm{if $n$ is even} \\
\\
\left\lfloor \frac{(n+1)^2}{2} \right\rfloor - 2k^2 \ \textrm{if $n$ is odd,}
\end{cases}
\end{align*} 

as desired. 

\medskip

\noindent \textbf{Proof of Case 2} We restate our inductive hypothesis: $$\max_{O}  \{  |I(O,S)| + |I(O,T)| \} =
\begin{cases}
   n + 2k \Longrightarrow |P(T)-P(S)| \leq \left\lfloor \frac{n^2}{2} \right\rfloor - 2k^2 \ \textrm{if $n$ is even} \\
   \\
   n + 1 + 2k \Longrightarrow |P(T)-P(S)| \leq \left\lfloor \frac{n^2}{2} \right\rfloor - 2k^2 - 2k \ \textrm{if $n$ is odd.}
\end{cases}$$

We assume the conditions of Case 2:
\begin{equation}\label{increasing}
\max_{O'} \{|I(O',S')| + |I(O',T')|\}=\max_O \{|I(O,S)| + |I(O,T)|\}+2=
\begin{cases}
  n+2k+2 \ \textrm{if $n$ even} \\
  n + 1 + 2k + 2 \ \textrm{if $n$ odd.}
\end{cases}
\end{equation}
We will show that 
\[\max_O\{|I(O',S')| + |I(O',T')|\} =
\begin{cases}
 (n + 1) +1 + 2k \Rightarrow |P(T')-P(S')| \leq \left\lfloor \frac{(n+1)^2}{2} \right\rfloor - 2k^2 - 2k, \ \textrm{$n$ even}\\
 \\
n+1+2k + 2 \Rightarrow |P(T')-P(S')| \leq \left\lfloor \frac{(n+1)^2}{2} \right\rfloor - 2k^2, \ \textrm{$n$ odd.}
\end{cases}\]

This case occurs when $|I(O',S')|=|I(O',S)|+1$ and $|I(O',T')|=|I(O',T)|+1$. Recall that by Lemma \ref{first}, this occurs precisely when all the intersections of $O$ with $S^*$ and all the intersections of $O$ with $I(O',S)$ and all the intersections of $O$ with $I(O',T)$ are resolved as A-splits. To maximize $\delta$ under these conditions, we split all intersections of $O$ with $I(O,T) - I(O',T)$ as as A-splits and all the intersections of intersections of $O$ with $I(O,S) - I(O',S)$ as B-splits. Ordinarily, $\delta \leq n + 2k$. But Lemma \ref{first} informs us that the assumption in Equation \ref{increasing} necessitates performing `inefficient' A-splits on $S$. Each of these $A$ splits reduces the maximum value of $\delta$ by 2. Retaining the assumption that $I(O,O',S) = I^*$, we have $\delta \leq I(O,S)+I(O,T)-2I^*$. By Lemma \ref{istar}, $I(O,S)+I(O,T)-2I^* \leq n - 2k$.

We observe
\begin{align*} 
 |P(T')-P(S')|&=|P(T)-P(S)| +\delta\\
&\leq
\begin{cases}
  \left\lfloor \frac{n^2}{2} \right\rfloor - 2k^2 + n - 2k \ \textrm{if $n$ is even} \\
  \\
\left\lfloor \frac{n^2}{2} \right\rfloor - 2k^2 - 2k + n - 2k - 1 \ \textrm{if $n$ is odd}
\end{cases}
\\
&= 
\begin{cases}
   \left\lfloor \frac{(n+1)^2}{2} \right\rfloor - 2k^2 - 2k \ \textrm{if $n$ is even} \\
   \\
 \left\lfloor \frac{(n+1)^2}{2} \right\rfloor - 2(k+1)^2 \ \textrm{if $n$ is odd,}
\end{cases}
\end{align*} 
 as desired.
\end{proof}

\begin{theorem}\label{Upper Bound}
  Let $R$ be an $n-$skein relation for $n\geq 2$. Then, $w(R) \leq \left\lfloor \frac{n^2}{2}\right\rfloor$.
\end{theorem}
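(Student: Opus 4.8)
The plan is to deduce Theorem~\ref{Upper Bound} almost immediately from Lemma~\ref{biglem}, which is the technical engine already established. The width $w(R)$ of an $n$-skein relation is, by definition, the difference between the highest and lowest powers of $A$ appearing among its terms. Since every term corresponds to a split, this equals $\max_{S,T} |P(T)-P(S)|$ where $S$ and $T$ range over all terms of $R$. So the entire content of the theorem reduces to bounding $|P(T)-P(S)|$ by $\lfloor n^2/2 \rfloor$ uniformly over all pairs of terms, which is precisely what Lemma~\ref{biglem} delivers once we observe that the correction terms it provides are nonnegative.

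First I would fix an arbitrary pair of terms $S$ and $T$ in $R$ and consider the quantity $\max_O \{|I(O,S)| + |I(O,T)|\}$, taken over all placements of a single overstrand $O$. Writing this maximum as $n+2k$ in the even case or $n+1+2k$ in the odd case (with $k \ge 0$, since an overstrand bisecting either split always meets at least the appropriate baseline number of arcs), Lemma~\ref{biglem} gives $|P(T)-P(S)| \le \lfloor n^2/2\rfloor - 2k^2$ when $n$ is even and $|P(T)-P(S)| \le \lfloor n^2/2\rfloor - 2k^2 - 2k$ when $n$ is odd. In either case the subtracted quantity $2k^2$ (resp.\ $2k^2+2k$) is nonnegative, so we conclude $|P(T)-P(S)| \le \lfloor n^2/2\rfloor$ regardless of $k$.

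Taking the maximum over all pairs $S,T$ then yields $w(R) = \max_{S,T}|P(T)-P(S)| \le \lfloor n^2/2\rfloor$, completing the induction-free final step. The only genuine care needed is to check the two base cases $n=2$ and $n=3$ directly (Lemma~\ref{biglem} is stated with base cases $n=3,4$, so the theorem's extra case $n=2$ must be verified by hand from the ordinary $2$-skein relation, whose width is known to be $4 = \lfloor 2^2/2\rfloor + 2$, and here I would double-check the constant so that the $n=2$ instance is consistent with the claimed bound $\lfloor n^2/2\rfloor = 2$); this is the one place where the reduction might need a small separate argument rather than a direct appeal to the lemma.

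The main obstacle, such as it is, is not in this final deduction but in confirming that $k \ge 0$ always holds and that the maximizing overstrand genuinely realizes a value of the stated form $n+2k$ or $n+1+2k$ with the correct parity. Since $|I(O,S)|$ and $|I(O,T)|$ each have fixed parity determined by $n$ (an overstrand bisecting a split with $n$ endpoints on each side meets a number of arcs congruent to $n \bmod 2$), their sum is congruent to $2n \equiv 0 \pmod 2$, so writing it as $n+2k$ (even $n$) or $n+1+2k$ (odd $n$) with integer $k$ is automatic; and the maximum is at least $n$ (resp.\ $n+1$) because one can always place $O$ to meet every arc of at least one parallel-type configuration, forcing $k \ge 0$. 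With these parity and positivity observations in hand, the theorem follows with no further computation.
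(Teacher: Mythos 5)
Your deduction is essentially correct, but it is not the route the paper takes, and the difference is worth spelling out. The paper does not treat Theorem~\ref{Upper Bound} as a one-line corollary of Lemma~\ref{biglem}; it runs a second induction on $n$: any pair of terms $S',T'$ of the $(n+1)$-skein relation arises from a pair $S,T$ of the $n$-skein relation by adding an overstrand, so $|P(T')-P(S')|\le |P(T)-P(S)|+|I(O,S)|+|I(O,T)|$, and the paper then splits into two cases --- when $\max_O\{|I(O,S)|+|I(O,T)|\}\le n$ it invokes the induction hypothesis of the theorem itself, and only when this maximum exceeds $n$ (i.e.\ $k\ge 1$) does it invoke Lemma~\ref{biglem}. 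You instead apply Lemma~\ref{biglem} directly to every pair of terms of the $n$-skein relation, including pairs with $k\le 0$, and observe that $2k^2$ and $2k^2+2k=2k(k+1)$ are nonnegative for every integer $k$. Your parity argument correctly shows that the lemma's hypothesis is meaningful for every pair, and the lemma's only stated constraint is $k\le\lfloor n/2\rfloor$, so your shortcut is sound as a reading of the lemma as stated, and it buys brevity (no second induction; only the $n=2$ case needs a hand check, since the lemma's base cases are $n=3,4$). What it costs is that you now lean on Lemma~\ref{biglem} in the regime $k\le 0$, a regime the paper's own argument is structured never to use: its Case 1 handles exactly those pairs by the theorem's induction hypothesis instead. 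If one has any doubt about whether the lemma's proof really delivers the negative-$k$ conclusions, the paper's two-case structure insulates the theorem from that doubt, while your argument does not.

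Two factual slips in your write-up, both harmless to the logic. First, your claim that $k\ge 0$ always holds is false: for $n=4$, let $S$ match each boundary point to its clockwise neighbor starting at an odd position and let $T$ be the same matching shifted by one; then every placement of $O$ meets at most two arcs in total, so the maximum is $2=n+2(-1)$ and $k=-1$. This does not matter, because, as you yourself note, the subtracted quantities are nonnegative for every integer $k$, so the conclusion holds ``regardless of $k$''; you should simply delete the $k\ge0$ claim rather than try to repair it. Second, the width of the $2$-skein relation is $2$, not $4$: its two terms carry coefficients $A^{+1}$ and $A^{-1}$. The $4$ in the classical bound $\mathrm{span}\langle K\rangle\le 4c_2$ includes the contribution from state components, not just the skein width, so the $n=2$ instance of the theorem, $w(R)\le\lfloor 2^2/2\rfloor=2$, holds exactly and your flagged hand check is trivial.
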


\begin{proof}
  We proceed by induction on $n$, showing that for any terms $S$ and $T$ in $R$, 
  \[|P(T)-P(S)| \leq \left\lfloor \frac{n^2}{2} \right\rfloor. \]

\medskip

\noindent \textbf{Base Case:} Observe that this theorem holds for $n=2,3,4$ by \citep{Kauffman87, Triple, Quadruple}. \\


\medskip

\noindent \textbf{Induction:} Suppose for all $S$ and $T$ in an $n$-skein, $| P(T)-P(S)| \leq \left\lfloor \frac{n^2}{2} \right\rfloor$. We will show that $| P(T')-P(S')| \leq \left\lfloor \frac{(n+1)^2}{2} \right\rfloor$ where $S'$ and $T'$ are any terms in in the $(n+1)$-skein relation obtained by adding $O'$ to the $n$-skein and resolving the crossings. Since $S'$ and $T'$ are the results of resolving intersections of $O$ with $S$ and the intersections of $O$ with $T$, we obtain, 
$$|P(T')-P(S')|\leq |P(T)-P(S)|+|I(O,S)|+|I(O,T)|.$$

\medskip

\noindent \textbf{Case 1:} Suppose  $ \max_O \{ |I(O,S)| + |I(O,T)| \} \leq n$. \\
 We use the inductive hypothesis and obtain, 
 $$ |P(T')-P(S')|\leq |P(T)-P(S)|+|I(O,S)|+|I(O,T)|\leq \left\lfloor\frac{n^2}{2}\right\rfloor+n\leq  \left\lfloor \frac{(n+1)^2}{2} \right\rfloor.$$

\medskip

\noindent \textbf{Case 2:} Suppose  $\max_O \{ |I(O,S)| + |I(O,T)| \} > n.$

Assume $n$ is even and $\max_O \{ |I(O,S)| + |I(O,T)|\} =n + 2k$ for $k$ an integer such that $1 \leq k \leq \frac{n}{2}$. By the result of Lemma \ref{biglem},  \[ |P(T)-P(S)| \leq \lfloor \frac{n^2}{2} \rfloor - 2k^2. \] We obtain
\[|P(T')-P(S')|\leq |P(T)-P(S)|+|I(O,S)|+|I(O,T)|\leq\left\lfloor \frac{n^2}{2} \right\rfloor - 2k^2 + n + 2k \leq \left\lfloor \frac{(n+1)^2}{2} \right\rfloor. \]

Assume $n$ is odd and $\max_O \{|I(O,S)| + |I(O,T)|\} = n +1+ 2k$ for $k$ an integer such that $1 \leq k \leq \frac{n}{2}$. By the result of Lemma \ref{biglem}, $\max_O\{|I(O,S)| + |I(O,T)|\} = n + 1+2k $ implies $|P(T)-P(S)| \leq \lfloor \frac{n^2}{2} \rfloor - 2k^2-2k$. We obtain
\[ |P(T')-P(S')|\leq |P(T)-P(S)|+|I(O,S)|+|I(O,T)| \leq\left\lfloor \frac{n^2}{2} \right\rfloor - 2k^2 -2k+ n + 1+ 2k \leq \left\lfloor \frac{(n+1)^2}{2} \right\rfloor. \]

\end{proof}

We now demonstrate that this upper bound on the width of an $n$-skein relation is the best possible.  Let $<12 \ldots n>$ denote the skein relation for the crossing in which the $k^{th}$ highest strand is immediately clockwise of the $({k-1})^{st}$ highest strand for $2 \leq k \leq n$.

\begin{corollary}For $n\geq 2$, $w(<12 \ldots n>) = \left\lfloor \frac{n^2}{2} \right \rfloor$. 
\end{corollary}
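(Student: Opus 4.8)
The plan is to show that the specific crossing $\langle 12\ldots n\rangle$ achieves the upper bound of Theorem~\ref{Upper Bound}, so that combined with that theorem we get equality. By Theorem~\ref{Upper Bound} we already know $w(\langle 12\ldots n\rangle)\leq \lfloor n^2/2\rfloor$, so the entire content of the corollary is the reverse inequality: we must exhibit two terms $S$ and $T$ in this skein relation whose powers differ by exactly $\lfloor n^2/2\rfloor$. I would proceed by induction on $n$, mirroring the inductive construction of skein relations (adding an overstrand on top) that drives the rest of the section.

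For the base case I would cite the known computations for $n=2,3,4$ from \citep{Kauffman87, Triple, Quadruple}, where the maximal width $\lfloor n^2/2\rfloor$ is realized by exactly this ``staircase'' crossing. For the inductive step, the key observation is that the crossing $\langle 12\ldots(n+1)\rangle$ is obtained from $\langle 12\ldots n\rangle$ by adding a single overstrand $O$ placed so that it sits immediately clockwise of the previous top strand — precisely the configuration that maximizes the number of intersections an overstrand can have with the relevant splits. The strategy is to track a particular pair of extremal terms. I would take $S$ and $T$ to be the two terms in the $n$-skein relation realizing the width $\lfloor n^2/2\rfloor$ (so $|P(T)-P(S)| = \lfloor n^2/2\rfloor$ with, say, $P(T)\geq P(S)$), and argue that the overstrand $O$ can be positioned to intersect the splits of both $S$ and $T$ in the maximum possible number of arcs. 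Resolving every intersection of $O$ with $T$ as an A-split and every intersection with $S$ as a B-split then increases the power difference by exactly $|I(O,S)|+|I(O,T)|$, yielding new terms $S'$ and $T'$ with
$$|P(T')-P(S')| = |P(T)-P(S)| + |I(O,S)|+|I(O,T)|.$$

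The main obstacle — and the step requiring the most care — is verifying that for the staircase crossing the overstrand can simultaneously realize a large number of intersections with \emph{both} extremal splits, and computing that this increment is exactly $\lfloor (n+1)^2/2\rfloor - \lfloor n^2/2\rfloor$, i.e. $n$ when $n$ is even and $n+1$ when $n$ is odd. Here I would need to examine the geometry of the parallel (or near-parallel) splits that realize the width in the staircase case: the width-maximizing terms for $\langle 12\ldots n\rangle$ should be arranged so that a single new overstrand, placed in the staircase position, crosses all $n$ arcs of one extremal split and correspondingly many of the other, matching the $\max_O\{|I(O,S)|+|I(O,T)|\}$ appearing in Lemma~\ref{biglem}. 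Once this intersection count is confirmed to equal the jump in $\lfloor n^2/2\rfloor$, the inductive step closes: the lower bound grows by exactly the right amount at each stage, so $w(\langle 12\ldots n\rangle)\geq \lfloor n^2/2\rfloor$, and together with Theorem~\ref{Upper Bound} this forces equality.
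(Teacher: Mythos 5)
Your outline follows the same broad strategy as the paper's proof (induction on $n$, base cases from the literature, a lower bound obtained by tracking an explicit pair of terms whose power gap grows by $\left\lfloor \frac{(n+1)^2}{2}\right\rfloor - \left\lfloor \frac{n^2}{2}\right\rfloor$ at each stage), but it has a genuine gap at exactly the step you yourself flag as ``the main obstacle'': you never establish that the staircase overstrand meets the extremal splits in the required number of arcs, and, as formulated, your induction \emph{cannot} establish it. Your inductive hypothesis records only that some pair $S,T$ realizes $|P(T)-P(S)|=\lfloor n^2/2\rfloor$; this says nothing about where the arcs of those splits lie relative to the position at which the next overstrand is inserted, which is precisely what determines $|I(O,S)|$ and $|I(O,T)|$. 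To close the induction one must carry a stronger invariant through it. That is what the paper does: it proves inductively that the lowest power $l(<12\ldots n>)$ is realized by a \emph{unique} term $T_n$, that this term is the parallel split, and that the overstrand $O_n$ creating $<12\ldots n+1>$ crosses all $n$ of its arcs. Resolving all of these intersections as B-splits then lowers the bottom of the relation by exactly $n$ (uniqueness is what guarantees no other term can dip lower), and the invariant reproduces itself: the new lowest term is again the unique parallel split, crossed $n+1$ times by $O_{n+1}$. Without this uniqueness-plus-position statement, your assertion that ``the width-maximizing terms should be arranged so that\dots'' is an unproved claim rather than a consequence of the hypothesis.

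A second, smaller point: requiring $O$ to meet \emph{both} extremal splits maximally is stronger than needed and, on the high side, cannot mean what it suggests. By Lemma \ref{biglem}, any pair realizing width $\lfloor n^2/2\rfloor$ satisfies $\max_{O}\{|I(O,S)|+|I(O,T)|\}\leq n$ when $n$ is even (and $\leq n+1$ when $n$ is odd), so once the low split absorbs $n$ intersections the high split is met $0$ times (resp. at most once). The paper accordingly treats the top asymmetrically and very cheaply: for $n$ odd it uses only the parity observation that an overstrand crosses any split of an odd crossing an odd number of times, hence at least once, giving $h(<12\ldots n+1>)\geq h(<12\ldots n>)+1$; for $n$ even it claims only $h(<12\ldots n+1>)\geq h(<12\ldots n>)$. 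If you repair your induction by adding the paper's invariant for the bottom term and replace ``correspondingly many'' by this parity argument on top, your sketch becomes the paper's proof.
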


\begin{proof}

	For convenience, we will denote the lowest power of $A$ appearing in the skein relation for $<12 \ldots n>$ by $l(<12 \ldots n>)$. We denote the highest power of $A$ appearing in the skein relation by $h(<12 \ldots n>)$. \\
    
	We will use an inductive argument to show that $l(<12 \ldots n+1>) = l(<12 \ldots n>) - n$. We refer to the overstrand that transforms $<12 \ldots n>$ into $<12 \ldots n+1>$ as $O_n$ \\
    
   \medskip

\noindent \textbf{Base Case:} For $<12>$, the lowest power of $A$ is realized by exactly one term, $T_2$: a parallel split that realizes $|I(O_2, T_2)| = 2$. \\
    
   \medskip

\noindent  \textbf{Induction:} Assume that there is only one term on the lowest power of $<12 \ldots n>$, a term $T_n$ such that $|I(O_n, T_n)| = n$. An overstrand placed over $<12 \ldots n>$ can intersect at most $n$ arcs per term. So, if $T_n$ had the unique lowest power of $A$ in $<12 \ldots n>$, the term obtained from performing all B-splits on this term will be the only term on the lowest level of the skein relation $<12 \ldots n+1>$. The power of this term is exactly $n$ lower than $P(T_n)$. Moreover, the new split is in fact the parallel split that realizes $n+1$ intersections with the overstrand $O_{n+1}$. \\
    
    \begin{figure}[H]
        \centering
        \begin{subfigure}[b]{0.2\textwidth}
                \centering
                \includegraphics[width=\textwidth]{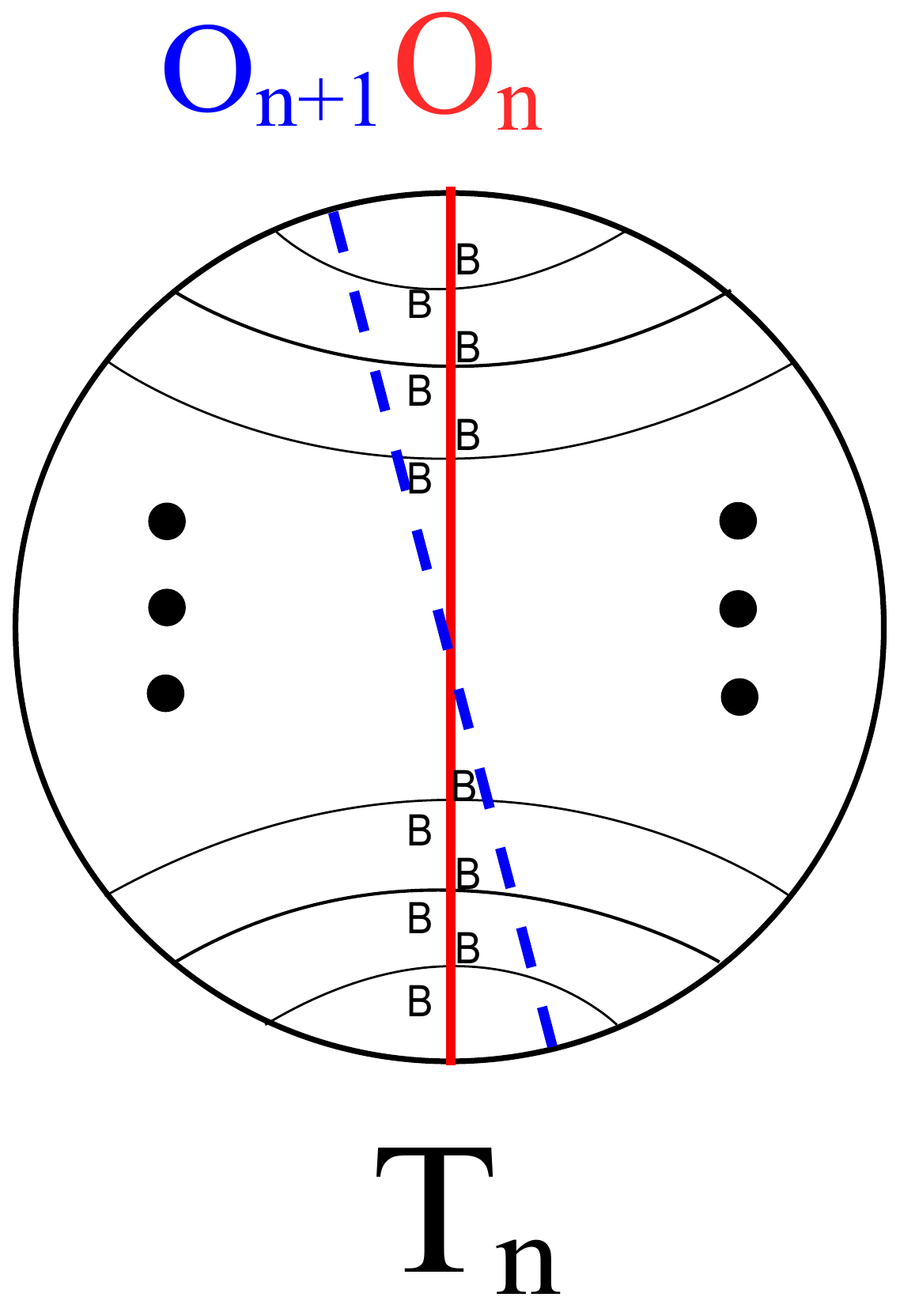}
                \caption{ }
                \label{fig:Tn}
        \end{subfigure}
        \begin{subfigure}[b]{0.2\textwidth}
                \centering
                \includegraphics[width=\textwidth]{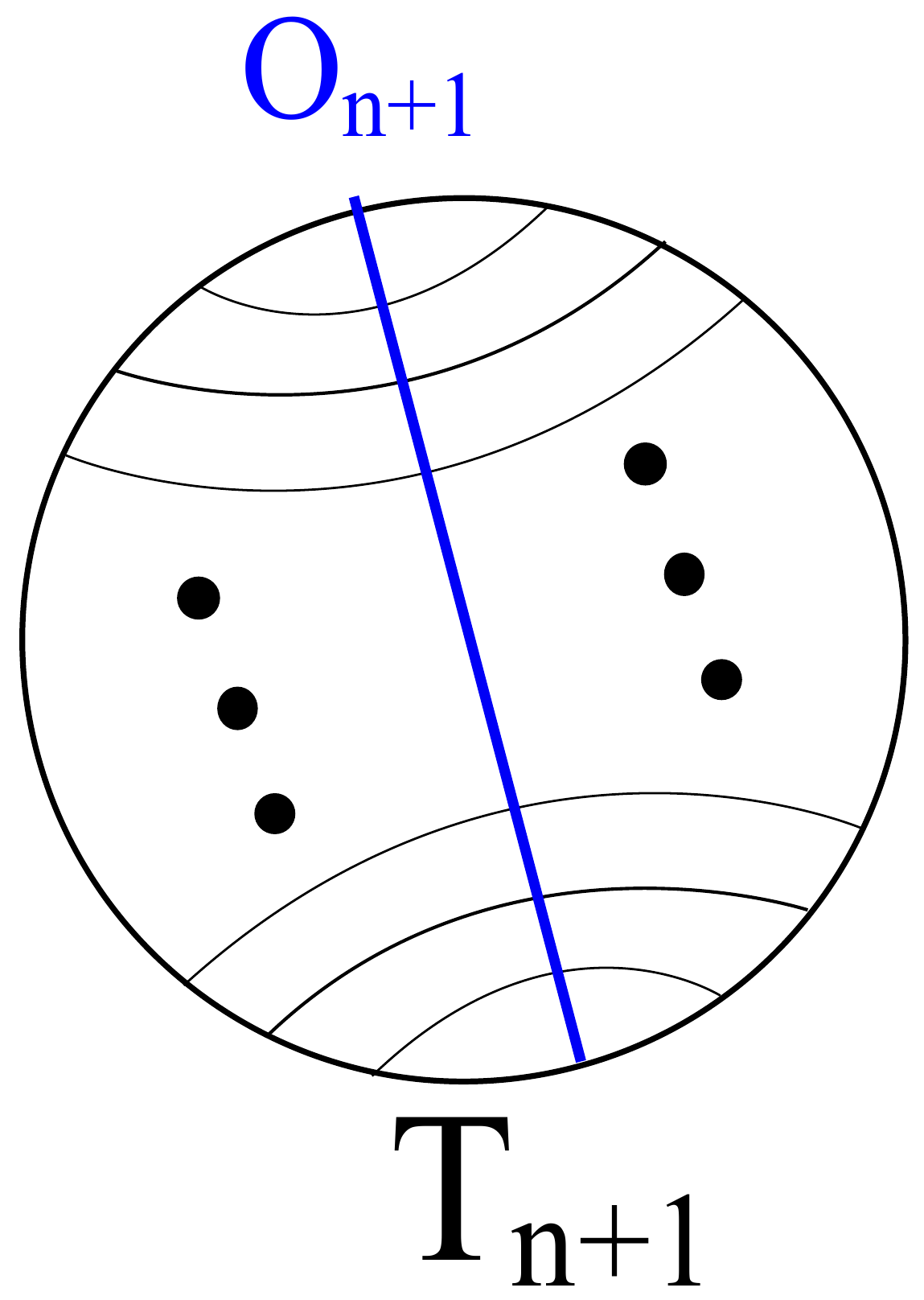}
                \caption{ }
                \label{fig:Tn+1}
        \end{subfigure}
        \caption{(a) $T_n$ with $O_n$, $O_{n+1}$ and $B$ regions marked (b) The result of performing all $B$ splits on the intersections of $O_n$ with $I(O_n, T_n)$} \label{fig:tightpic}
\end{figure}
  
  We have shown that $l(<12 \ldots n+1>)  = l(<12 \ldots n>) - n$. Using this fact, we perform a second inductive argument to show that 
  
  \[ w(<12 \ldots n>) \geq \left\lfloor \frac{n^2}{2} \right\rfloor \Longrightarrow w(<12 \ldots n+1>) \geq \left\lfloor \frac{(n+1)^2}{2} \right\rfloor \] 
    
   \medskip

\noindent \textbf{Base Case:} Our base case comes from the observation that $w(<12>) \geq \left\lfloor \frac{2^2}{2} \right\rfloor = 2$.
    
   \medskip

\noindent \textbf{Induction:} Assume $w(<12 \ldots n>) \geq \left\lfloor \frac{n^2}{2} \right\rfloor$. 
    
    First, we observe that any overstrand intersects an odd number of arcs when $n$ is odd. So, when $n$ is odd, we can perform at least one $A$ split on a term that realizes the highest power of $A$, resulting in  $$h(<12 \ldots n+1>) \geq h(<12 \ldots n>) + 1$$ 

    When $n$ is even, we can still assert that $h(<12 \ldots n+1>) \geq h(<12 \ldots n>)$ \\
    
    We have
    \begin{enumerate}
    \item \hspace{1cm} $\displaystyle{h(<12 \ldots n+1>) \geq h(<12 \ldots n>) + \begin{cases} 1\ \textrm{when $n$ is odd} \\
    0 \ \textrm{when $n$ is even}
    \end{cases} }$\\
    \item \hspace{1cm} $\displaystyle{l(<12 \ldots n+1>) = l(<12 \ldots n>) - n.}$
    \end{enumerate}
    
    \medskip
    
   Noting that $w(<12 \ldots n>) = h(<12 \ldots n>) - l(<12 \ldots n>)$, we subtract (2) from (1) to obtain 
    \begin{center}
 \begin{eqnarray*}   
w(<12 \ldots n+1>) =  w(<12 \ldots n+1>) + \begin{cases}
n+1 \ \textrm{when $n$ is odd} \\
n \ \textrm{when $n$ is even}
\end{cases} \\
w(<12 \ldots n+1>)\geq \left\lfloor \frac{n^2}{2} \right \rfloor + \begin{cases}
n+1 \ \textrm{when $n$ is odd} \\
n \ \textrm{when $n$ is even}
\end{cases}\\
w(<12 \ldots n+1>) \geq \left\lfloor \frac{(n+1)^2}{2} \right \rfloor
\end{eqnarray*}
\end{center}

This completes our second inductive proof, showing that for all $n$, $w(<12 \ldots n>) \geq \left\lfloor \frac{n^2}{2} \right\rfloor$. \\

By Theorem \ref{Upper Bound}, $w(<12 \ldots n>) \leq \left\lfloor \frac{n^2}{2} \right\rfloor$. So, $w(<12 \ldots n>) = \left\lfloor \frac{n^2}{2} \right\rfloor$.
 
\end{proof}


\section{Bound on the number of components}\label{components}

In this section we aim to prove the following theorem about the number of  components in maximal and minimal states.

\begin{theorem} \label{componentsthm}
Let $n \geq 3$ and let $K$ be a connected multi-crossing projection with $c_n$ $n$-crossings. Let $s_{max}$ be a maximal state and $s_{min}$ be a minimal state. Then $$|s_{max}|+|s_{min}| \le (2n-4)c_n+2.$$
\end{theorem}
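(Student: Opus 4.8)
The plan is to realize $s_{min}$ and $s_{max}$ as the two ends of a single cobordism and read the bound off from an Euler characteristic count. Because $K$ is connected, any two states agree along the edges of the underlying diagram $D$ and differ only inside the $c_n$ crossing disks: at the $i$th crossing $s_{max}$ uses the high split $H_i$ while $s_{min}$ uses the low split $L_i$. First I would build a cobordism $C$ from the $1$-manifold $s_{min}$ to the $1$-manifold $s_{max}$ that is a product (a union of annuli) over the edges of $D$ and, inside each crossing disk, is the trace of a shortest sequence of saddle (split) moves carrying $L_i$ to $H_i$. Capping every boundary circle of $C$ with a disk yields a closed surface $\widehat C$ with
\[
\chi(\widehat C) = |s_{max}| + |s_{min}| - b, \qquad b = \sum_{i=1}^{c_n} b_i,
\]
where $b_i$ is the number of saddles used at the $i$th crossing.

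The two quantities to control are then $b$ and $\chi(\widehat C)$. For $b$, superimposing $L_i$ and $H_i$ inside the $i$th disk produces some number $\ell_i$ of closed loops, each alternating between arcs of $L_i$ and arcs of $H_i$, and the minimal number of saddles converting one non-crossing matching of the $2n$ endpoints into another is exactly $n-\ell_i$ (a loop built from $2k$ arcs costs $k-1$ saddles, and the total arc count is $n$). Since the skein relation has positive width for $n\ge 2$ we have $H_i\ne L_i$, so $\ell_i\le n-1$ and hence $1\le b_i = n-\ell_i\le n-1\le 2n-4$ for $n\ge 3$; summing gives $b\le (2n-4)c_n$. For $\chi$, a \emph{connected} closed surface satisfies $\chi(\widehat C)\le 2$ regardless of orientability, so once $\widehat C$ is known to be connected we obtain $|s_{max}|+|s_{min}| = \chi(\widehat C)+b \le 2+(2n-4)c_n$, which is the claim. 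I should also verify separately the degenerate arcs where $H_i$ and $L_i$ coincide (contributing product annuli and no saddle), since they do not help connect the surface.

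The \textbf{main obstacle} is establishing that $\widehat C$ is connected, and this is exactly where the connectivity hypothesis on $K$ must really be used; it can genuinely fail if the saddles chosen at a crossing do not link together the strands passing through it (indeed, were the cobordism a disjoint union of annuli the count would break down). I would address this by selecting the saddle movie at each crossing so that its trace is connected and meets as many of the through-strands as the $b_i\ge 1$ saddles allow, and then propagating connectivity of $D$ across these traces; a robust alternative is to induct on $c_n$, resolving one crossing at a time and tracking the change in $|s_{max}|+|s_{min}|$ against the change in $b$, with base case the single unknotted circle ($c_n=0$, giving $1+1=2$). It is worth noting that the cobordism argument in fact yields the sharper bound $(n-1)c_n+2$, and the stated $(2n-4)c_n+2$ is its weaker consequence for $n\ge 3$; if connectivity proves hard to guarantee in general, falling back to the per-component estimate $\chi(\widehat C)\le 2p$ (with $p$ the number of components of $\widehat C$) and bounding $p$ against $b$ is the natural way to recover the weaker, but sufficient, constant.
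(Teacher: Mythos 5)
Your Euler-characteristic bookkeeping is correct as far as it goes: with geodesic saddle movies one has $\chi(\widehat C)=|s_{max}|+|s_{min}|-b$ and $b_i=d(H_i,L_i)=n-\ell_i$, which matches the paper's Lemma \ref{sd}. The fatal gap is the connectivity of $\widehat C$: this is not a technical obstacle to be engineered away, it is false, and your proposed remedies cannot work. For a geodesic movie the trace inside the $i$th crossing disk has exactly $\ell_i=n-b_i$ components, one for each loop of the superimposition $H_i\cup L_i$; so it is connected only when $d(H_i,L_i)=n-1$, and when $d(H_i,L_i)<n-1$ there is \emph{no} choice of $b_i$ saddles whose trace is connected, contrary to your suggestion to ``select the saddle movie so that its trace is connected.'' Whether the pieces get joined is then decided entirely by the exterior of the diagram, and connectivity of the projection does not suffice. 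Concretely, take $n=4$, $c_n=1$, interior splits $U=\{12,34,56,78\}$ and $V=\{18,27,36,45\}$ (so $U\cup V$ has the two loops $\{1,2,7,8\}$ and $\{3,4,5,6\}$, $d(U,V)=2$), and exterior split $T=\{18,27,36,45\}$; this is a connected two-component link projection, yet every edge of the diagram joins endpoints on the same loop, so $C$ is disconnected and $\widehat C$ is two spheres, while $\|U,T\|+\|V,T\|=4+2=6>2+d(U,V)=4$. Since your argument uses no property of high and low splits beyond $H_i\neq L_i$, if it were valid it would prove the bound $2+\sum_i d(U_i,V_i)$ for \emph{any} pair of distinct splits, which this example refutes. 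The same example explains why your claimed sharper bound $(n-1)c_n+2$ should have been a warning sign: the paper's constant $(2n-4)c_n+2$ equals $2n-2$ when $c_n=1$, exactly the value realized by such ``exterior on a geodesic between $H$ and $L$ with $d(H,L)=2$'' configurations, which your bound ($n+1$ for $n\ge4$) would forbid.

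Your fallback route ($\chi(\widehat C)\le 2p$) can be made rigorous, but it then needs the one ingredient your proposal never supplies, and that ingredient is the heart of the paper's proof. With geodesic movies, splitting each crossing-vertex of the connected diagram into its $\ell_i$ loop-components gives $p\le 1+\sum_i(\ell_i-1)$, hence $|s_{max}|+|s_{min}|\le 2p+b\le 2+\sum_i\bigl(2n-2-b_i\bigr)$. Note that the inequality on $b_i$ now points the opposite way from your main route: you must bound $b_i=d(H_i,L_i)$ from \emph{below}, and $b_i\ge 1$ (all that $H_i\neq L_i$ yields) gives only $(2n-3)c_n+2$. Reaching $(2n-4)c_n+2$ requires $d(H_i,L_i)\ge 2$ for every high split and every low split, which is precisely the paper's Lemma \ref{greaterthanone}; its proof, via the mother/offspring construction and the clockwise/counterclockwise/straight classification of splits (Lemmas \ref{rulesoflife} through \ref{nomaxmin}), occupies most of Section \ref{components}. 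Even the fact that no split is simultaneously high and low is not free --- high and low are extremes of the partial order, not of the exponent, so ``positive width'' does not imply it; that is the paper's Lemma \ref{nomaxmin}. In short, your cobordism framing is an attractive repackaging of the component count (its working, per-component version is essentially the paper's induction in Lemma \ref{general_components_theorem} combined with Lemmas \ref{sd} and \ref{sdvsnc}), but as proposed the main route rests on a false connectivity claim, and the fallback, once repaired, still requires proving the split-distance lower bound $d(H,L)\ge 2$ that constitutes the real content of the theorem.
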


We  reduce the proof of Theorem~\ref{componentsthm} to finding a lower bound for the split distance between any high and low split. Lemmas \ref{sd} and \ref{sdvsnc} give the crucial connection between the split distance between two splits and the total number of components formed when we replace a given crossing with each split.  When we replace a crossing with a split to calculate the number of connected components, we call the split we use the \textit{interior split} $S$. Imagine the interior split being bounded by an ``invisible" circle $C$, as shown in Figure \ref{inside_outside}. When we follow the knot away from one of the endpoints on $C$, we  eventually return to a different endpoint on $C$. Connect all such pairs of endpoints by a collection of disjoint arcs outside $C$. We call this connection of endpoints  the \textit{exterior split} $T$. Let $||S,T||$ denote the number of topological circles formed by connecting the interior split $S$ with the exterior split $T$. The split distance $d(S,T)$ is defined to be the split distance between $S$ and the reflection of $T$ through $C$.

\begin{figure}[H]
        \centering
        \begin{subfigure}[b]{0.3\textwidth}
                \centering
                \includegraphics[width=\textwidth]{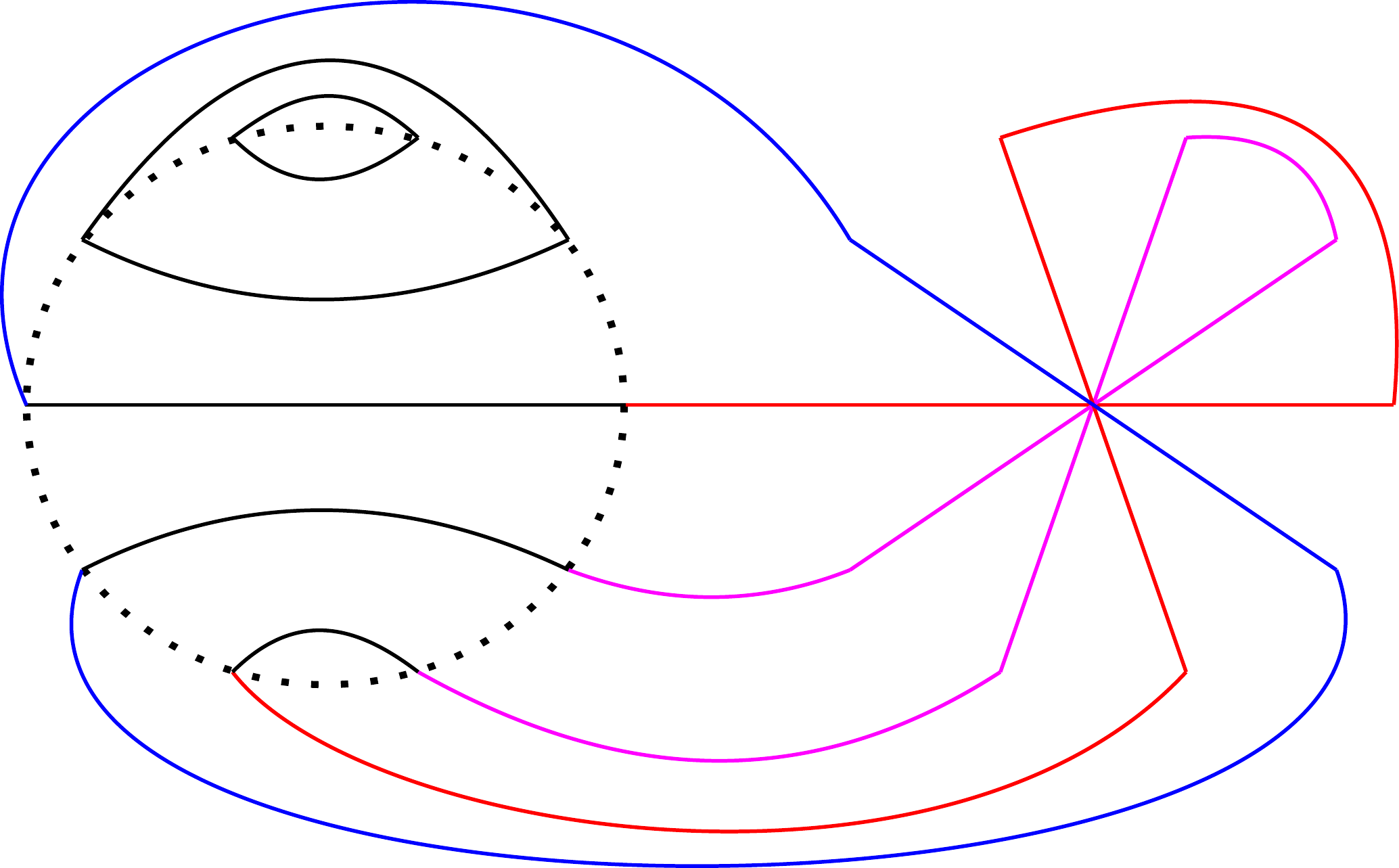}
                \label{fig:getextsplit1}
        \end{subfigure}
\qquad
        \begin{subfigure}[b]{0.15\textwidth}
                \centering
                \includegraphics[width=\textwidth]{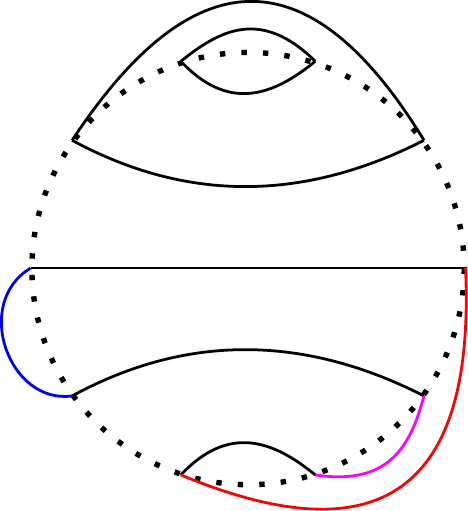}
                \label{fig:getextsplit2}
        \end{subfigure}
        \caption{Constructing the exterior split. In this instance $||S,T||=3$.} \label{fig:extsplit}
\end{figure}

\begin{lemma}
\label{sd}
Let $x$ be a crossing in an $n$-crossing projection with $n \geq 2$, and 
let $S$ be an interior split of $x$ and $T$  the corresponding  exterior split. Then
$$d(S,T) = n-||S,T||.$$
\end{lemma}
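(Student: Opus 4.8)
The plan is to recognize that both quantities in the identity are governed entirely by the combinatorics of two non-crossing perfect matchings of the $2n$ boundary points on $C$. Reflecting $T$ through $C$, I regard $S$ and $T$ as two planar matchings of the same $2n$ endpoints, so that $d(S,T)$ is exactly the minimum number of split moves carrying one matching to the other. The quantity $||S,T||$ is the number of closed loops in the superposition formed by drawing $S$ inside $C$ and $T$ outside $C$; since every endpoint meets one arc of $S$ and one arc of $T$, this superposition is a disjoint union of loops. Thus $d(S,T)=n-||S,T||$ becomes a purely combinatorial statement about matchings, and I would establish the two inequalities separately.

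For the lower bound I would record two facts. First, $||S,T|| \le n$, with equality precisely when $S=T$ as matchings: each loop alternates interior and exterior arcs and so contains at least one arc of each matching, giving at most $n$ loops, and exactly $n$ loops forces every loop to be a bigon, i.e.\ $S$ and $T$ pair the same endpoints. Second --- and this is already asserted in the text just after the definition of a split move --- a single split move changes the number of loops by exactly $\pm 1$: when the two reconnected arcs lie on the same loop the move splits it ($+1$), and when they lie on different loops it merges them ($-1$). Since any sequence realizing $d(S,T)$ begins with superposition count $||S,T||$ and ends at $S=T$ with count $n$, and each move changes the count by at most one, at least $n-||S,T||$ moves are needed.

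For the upper bound I would argue by induction that whenever $S \neq T$ there is a single split move on $S$ that \emph{increases} $||S,T||$ by one; iterating then reaches $S=T$ in exactly $n-||S,T||$ steps. To produce such a move, take a short arc of the exterior matching $T$, that is, an arc $(p,p{+}1)$ joining two cyclically adjacent endpoints, which always exists for a non-crossing matching. If $S$ does not also contain $(p,p{+}1)$, then the two arcs of $S$ meeting $p$ and $p{+}1$ bound the empty boundary region between $p$ and $p{+}1$, so they share a face and a split move on them is legal; because the $T$-arc $(p,p{+}1)$ joins these two $S$-arcs on one common loop, the move splits off a bigon and raises $||S,T||$ by one. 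If instead $S$ also contains $(p,p{+}1)$, then that pair forms an isolated bigon, and I delete $p$ and $p{+}1$ to obtain non-crossing matchings on $2n-2$ points with loop count $||S,T||-1$; applying the inductive hypothesis and lifting the resulting move back (the deleted short arc bounds an empty region and does not obstruct the face used) gives the desired increasing move.

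The main obstacle is exactly the existence of this loop-splitting move, i.e.\ ruling out the degenerate case in which every short arc of $T$ coincides with an arc of $S$. The short-arc reduction resolves it: each reduction strictly decreases $n$ while preserving $S \neq T$, so the process terminates in the non-degenerate situation where $S$ lacks the chosen short arc, and the peeled-off bigons never interfere with the faces along which later moves are performed. The only remaining point requiring care is verifying that the two behaviors of a split move on the superposition are genuinely exhaustive --- same loop versus different loops --- which follows from the standard surgery analysis at the reconnected arcs and is consistent with the $\pm 1$ statement already recorded in the text.
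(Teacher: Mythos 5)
Your proposal is correct and follows essentially the same route as the paper: the lower bound via the fact that each split move changes the loop count of the superposition by at most one, and the upper bound by iteratively performing split moves on $S$ guided by innermost arcs of $T$ (your ``short arcs'' playing the role of the paper's non-nesting arcs, and your bigon-deletion reduction playing the role of the paper's step of ignoring common length-two components), each move splitting off a bigon and raising the count by one. The only difference is bookkeeping --- you track the loop count directly while the paper tracks component lengths $2k_i$ and counts $k_i-1$ moves per component --- which yields the same total of $n-||S,T||$.
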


\begin{proof}
Both $S$ and $T$ have $n$ arcs. For the duration of this proof, we define the length of a connected component to be the number of times the connected component intersects the ``invisible" circle $C$ separating both splits. Assume each of the $||S,T||$ connected components is $2k_i$ arcs long such that $1 \leq i \leq ||S,T||$, so that  $\sum^{||S,T||}_{i=1} k_i=n$.

\begin{figure}[h]
\begin{center}
\includegraphics[width=0.4\columnwidth,angle=0] {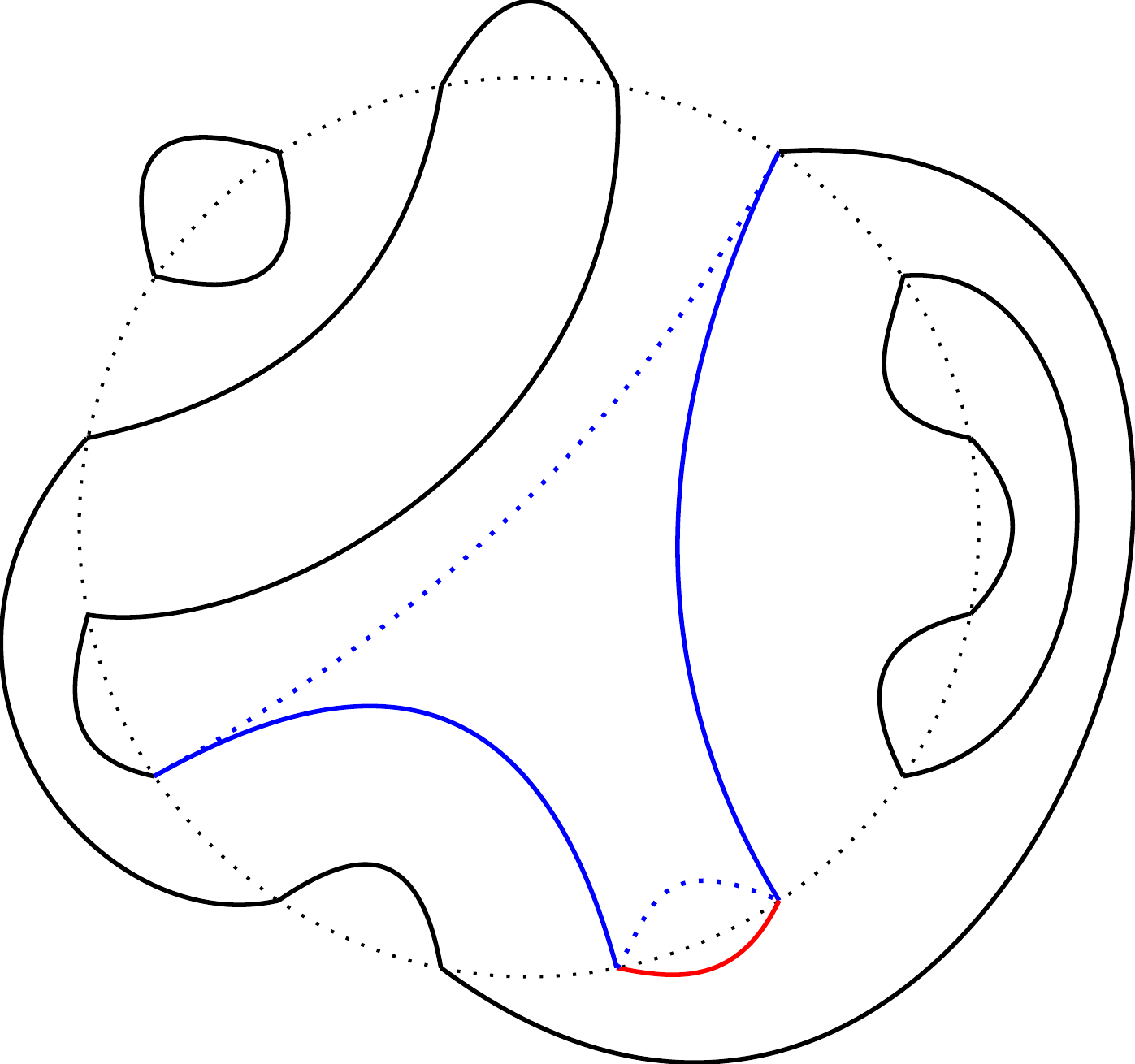} 
\caption{The red arc in $T$ is a non-nesting arc. Performing a split move on the blue arcs in $S$ changes a component of length $10=2(5)$ into two components of length $8=2(5-1)$ and 2.\label{inside_outside} }
\end{center}
\end{figure}

We want to perform a number of split moves on $S$ until the number of connected components is $n$ and each connected component has a length of two. In other words, we perform split moves until the interior split $S$ has turned into $T$. When performing split moves, we ignore any connected components of length 2 that are formed by common arcs in $S$ and $T$. Instead, choose a non-nesting arc in $T$, which is to say, an arc such that there exists a disk  bounded by that arc and an arc on the invisible circle that contains no other arcs of $T$ other than those we are ignoring. We perform a split move on the two distinct arcs in $S$ sharing the endpoints. Note that there at are least two non-nested arcs in $T$. Performing the split move changes a component of length $2k_i$ into two components of length $2(k_i-1)$ and 2, as shown in Figure \ref{inside_outside}. Continue the process until all components have length 2, i.e. the interior split $S$ has turned into $T$. Inducting on $k_i$, each length $2k_i$ component has required $k_i-1$ split moves in total. So the split distance between $S$ and $T$ satisfies $$d(S,T) \leq \sum^{||S,T||}_{i=1} (k_i-1)=\sum^{||S,T||}_{i=1} k_i-||S,T||=n-||S,T||.$$ 

We can find a lower bound on $d(S,T)$ by considering that each split move can increase the number of components by at most 1. Therefore, given a fixed exterior split, changing from an interior split that creates $n$ components to an interior split that creates $||S,T||$ components requires at least $n-||S,T||$ split moves, i.e. $d(S,T) \geq n-||S,T||$. Thus, $$d(S,T) = n-||S,T||.$$
\end{proof}

In the case that the knot or link has a single $n$-crossing, $||S,T||$ is the number of components of the state corresponding to the split $S$. However, in knot projections with more than one multi-crossing, a state consists of more than one split, as each multi-crossing is split in a specific way. We need to know the number of components that result when all crossing are split. However, for induction we will consider splitting one crossing at a time.  Split one crossing of the knot. The resultant projection is a link. We call a topological circle in this projection a link component. We define a projection component to be a connected component of the projection when it is considered as an object on the plane. In other words, all link components that pass through the same multi-crossing are part of the same projection component.  Each projection component occupies a disjoint region in the plane. Clearly, the number of link components is always greater than or equal to the number of projection components since each projection component contains at least one link component. 

\begin{figure}[H]
        \centering
        \begin{subfigure}[b]{0.3\textwidth}
                \centering
                \includegraphics[width=\textwidth]{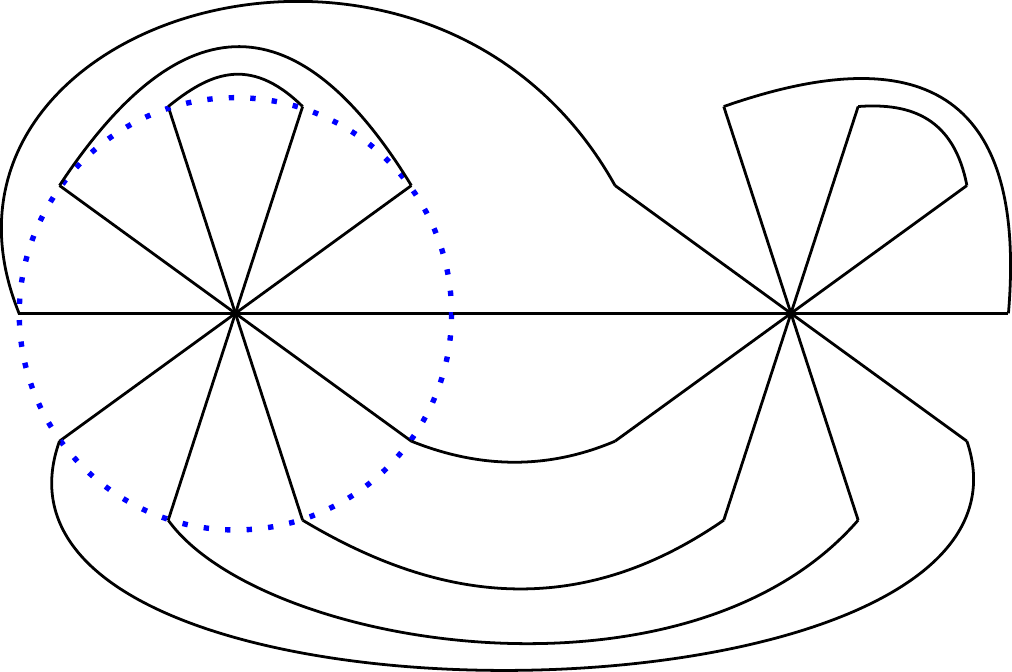}
                \caption{}
                \label{fig:split before}
        \end{subfigure}
        \begin{subfigure}[b]{0.3\textwidth}
                \centering
                \includegraphics[width=\textwidth]{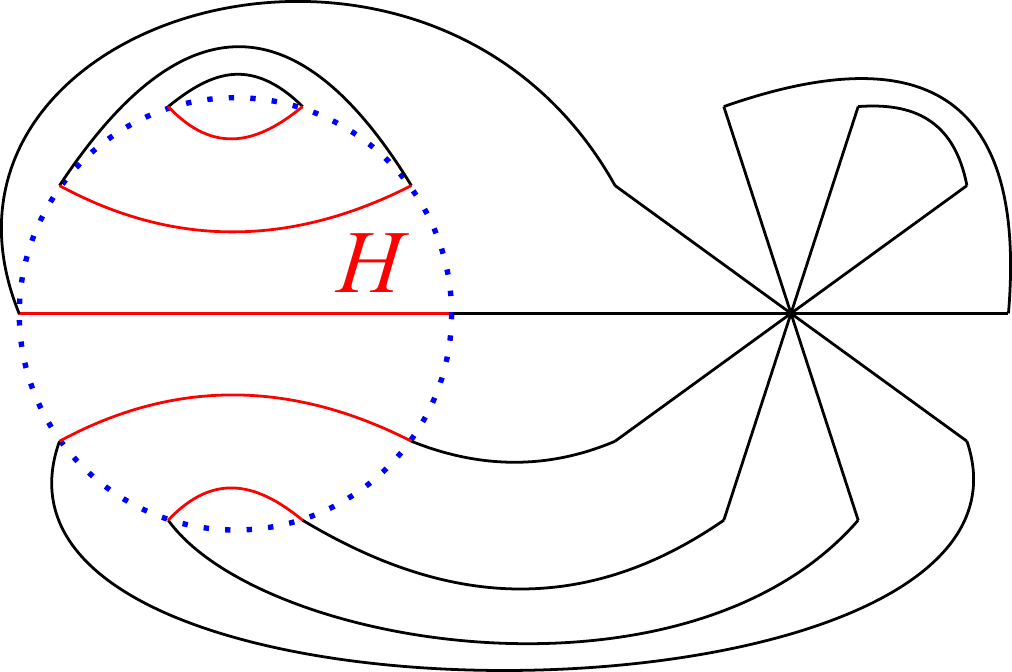}
                \caption{}
                \label{fig:split after}
        \end{subfigure}
\begin{subfigure}[b]{0.3\textwidth}
                \centering
                \includegraphics[width=\textwidth]{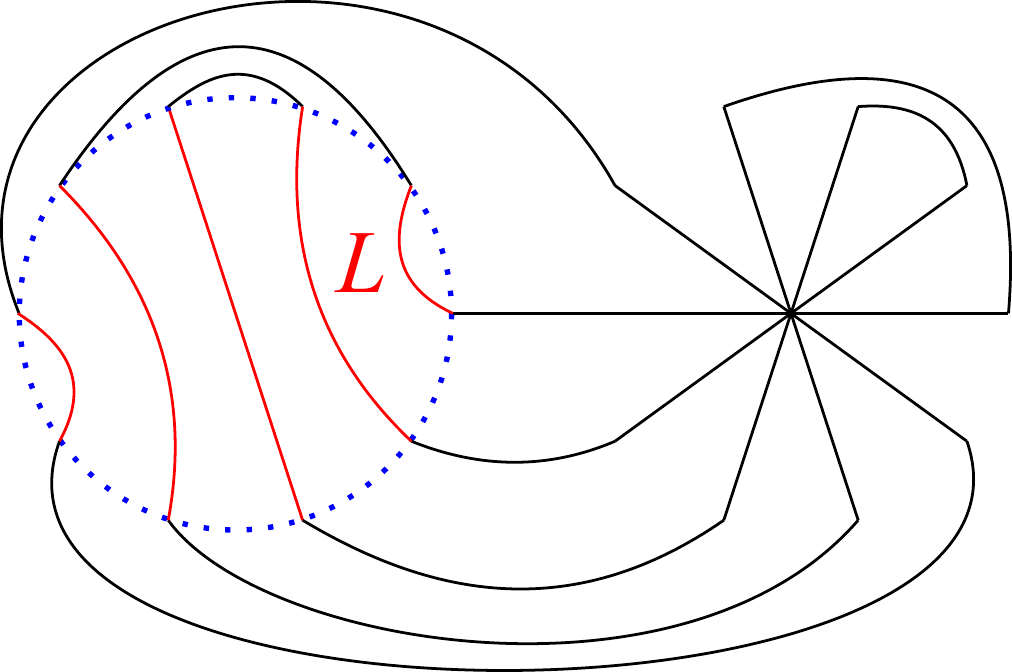}
                \caption{}
                \label{fig:split after 2}
        \end{subfigure}
        \caption{(a) A projection with two 5-crossings. (b) The result of splitting a 5-crossing as $H$ has 3 link components and 3 projection components. (c) The result of splitting the 5-crossing as $L$ has 3 link components and 1 projection component.} \label{fig: split at crossing}
\end{figure}

\begin{lemma}
\label{sdvsnc}
Let $n \geq 2$ and let $U$ and $V$ be two different splits at a specified $n$-crossing $x$ in a projection $P$, that split $P$ into $k_U$ and $k_V$ projection components. Then $k_U+k_V\le 2n-d(U,V)$.
\end{lemma}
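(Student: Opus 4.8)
The plan is to reduce from projection components to link components, for which the count is a clean ``number of cycles in a union of two matchings'' quantity, and then to recognize the split distance $d(U,V)$ as exactly such a cycle count via Lemma~\ref{sd}.

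First I would set up the combinatorial model. Excise a small disk $D$ around $x$; the circle $C=\partial D$ meets $P$ in $2n$ endpoints, and the splits $U,V$ are two planar perfect matchings $\mu_U,\mu_V$ of these endpoints. Tracing each strand straight through all the remaining crossings of the exterior $P\setminus D$ pairs the $2n$ endpoints into a perfect matching $\pi$ (the \emph{exterior pairing}), which need not be planar since exterior strands may cross. Assuming $P$ is connected (as in Theorem~\ref{componentsthm}, and as is genuinely necessary: a floating exterior loop would be counted in both $k_U$ and $k_V$ and break the bound), every link component of the diagram obtained by splitting $x$ with $U$ meets $C$ and is therefore precisely a cycle of $\mu_U\cup\pi$. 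Thus the number of link components is $\ell_U=\operatorname{cyc}(\mu_U\cup\pi)$, and likewise $\ell_V=\operatorname{cyc}(\mu_V\cup\pi)$, where $\operatorname{cyc}$ denotes the number of cycles. Since each projection component contains at least one link component, $k_U\le\ell_U$ and $k_V\le\ell_V$, so it suffices to prove $\ell_U+\ell_V\le 2n-d(U,V)$.

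Next I would introduce a swap distance on matchings. For perfect matchings $M,M'$ of the $2n$ endpoints set $\rho(M,M')=n-\operatorname{cyc}(M\cup M')$. A single swap — replacing two pairs $\{a,b\},\{c,d\}$ of $M$ by one of the two alternative pairings of $\{a,b,c,d\}$ — changes $\operatorname{cyc}(M\cup M')$ by exactly $\pm 1$, just as multiplying a permutation by a transposition does. Since $\operatorname{cyc}(M'\cup M')=n$, the minimum number of swaps turning $M$ into $M'$ equals $n-\operatorname{cyc}(M\cup M')=\rho(M,M')$; being a minimal word length in a symmetric generating set, $\rho$ is a metric, and in particular $\rho(\mu_U,\pi)+\rho(\pi,\mu_V)\ge\rho(\mu_U,\mu_V)$.

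Finally I would identify $\rho$ with $d$ on the planar splits and conclude. Applying Lemma~\ref{sd} with $V$ (reflected through $C$) as the exterior split of $U$ gives $d(U,V)=n-||U,V||=n-\operatorname{cyc}(\mu_U\cup\mu_V)=\rho(\mu_U,\mu_V)$. Substituting into the triangle inequality yields $(n-\ell_U)+(n-\ell_V)\ge d(U,V)$, that is $\ell_U+\ell_V\le 2n-d(U,V)$; combined with $k_U\le\ell_U$ and $k_V\le\ell_V$ this gives the claim. The hard part is conceptual rather than computational: because the exterior pairing $\pi$ may be non-planar, Lemma~\ref{sd} cannot be applied to $\pi$ directly, so the argument hinges on transporting the problem to the swap metric $\rho$ defined on \emph{all} matchings while using only that $\rho$ restricts to the split distance $d$ on the two planar splits $\mu_U,\mu_V$.
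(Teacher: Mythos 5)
Your proposal is correct in substance and, at the level of its skeleton, is the same argument as the paper's: compare both splits to the pairing induced by the exterior of the projection, convert cycle counts into distances via Lemma~\ref{sd}, apply a triangle inequality, and finish with ``projection components $\le$ link components through $C$.'' The genuine difference is where the triangle inequality lives. The paper realizes the exterior pairing as an \emph{exterior split} $T$ drawn with disjoint arcs outside $C$, and then invokes the triangle inequality for the split distance $d$ on the triple $U,T,V$. That tacitly assumes the exterior pairing is planar, which can fail: in the standard diagram of the Hopf link the exterior pairing at either crossing is $\{1,3\},\{2,4\}$, which cannot be drawn with disjoint arcs outside $C$; in that case $T$ is not a split, $d(U,T)$ is undefined, and Lemma~\ref{sd} does not apply to it. Your swap metric $\rho$, defined on \emph{all} perfect matchings and shown to restrict to $d$ on pairs of planar splits via Lemma~\ref{sd}, is exactly the repair: the triangle inequality is applied in a space that contains the exterior pairing $\pi$ whether or not it is planar. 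The paper's route buys brevity; yours buys an argument that actually covers the generality in which the lemma is used, and you are also right that connectivity of $P$ must be assumed (the paper omits it from the statement, though it is in force in Lemma~\ref{general_components_theorem} and Theorem~\ref{componentsthm} where the lemma is applied).

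Two imprecisions in your write-up, neither fatal. First, a swap does not change $\operatorname{cyc}(M\cup M')$ by exactly $\pm 1$: if the two replaced $M$-edges lie on the same cycle of $M\cup M'$, one of the two reconnections splits that cycle ($+1$) but the other leaves it a single cycle ($0$). What you need --- and what is true --- is that each swap increases $\operatorname{cyc}$ by at most $1$, and that whenever $M\neq M'$ some swap increases it by exactly $1$; this still yields ``minimal swap number $= n-\operatorname{cyc}(M\cup M')$'' and hence the metric property of $\rho$. Second, connectivity of $P$ does not force every link component of the split diagram to meet $C$: a link component of $P$ may avoid $x$ entirely while being attached to the rest of the projection at other crossings, so $\ell_U$ can exceed $\operatorname{cyc}(\mu_U\cup\pi)$. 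The inequality you actually need is $k_U\le\operatorname{cyc}(\mu_U\cup\pi)$, and it does follow from connectivity: every projection component of the split diagram contains one of the $2n$ endpoints, hence contains a link component meeting $C$, and such link components are precisely the cycles of $\mu_U\cup\pi$. (The paper's proof makes the same over-identification of $||U,T||$ with the total number of link components, so this slip is inherited rather than introduced, and it is fixed by the same one-line argument.)
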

\begin{proof} Obtain the exterior split of $x$  as described in Figure \ref{fig:extsplit}. We call the exterior split $T$.
We know $d(U,T)=n-||U,T||$, and $d(V,T)=n-||V,T||$. By the triangle inequality,
$$||U,T||+||V,T||=(n-d(U,T))+(n-d(V,T)) \leq 2n-d(U,V).$$
Note that $||U,T||$ is the number of link components corresponding to the $U$ split. Thus $k_U\leq||U,T||$. Likewise, $k_V\leq||V,T||$. We obtain  $k_U+k_V\le 2n-d(U,V)$. 

\end{proof}

\begin{lemma}\label{general_components_theorem}
Let $P$ be a connected projection with $c_n$ $n$-crossings for $n \geq 2$. If $D \le d(H,L)$ for any high split $H$ and low split $L$, i.e. any high and low splits are at least $D$ split moves apart, then $$|s_{max}|+|s_{min}| \le (2n-D-2)c_n+2.$$
\end{lemma}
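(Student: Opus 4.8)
The plan is to prove, by induction on $c_n$, the slightly stronger statement that allows $P$ to be disconnected: if $P$ has $c_n$ $n$-crossings and $p$ projection components, then $|s_{max}|+|s_{min}| \le (2n-D-2)c_n + 2p$. The stated lemma is the case $p=1$, and this strengthening is forced on us because resolving a single crossing typically disconnects the projection. Writing $M(P)=|s_{max}|$ and $m(P)=|s_{min}|$ for brevity, the base case $c_n=0$ is immediate: $P$ is a disjoint union of $p$ circles, so $M(P)=m(P)=p$ and $M(P)+m(P)=2p$.

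For the inductive step I would single out one crossing $x$, with high split $H$ coming from $s_{max}$ and low split $L$ coming from $s_{min}$, where $d(H,L)\ge D$ by hypothesis. Splitting $x$ as $H$ produces a projection $P_H$ with $c_n-1$ crossings and $M(P)=M(P_H)$, since the all-high state of $P$ is exactly the all-high state of $P_H$; symmetrically, splitting $x$ as $L$ gives $P_L$ with $m(P)=m(P_L)$. The difficulty with applying the inductive hypothesis directly is that the maximal state now lives on $P_H$ while the minimal state lives on $P_L$, so the two states decouple onto different reduced diagrams. To keep them coupled I would reduce only one crossing and compare both states on a single reduced diagram, absorbing the discrepancy of the other state at $x$ into one controlled ``swap'': applying the inductive hypothesis to $P_H$ bounds $M(P_H)+m(P_H)$, and I would then relate $m(P)=m(P_L)$ to $m(P_H)$ by the change at the single crossing $x$.

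The quantitative heart is Lemma~\ref{sd}. Fixing the exterior split $T$ of $x$ obtained from the low resolution of the remaining crossings, the two min-states satisfy $m(P_H)=||H,T||+\alpha$ and $m(P_L)=||L,T||+\alpha$, where $\alpha$ counts the circles avoiding $x$; hence the swap costs $m(P_L)-m(P_H)=||L,T||-||H,T||=d(H,T)-d(L,T)$, which by the triangle inequality is at most $d(H,L)$. The same inequality gives the clean bound $||H,T||+||L,T|| = 2n - d(H,T)-d(L,T) \le 2n-d(H,L)\le 2n-D$, the exact-exterior analogue of Lemma~\ref{sdvsnc}. The arithmetic that makes the induction close is the observation that split distances are universally bounded, $D\le d(H,L)\le n-1$, so that $d(H,L)\le n-1\le 2n-D-2$; thus a single swap always fits inside the per-crossing budget $2n-D-2$ whenever the reduction does not create new projection components.

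The step I expect to be the main obstacle is exactly the component bookkeeping when the reduction does create new projection components. Each new projection component contributes $2$ to the target term $2p$, so in the inductive step I must charge $2(p_H-1)$ (or $2(p_L-1)$) for the components created by splitting $x$, in addition to the swap cost, and keep the total within $2n-D-2$ per crossing. Here I would invoke Lemma~\ref{sdvsnc} in the form $p_H+p_L\le 2n-d(H,L)$, both to choose the reduction direction realizing $\min\{p_H,p_L\}$ and to exploit the fact that creating many components forces $d(H,L)$, and hence the swap, to be correspondingly small. Balancing these two competing contributions, newly created components against swap cost, against the single budget $2n-D-2$ is the delicate combinatorial core of the argument; everything else is the routine induction sketched above, and the final specialization to $p=1$ recovers the stated bound $|s_{max}|+|s_{min}|\le (2n-D-2)c_n+2$.
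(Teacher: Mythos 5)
Your skeleton (induction on the number of crossings, reducing at one crossing, with the disconnected strengthening $(2n-D-2)c_n+2p$) is essentially the paper's argument: the paper applies its inductive hypothesis to each projection component of the reduced diagram and sums, which is exactly your $+2p$ bookkeeping. The problem is that the step you defer as ``the delicate combinatorial core'' is the actual content of the proof, and the tools you name for it provably cannot close it. Write $\sigma = m(P_L)-m(P_H)$ for your swap cost and $k_H,k_L$ for the number of pieces into which the component containing $x$ falls under $H$ and $L$. The inductive step requires $2(k_H-1)+\sigma \le 2n-D-2$, i.e.\ $2k_H+\sigma\le 2n-D$. Your two estimates are $\sigma\le d(H,L)$ and $k_H+k_L\le 2n-d(H,L)$ (Lemma~\ref{sdvsnc}); even combined optimally (reduce on the side realizing $\min\{k_H,k_L\}\le n-d(H,L)/2$) they give only $2k_H+\sigma\le\bigl(2n-d(H,L)\bigr)+d(H,L)=2n$, short of the target by exactly $D$. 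Indeed these two inequalities are consistent with the hypothetical configuration $\sigma=d(H,L)=D$, $k_H=k_L=n-D/2$, which violates the budget, so no balancing argument based on them alone can succeed; carried through the induction, the deficit degrades the conclusion to the trivial bound $(2n-2)c_n+2$.

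The missing idea is that $\sigma$ and $k_H$ are linked through the \emph{same} exterior split $T$, with a factor of $2$ per component. Fixing one low resolution of the remaining crossings, $\sigma=||L,T||-||H,T||$; Lemma~\ref{sd} plus the triangle inequality give $||H,T||+||L,T||\le 2n-d(H,L)\le 2n-D$; and each of the $k_H$ new projection components contains an arc of $H$, hence a state circle through $x$, so $k_H\le ||H,T||$. Therefore $\sigma\le (2n-D)-2||H,T||\le 2n-D-2k_H$, and $2(k_H-1)+\sigma\le 2n-D-2$ with nothing to spare: each extra component costs $+2$ but is repaid $-2$ in the swap. This is precisely the paper's estimate $|s_{min}|\le \sum_i |s''_{min,i}|-k_H+(2n-D-k_H)$. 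A secondary gap: your identities $m(P_H)=||H,T||+\alpha$ and $m(P_L)=||L,T||+\alpha$ presuppose that the minimal states of $P_H$ and $P_L$ use the same low splits at the remaining crossings, which fails in general because the power-minimizing choice among low splits depends on the split at $x$; as the paper does implicitly, you should run the induction for arbitrary all-high/all-low states (not just power-extremal ones), so that the hypothesis applies to the $L$-compatible resolution transplanted onto $P_H$.
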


\begin{proof}

\noindent\textbf{Base Case:} First consider $c_n=1$. Let $T$ be the exterior split and $H$,$L$ be a high and a low split. By Lemma \ref{sdvsnc},
$$|s_{max}|+|s_{min}|=||H,T||+||L,T|| \leq 2n-d(H,L) \leq 2n-D.$$ This proves the base case.

\medskip

\noindent \textbf{Induction:}

Assume $|s_{max}|+|s_{min}| \le (2n-D-2)c+2$ for all non-splittable knot projections with $c_n$ $n$-crossings. Now consider any non-splittable $n$-crossing projection $K$ with $c_n + 1$ crossings. We want to show that $|s_{max}| + |s_{min}| \leq (2n -D-2)(c+1) + 2$. 
 
Pick one crossing $x$ and split it as the high split $H$ that corresponds to the state $s_{max}$, resulting in a new projection $P'$ with $k_H$ projection components, $P'_1, P'_2,\ldots, P'_{k_H}$. Each $P'_i$ is a single projection component  such that  $P_i'$ is disjoint from $P_j'$ for $i\not=j$. Let $c_i$ be the number of $n$-crossings in $P'_i$. For each $i$, $c_i\leq c_n$ since the projection initially had $c_n+1$ crossings. We
have $\sum_{i=1}^{k_H} c_i=c$.So we can apply the inductive hypothesis to each such component.

 Denote maximal and minimal states for
each $K'_i$ as $s'_{max,i}$ and $s'_{min,i}$. Because the $K'_i$ are
disjoint, resolving the crossings in one $K'_i$ does not affect the
number of components formed by resolving the crossings in some other
$K'_{i'}$. Therefore, $|s_{max}| = \sum_{i=1}^{k_H}|s'_{max,i}|$. 

We note that to achieve the decompositions that yield the maximum
(minimum) power of $A$, we choose at each crossing a split that is maximal
(minimal) with respect to the skein relation of that
crossing. However, within a skein relation, there may be multiple
maximal (minimal) splits. The split that yields the highest (lowest)
power of $A$ depends on the configuration of outside strands, which is
dictated, for some projection components, by the choice of splitting
performed at $S$. 

When we begin with $P$ and split $x$ as $L$, we obtain $k_L$
projection components, which we label $P''_1, P''_2,\ldots,
P''_{k_L}$. For the same reason as above, $|s_{min}| =
\sum_j^{k_L}|s''_{min,j}|$. We distinguish $|s'_{min,i}|$ from
$|s''_{min,i}|$ by the fact that the minimal splitting that realizes
the lowest power of $A$ depends on the splitting at $x$. The splits
chosen to realize $|s''_{min,i}|$ come from the minimal splits that realize
the lowest power of $A$ when $x$ is split as $L$. 

 Suppose $x$ is split as $H$ and each other crossing in the projection
 is split as the minimal split that realizes the lowest power of $A$
 when $x$ is split as $L$. We are left with a set of disjoint
 connected components. Changing the split of $x$ from $H$ to $L$
  corresponds to surgering the circles around that crossing to
 add $-k_H + k_L$ more circles to our state. 

It follows that  $$\sum_{j=1}^{k_L} |s''_{min, j}|=\sum_{i=1}^{k_H} |s''_{min, i}|-k_H+k_L.$$

By Lemma~\ref{sdvsnc}, $||H,T||+||L,T||=k_H+k_L \leq 2n-D$. Therefore,
$|s_{min}| = \sum_{i=1}^{k_H} |s''_{min, i}| - k_H+k_L\leq \sum_{i = 1}^{k_H} |s_{min,i}| - k_{H}+(2n-D-k_{H})$.

By combining our inductive hypothesis and Lemma~\ref{sdvsnc}, we
obtain the following:
\begin{align*}
|s_{max}| + |s_{min}| 
&= \sum_{i = 1}^{k_H} (|s'_{max, i}| + |s''_{min, i}|) - k_H+k_L \\
&\le  \sum_{i = 1}^{k_H} ((2n - 2-D)c_i + 2) - k_H+(2n-D-k_H) \\
&= (2n -D-2)c + 2n - D \\
&= (2n-D-2)(c + 1) + 2
\end{align*}

\end{proof}

\begin{remark}
When $k_H<n-D$, the equality $k_L= 2n-D-k_H$ cannot be realized because $k_L \le n <2n-D-k_H$. 

\end{remark}

In order to complete the proof of Theorem~\ref{componentsthm}, it suffices to show that for $n \geq 3$,  $D \geq 2$ for $H$ and $L$ and then apply Lemma~\ref{general_components_theorem}. In other words, we need to show that the split distance $d(H,L)$ between any high split and any low split is at least 2. In order to do so we must investigate how the high and low splits were generated when we constructed the $(n+1)$-skein relation.

  \begin{definition} Let $S$ be a term in an $n$-skein relation and $S'$ be a term in a corresponding $(n+1)$-skein relation. If the split of $S'$ was obtained by resolving intersections of an overstrand with the split of $S$, we call $S$ the \textit{mother} of $S'$ as illustrated in Figure \ref{mother}. We consider $S'$ to be an \textit{offspring} of $S$.
  \end{definition}

\begin{figure}[h]
\begin{center}
\includegraphics[width=0.4\columnwidth,angle=0] {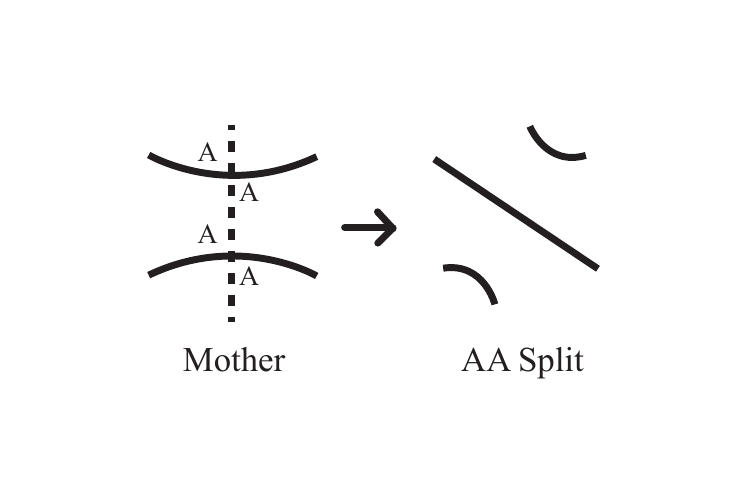} 
\caption{\label{mother} The AA split is the result of resolving the intersections between the overstrand (dashed line) and the mother as A splits.}
\end{center}
\end{figure}

 Let the term $S$ be the mother of the term $S'$. If $I(O,S)\neq 0$ then $S$ generates more than one offspring as we resolve the intersections with the overstrand as different combinations of $A$-splits and $B$-splits.  Let $S'$ and $R'$ be offspring obtained from the same mother. If the sequence of $A$-splits and $B$-splits to obtain them differs in only one place, then $d(S',R') = 1$. 
 
 \begin{definition}
 Consider the endpoints of the most recently added overstrand, assuming it is vertical,  after the double crossings have been resolved. Travel away from one of the new endpoints. If the arc turns in the clockwise direction away from the vertical with respect to the starting endpoint for both of the newest endpoints, then we call the split a \textit{clockwise split}. Similarly,  if the arc turns in the counterclockwise direction from the vertical  with respect to the starting endpoint for both of the newest endpoints, then we call the split a \textit{counterclockwise split}. Finally, if the endpoints of the most recently added strand are connected by a straight line, then we call the split a \textit{straight split}. This can only occur when the most recently added overstrand does not intersect its mother split. Note there are some splits that are neither counterclockwise, clockwise, nor straight. See Figure~\ref{clockwise}.
 \end{definition}

It is important to note that straight splits only occur as part of odd
multi-crossing skein relations.
   \begin{figure}[h]
\centering
\includegraphics[width=.5\columnwidth,angle=0] {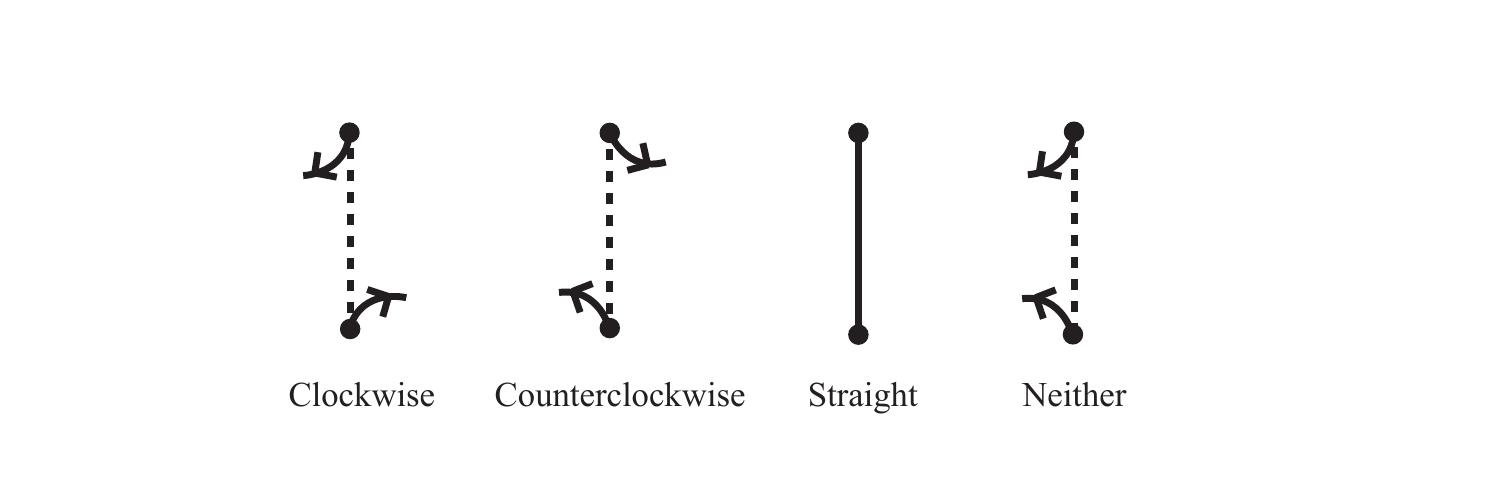}%
\caption{\label{clockwise} We can classify the $n+1$ splits into four distinct types by considering the arcs that emanate from the endpoints of the most recently layed overstrand.}
\end{figure}

 \begin{lemma} \label{rulesoflife} All high splits are either counterclockwise or straight. All low splits are either clockwise or straight.
 \end{lemma}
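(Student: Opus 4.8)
The plan is to reduce the lemma to a single combinatorial statement: a high split is obtained from its mother by resolving \emph{every} intersection with the last overstrand as an A-split, and dually a low split uses only B-splits. Once this is known, the geometric classification is read off directly. Throughout, let $O'$ be the most recently added (vertical) overstrand taking the relevant $n$-skein relation to the $(n+1)$-skein relation, let $H$ be a high split, and let $S$ be the mother of $H$, so that $H$ arises by resolving the $|I(O',S)|$ intersections of $O'$ with $S$ as some chosen sequence of A- and B-splits.

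First I would record the two bookkeeping facts coming from the construction. Since the $(n+1)$-skein relation is formed by resolving the intersections of $O'$ with each mother in \emph{all} combinations of A- and B-splits, every such combination is itself a term of the $(n+1)$-skein relation. Moreover, by the remark just before the definition of clockwise/counterclockwise splits, two offspring of the same mother differing in exactly one A/B choice satisfy $d(S',R')=1$, and by the power-adjustment rule inherited from the $2$-skein relation each A-split contributes $+1$ and each B-split $-1$ to the exponent, so the power of an offspring equals $P(S)+(\#A-\#B)$.

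The core step is then immediate. Suppose $H$ resolves some intersection of $O'$ with $S$ as a B-split, and let $H'$ be the offspring obtained by changing that one resolution to an A-split. Then $H'$ is again a term, $d(H,H')=1$, and $P(H')=P(H)+2$, so $H\prec H'$ and $H$ is not maximal, contradicting that $H$ is a high split. Hence every intersection of $O'$ with $S$ is resolved as an A-split, i.e. $H$ is the all-A offspring of $S$. The dual argument applies to a low split $L$: if $L$ contained an A-split, flipping it to a B-split would produce $L'$ with $d(L,L')=1$ and $P(L')=P(L)-2$, so $L'\prec L$ and $L$ would not be minimal; thus $L$ is the all-B offspring of its mother.

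Finally I would translate this into the stated classification. If $|I(O',S)|=0$, then $O'$ meets no arc of its mother, its two endpoints are joined by a straight segment, and the split is straight (this is exactly the case forcing $n$ even, as in Lemma~\ref{first}). Otherwise there are a topmost and a bottommost intersection of $O'$ with $S$, and the arcs leaving the two endpoints of $O'$ are governed by the resolutions of those two extreme crossings; in the all-A offspring both arcs turn counterclockwise away from the vertical, making $H$ a counterclockwise split, while in the all-B offspring both turn clockwise, making $L$ a clockwise split. The step I expect to be the main obstacle is precisely this last correspondence: verifying against the paper's orientation conventions that an A-split at each of the two extreme crossings produces a counterclockwise turn at the corresponding endpoint (and likewise that both endpoints turn the same way, including when there is a single crossing). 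This demands a careful local analysis of the smoothed diagram near each endpoint, whereas the combinatorial heart of the proof — that maximality forces all A-splits and minimality forces all B-splits — follows at once from the split-distance-one remark together with the power-adjustment rule.
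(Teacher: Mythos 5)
Your proposal is correct and takes essentially the same route as the paper: both arguments reduce the lemma to the claim that a high split is the all-A offspring of its mother (dually, a low split is the all-B offspring), and then read the counterclockwise/clockwise/straight trichotomy off the resolution at the crossing nearest each endpoint of the new overstrand, with straight splits occurring exactly when $|I(O',S)|=0$. If anything, your version is more complete, since the paper merely asserts that ``a high split is generated by an all-A-split division'' while you justify it by flipping a single B-split to an A-split to produce a term one split move away with power raised by $2$, contradicting maximality via the covering relation $\prec$; your remaining geometric step is asserted at the same level of detail as in the paper's proof.
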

 
\begin{proof}
If $I(O,S)=0$, $S$ can only generate one term $S'$ that has a straight split. Therefore all straight splits are high or low splits. If  $S$ is a term in a skein relation of an odd multi-crossing, then $I(O,S)\geq 1$. Thus, terms with straight splits only occur in the skein relations from odd multi-crossings. 

Suppose now that $|I(O,S)|\neq 0$. If the first crossing encountered when traveling away from the endpoint of the newest overstrand  is resolved as an A-split, then an arc will extend counterclockwise from at least one endpoint of $O$. One splitting will create counterclockwise arcs at both endpoints if and only if $|I(O,S)| = 1$. Likewise, if the first crossing encountered when traveling away from the endpoint is resolved as a B-split, then an arc will extend clockwise from at least one endpoint of $O$. One splitting will create clockwise arcs at both endpoints if and only if $|I(O,S)| = 1$. A high split is generated by an all-A-split division and a low split is generated by an all-B-split division. Therefore, all high splits that are not straight are counterclockwise and all low splits that are not straight are clockwise.
\end{proof}

\begin{lemma} \label{special mom} 
Let $n \geq 2$, and let $S$ be an $n$-split with  offspring $S'$ from overstrand $O$.  If $|I(O,S)| \le 1$ and $S'$ is a high split (resp. low split), then $S$ is a high split (resp. low split).

\end{lemma}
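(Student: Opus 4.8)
The plan is to prove the contrapositive of the high-split statement (the low-split case is the mirror image, with $A$-splits replaced by $B$-splits, ``maximal'' by ``minimal'', and $S \prec S_1$ by $S_1 \prec S$): assuming $S$ is \emph{not} a high split, I will show that the given offspring $S'$ is not a high split either. Everything takes place in two posets — the $n$-skein relation of the mother and the $(n+1)$-skein relation of the offspring — and the engine of the argument is to lift a single covering step $S \prec S_1$ in the mother's poset to a covering step $S' \prec S_1'$ in the offspring's poset.

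First I would isolate the structural fact that controls which offspring can be high. Two siblings (offspring of the same mother $S$) whose resolutions differ in exactly one crossing are one split move apart and have powers differing by $2$, so they form a covering relation; consequently the all-$A$ offspring is the unique maximal sibling, and every other sibling lies strictly below it. Hence a high offspring must be the all-$A$ offspring of its mother, giving $P(S') = P(S) + |I(O,S)|$ (consistent with Lemma~\ref{rulesoflife}, that high splits are counterclockwise or straight). So I may assume $S'$ is the all-$A$ offspring, since otherwise $S'$ sits below a sibling and is already not high.

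Since $S$ is not maximal there is a term $S_1$ in the $n$-skein relation with $S \prec S_1$, i.e.\ $S$ and $S_1$ are one split move apart and $P(S_1)=P(S)+2$. The goal is to exhibit an offspring $S_1'$ of $S_1$ with $S' \prec S_1'$, contradicting the maximality of $S'$. The split move relating $S$ and $S_1$ swaps a pair of arcs on four endpoints between the two non-crossing pairings, fixing all other arcs, which I call $\rho$. Because $|I(O,S)| \le 1$, the two swapped arcs carry at most one intersection with $O$, leaving exactly two cases. In the easy case $O$ misses both swapped arcs; then $A$-resolving the (at most one) intersection of $O$ with $\rho$ in both $S$ and $S_1$ yields offspring $S'$ and $S_1'$ that differ only by the original split move on arcs untouched by the resolution, so $d(S',S_1')=1$ and $P(S_1')=P(S_1)+|I(O,S)|=P(S')+2$, whence $S' \prec S_1'$.

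The crux is the remaining case, where $O$ crosses exactly one swapped arc; here $|I(O,S)|=1$, and placing $O$ vertically the four split-move endpoints lie three on one side of $O$ and one on the other. I would compute directly: with the endpoints in cyclic order $O_t, p, q, r, O_b, s$ (with $p,q,r$ left of $O$ and $s$ right), the two non-crossing pairings $\{pq,rs\}$ and $\{ps,qr\}$ each meet $O$ in a single arc; performing the $A$-resolution of that arc in each, then discarding the common new arc and $\rho$, leaves on the four points $\{O_t,p,q,r\}$ (or $\{O_b,p,q,r\}$, according to the smoothing convention) precisely the two non-crossing pairings of those points. These differ by one split move, so again $d(S',S_1')=1$; and since both offspring are $A$-resolutions, $P(S_1')=P(S_1)+1=P(S')+2$, giving $S' \prec S_1'$. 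The main obstacle is exactly this local verification — that $A$-resolving the single $O$-crossing in each of the two swapped pairings still leaves the results one split move apart — and I expect it to close by the explicit endpoint bookkeeping above, with the pleasant feature that the conclusion does not depend on which smoothing is labelled $A$, so no convention tracking is needed. The low-split statement then follows verbatim by the mirror argument.
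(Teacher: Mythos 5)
Your overall strategy coincides with the paper's: argue the contrapositive, take an aunt $S_1$ with $S \prec S_1$, and lift this covering relation to a cousin $S_1'$ with $S' \prec S_1'$. Your preliminary reduction (a high offspring must be the all-$A$ offspring of its mother, since siblings differing in one resolution form covering relations) is sound, and your explicit computation in the ``crux'' case --- three split-move endpoints on one side of $O$, one on the other --- is correct; it is exactly the paper's case in which the split move involves the arc of the mother that meets $O$.

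However, your ``easy case'' has a genuine gap. From the hypothesis that $O$ misses both swapped arcs of $S$ you conclude that $A$-resolving the (at most one) intersection of $O$ with the fixed arcs, identically in $S$ and $S_1$, produces offspring differing only by the original split move. This tacitly assumes that $O$ also misses both swapped arcs of the aunt $S_1$, and that can fail: if the four split-move endpoints straddle $O$ two-and-two (cyclic order $O_t, a, b, O_b, c, d$, with $S$ containing $\{ab, cd\}$), then $O$ misses $ab$ and $cd$ but crosses \emph{both} arcs $bc$ and $ad$ of $S_1$. In that configuration your construction does not even define an offspring of $S_1$ --- two crossings of $O$ with $S_1$ are left unresolved --- and the natural repair, the all-$A$ offspring of $S_1$, has power $P(S_1)+|I(O,S)|+2 = P(S')+4$, so it cannot give a covering relation (nor need its split be one move from $S'$). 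The missing idea is precisely the paper's device: resolve the two new crossings of $O$ with $S_1$ as one $A$-split and one $B$-split, so that $P(S_1') = P(S_1) = P(S')+2$, and then check that this mixed resolution is one split move from $S'$; this is the content of Figure~\ref{fig:aunt12} and of case (c) in Figure~\ref{fig:4}. Your case division, which looks only at how $O$ meets the mother's swapped arcs, must be refined to track the position of $O$ relative to all four split-move endpoints; once the straddling configuration is handled by the mixed resolution, the rest of your argument goes through.
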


\begin{proof}
First suppose that $S$ is not a high split and that $|I(O,S)| = 0$.  Then there  exists a term $T$ with $n$ arcs such that $S \prec T$ with respect to the partial ordering.  We call $T$ the aunt of $S'$.  Since $|I(O,S)| = 0$, either $|I(O,T)| = 0$ or $|I(O, T)| = 2$ since $T$ differs from $S$ by a single split move.

By resolving crossings, $T$ generates a term $T'$ that we will call the cousin. Suppose $|I(O,T)|=0$. Then $T'$ is a straight split. The cousin is clearly one split move from $S'$ because they differ only in the location where the mother and aunt differ. See Figure~\ref{fig:aunt1}. Furthermore, the cousin is one level above $S'$ because they each have the same coefficient as their respective mothers. Therefore $S'<T'$, contradicting our original assumption that $S'$ is a high split.

If $|I(O,T)|=2$, we can generate more than one term $T'$. In particular, consider the two terms generated by splitting both intersections with one A-split and one B-split. We will denote both terms as $T'$ for simplicity. Observe that the split of $T'$ and $S'$ are one split move apart. Moreover, the split of the mother $S$ is only one split move away from the split of the aunt $T$. See Figure~\ref{fig:aunt2}. Since $P(T)=P(T')$, the cousin $T'$ is one level above $S'$. Thus, $S'<T'$, contradicting our original assumption that $S'$ is a high split.

\begin{figure}[h]
        \centering
        \begin{subfigure}[b]{0.3\textwidth}
                \centering
                \includegraphics[width=\textwidth]{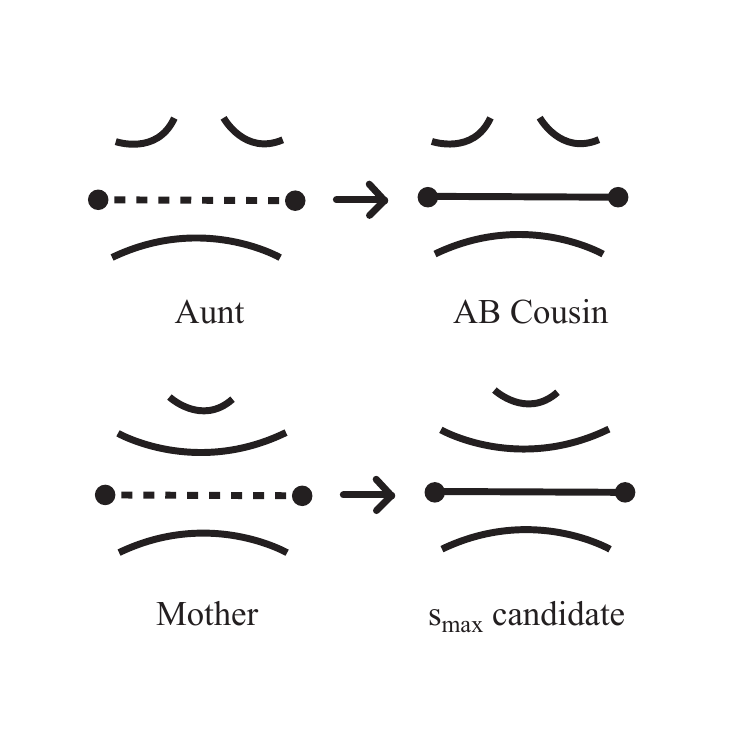}
                \caption{}
                \label{fig:aunt1}
        \end{subfigure}
        \begin{subfigure}[b]{0.3\textwidth}
                \centering
                \includegraphics[width=\textwidth]{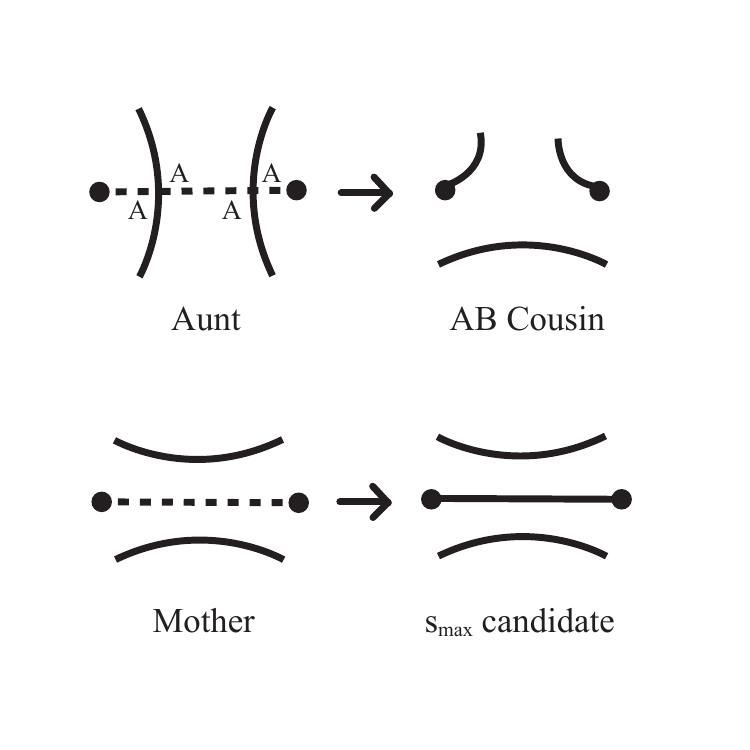}
                \caption{}
                \label{fig:aunt2}
        \end{subfigure}
        \caption{For cases satisfying condition (1), the cousin and
          the high split are one split move apart. These illustrations
          omit arcs that the mother and aunt share. (a) The added
          strand does not intersect the aunt or the mother. The
          resulting cousin and the high split are one split move
          apart. (b) The added strand intersects the aunt twice. The
          resulting cousin and the high split are one split move apart}\label{fig:aunt12}
\end{figure}

Now suppose $S$ is not a high split and  $|I(O,S)|=1$. Then there exists a term $T$ such that $S \prec T$ with respect to the partial ordering. Since the splits of $T$ and $S$ are one split move away, they differ only in two arcs. There are three cases for where  $T$ and $S$ differ. In case (a), the split move that separates the mother and aunt involves the arc in the mother that intersects the added strand, as shown in Figure~\ref{aunta}. In case (b), the split move that separates the mother and aunt involves two arcs on the same side of the added strand in the mother split, as shown in Figure~\ref{auntb}. Finally, in case (c), the split move that separates the mother and aunt involves an arc on each side of the added strand in the mother split. as shown in Figure~\ref{auntc}. The figures show that in each case there exists a cousin $T'$ such that $S'<T'$. This contradicts the assumption that $S'$ is a high split. 

A similar proof follows if we begin with the assumption that $S'$ is a low split.
\end{proof}

 \begin{figure}[h]
 
\begin{subfigure}[t]{0.3\textwidth}
\centering
\includegraphics[width=\textwidth,angle=0] {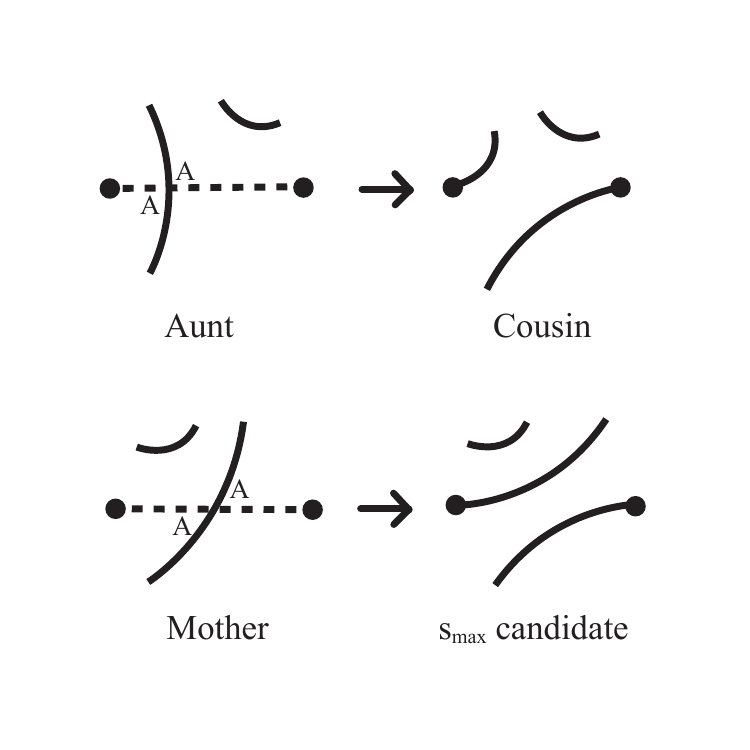}%
\caption{ \label{aunta} }
\end{subfigure}
\begin{subfigure}[t]{0.3\textwidth}
\centering
\includegraphics[width=\columnwidth,angle=0] {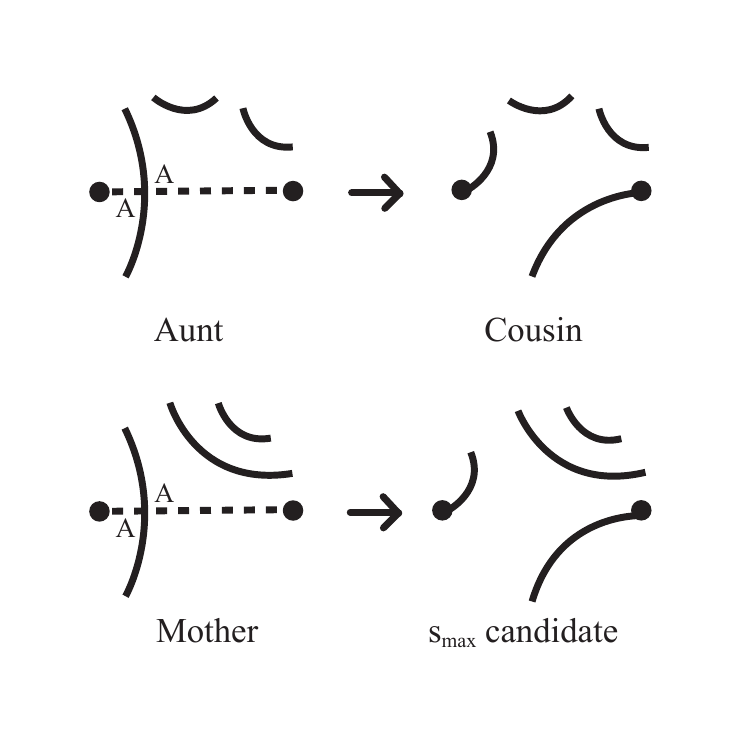}%
\caption{ \label{auntb} }
\end{subfigure}
\begin{subfigure}[t]{0.3\textwidth}
\centering
\includegraphics[width=\columnwidth,angle=0] {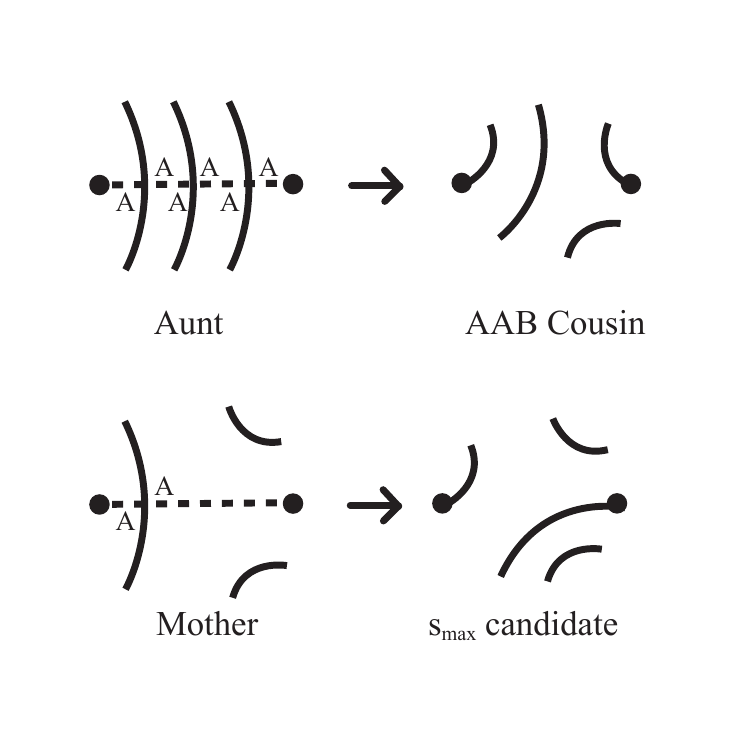}%
\caption{\label{auntc} }
\end{subfigure}
\caption{For cases satisfying condition (2), the cousin and the high split are one split move apart. These illustrations omit arcs that the mother and aunt share. (a)  The split move that separates the mother and aunt involves the arc in the mother that intersects the added strand. (b)  The split move that separates the mother and aunt involves two arcs on the same side of the added strand in the mother split. (c) The split move that separates the mother and aunt involves an arc on each side of the added strand in the mother split. }
\label{fig:4}
\end{figure}

\begin{lemma} \label{nomaxmin} A split cannot be both a high split and a low split. 
\end{lemma}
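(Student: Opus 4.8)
The plan is to combine the structural classification of high and low splits from Lemma~\ref{rulesoflife} with the parentage result of Lemma~\ref{special mom}, exploiting a parity obstruction. Suppose, toward a contradiction, that some split $S$ in an $n$-skein relation is simultaneously a high split and a low split.

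First I would show that $S$ must be a straight split. By Lemma~\ref{rulesoflife}, since $S$ is a high split it is counterclockwise or straight, and since it is a low split it is clockwise or straight. The three split types are pairwise incompatible: a single arc emanating from one of the newest endpoints cannot turn both clockwise and counterclockwise, and a straight arc joining the two newest endpoints turns in neither direction. Hence the only type compatible with both conclusions is straight, so $S$ is a straight split. I would then invoke the fact noted after Lemma~\ref{rulesoflife} that straight splits occur only in skein relations of odd multi-crossings; thus $n$ is odd, and in particular $n \ge 3$, so $S$ has a mother $S_0$ in the $(n-1)$-skein relation.

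Next I would descend to the mother. Because $S$ is straight, its mother is not intersected by the most recently added overstrand $O$, so $|I(O,S_0)| = 0 \le 1$. Lemma~\ref{special mom} then applies in both its high and its low form: since $S$ is a high split, $S_0$ is a high split, and since $S$ is a low split, $S_0$ is a low split. Therefore $S_0$ is simultaneously a high and a low split, now in the $(n-1)$-skein relation. Applying the first step to $S_0$ forces $S_0$ to be straight as well, which requires $n-1$ to be odd; but $n$ is odd, so $n-1$ is even, and even multi-crossings admit no straight splits. This contradiction completes the argument, with no separate base case needed, since the even case is itself resolved by the nonexistence of straight splits.

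The step I expect to be the crux is the reduction in the first paragraph: everything hinges on the mutual exclusivity of the counterclockwise, clockwise, and straight types, so that ``both high and low'' genuinely forces ``straight.'' Once that is secured, the parity drop supplied by Lemma~\ref{special mom} (which is available precisely because a straight split has a non-intersected mother, giving $|I(O,S_0)| = 0 \le 1$) carries the contradiction automatically, and I would only need to make sure the classification language of Lemma~\ref{rulesoflife} is applied to the correct ``most recently added overstrand'' at each of the two levels $n$ and $n-1$.
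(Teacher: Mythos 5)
Your proposal is correct and follows essentially the same route as the paper's proof: use Lemma~\ref{rulesoflife} to force the split to be straight, apply Lemma~\ref{special mom} (valid since a straight split's mother satisfies $|I(O,S_0)|=0$) to conclude the mother is also both high and low, hence straight, and then derive the contradiction from the fact that straight splits exist only in odd multi-crossing skein relations while the two levels have opposite parity. Your write-up is in fact somewhat more explicit about the parity bookkeeping than the paper's, but the ideas and the lemmas invoked are identical.
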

\begin{proof} Suppose the split of $S'$ were both a high split and a low split. By Lemma \ref{rulesoflife}, the split of $S'$ is straight, since it cannot be both clockwise and counterclockwise. Lemma \ref{special mom} implies that $S$, the the mother of $S'$, is both a high split and a low split and therefore must also be straight. This would imply that both $S$ and $S'$ are part of an even multi-crossing skein, which is a contradiction. \end{proof}

\begin{lemma} 
\label{greaterthanone}
Let $n \geq 3$. Given any high $n$-split $S$ and any low $n$-split $T$, $d(S,T)\geq 2$.
\end{lemma}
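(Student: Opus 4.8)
The plan is to induct on $n$. For the base case $n=3$ I would verify the claim directly from the finitely many triple-crossing skein relations tabulated in \citep{Triple}, checking that every high split sits at least two split moves from every low split. For the inductive step, assume the statement for some $n\geq 3$ and pass to an $(n+1)$-split obtained by laying a final overstrand $O$ over an $n$-crossing, with newest endpoints $p$ and $q$. Given a high split $S$ and a low split $T$, Lemma \ref{nomaxmin} forces $S\neq T$, so $d(S,T)\geq 1$; I would then assume $d(S,T)=1$ and seek a contradiction. By Lemma \ref{rulesoflife}, $S$ is counterclockwise or straight and $T$ is clockwise or straight, and I would organize the argument by the behaviour of the two arcs meeting $p$ and $q$, since a single split move alters exactly two arcs.

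First I would dispose of every case involving a straight split. A straight split contains a single arc joining $p$ to $q$, and because a split is a crossingless family of disjoint arcs, that arc separates the disk so that every other arc lies entirely on one side of it. Consequently a single split move touching the $pq$ arc can only repair it against arcs lying on one fixed side, and so it cannot produce a clockwise (or counterclockwise) configuration, in which the arcs leaving $p$ and $q$ must reach opposite sides; this rules out the straight-versus-clockwise and straight-versus-counterclockwise subcases outright. In the straight-versus-straight subcase the distinguishing move cannot touch the $pq$ arc at all, so it descends to a single split move between the mothers; since a straight split satisfies $|I(O,\cdot)|=0$, Lemma \ref{special mom} makes these mothers a high and a low $n$-split one split move apart, contradicting the inductive hypothesis.

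The remaining and principal case is $S$ counterclockwise, $T$ clockwise. Here both the arc at $p$ and the arc at $q$ change type, so the unique split move realizing $d(S,T)=1$ must act precisely on those two arcs; hence $S$ and $T$ agree on every arc disjoint from $\{p,q\}$. I would then follow the mothers $S_m$ and $T_m$, obtained by reversing the all-$A$ resolution that produces $S$ and the all-$B$ resolution that produces $T$. When $|I(O,S_m)|=|I(O,T_m)|=1$, the mothers share the same $n-1$ exterior arcs, so their single remaining arc is forced and $S_m=T_m$; this common mother would then be both a high split (via $S$, by Lemma \ref{special mom}) and a low split (via $T$), contradicting Lemma \ref{nomaxmin}.

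The \textbf{main obstacle} is exactly this counterclockwise-versus-clockwise case once the overstrand crosses a mother in two or more arcs, where Lemma \ref{special mom} no longer applies and the purely local picture at $p$ and $q$ genuinely permits a single split move. The contradiction must therefore come from global structure rather than from the arcs at $p$ and $q$ alone. The route I would pursue is that an all-$A$ resolution and an all-$B$ resolution of $k\geq 2$ crossings create $k-1$ internal arcs nested in opposite senses, so that reconciling the requirement that $S$ and $T$ \emph{agree} off $\{p,q\}$ with these incompatible nestings forces $S$ and $T$ to differ in more than two arcs, whence $d(S,T)\geq 2$. Making this reconstruction of the mothers from the shared exterior arcs precise is where the real work lies.
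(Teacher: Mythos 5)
Your proposal has the same skeleton as the paper's proof---the case analysis driven by Lemma~\ref{rulesoflife}, and the use of Lemmas~\ref{special mom} and~\ref{nomaxmin} to force a contradiction when the mothers coincide---but it is genuinely incomplete exactly where you say it is: the counterclockwise-versus-clockwise case in which the overstrand $O$ meets the mother in $k \geq 2$ arcs. That case is not a technicality to be deferred; it is the heart of the lemma, and your proposed route through ``incompatible nestings'' and ``reconstructing the mothers from the shared exterior arcs'' is never carried out.

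Moreover, the premise from which you launch that unfinished analysis is false: the local picture does \emph{not} permit a single split move when $k \geq 2$. Resolving all $k$ crossings of $O$ with its mother as $A$-splits produces, besides the two arcs at the new endpoints $p$ and $q$, exactly $k-1$ internal arcs, and already the first of these separates the arc at $p$ from the arc at $q$ inside the disk (its endpoints lie on opposite sides of $O$, between the endpoints used by the arcs at $p$ and at $q$). A split move, by definition, replaces two \emph{adjacent} arcs by the only other two arcs connecting their endpoints without creating crossings; since an internal arc lies between the arc at $p$ and the arc at $q$, any reconnection of their four endpoints would cross it, so no single split move can involve both. But, as you yourself observe, a move taking a counterclockwise split to a clockwise split must change the arcs at both $p$ and $q$. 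Hence $d(S,T)=1$ is impossible outright whenever $k \geq 2$, with no reconstruction of mothers required. This adjacency observation is precisely how the paper closes the case: a single move reversing the orientation at both new endpoints can occur only if no arcs pass between them, which forces $|I(O,S_m)| = |I(O,T_m)| = 1$, after which the identical-mothers argument (which you do have, and which matches the paper's) yields the contradiction via Lemmas~\ref{special mom} and~\ref{nomaxmin}. Incidentally, once this is in place your induction becomes unnecessary: the paper's argument is direct, handling your straight-versus-straight case not by an inductive hypothesis but by noting that the straight arc itself separates the two arcs whose orientations would have to be reversed.
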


 \begin{proof} 

 It suffices to prove the statement for the following cases:
 \begin{enumerate}
\item $S$ is counterclockwise and $T$ is clockwise.
\item $S$ is straight and $T$ is clockwise.
\item $S$ is is counterclockwise and $T$ is straight.
\item $S$ is straight and $T$ is straight.
\end{enumerate}

 \begin{enumerate}
\item
Suppose a counterclockwise high split $S$ were one split move away from a clockwise low split $T$. Then, one split move to a high split will change the orientation of both the counterclockwise arcs that extend from a new endpoint. This can only occur if there are no other arcs or lines passing between the new endpoints. For this to occur, the high split and the low split must be the products of resolving one intersection between a mother and the overstrand as an A-split and one as a B-split.  Since the high split and the low split are one split move away, they differ by only 2 arcs. The four endpoints of these arcs are the same in both splits.  One pair of endpoints is a result of the overstrand, and the other pair of endpoints is from the mother split.  Consequently the mothers of the two splits are identical. We apply Lemma \ref{special mom} and conclude that the mother of the high split is maximal and the mother of the low split is minimal. Since their mothers are identical, this implies the mother split is both maximal and minimal, contradicting Lemma \ref{nomaxmin}. Therefore, a counterclockwise high split cannot be one split move from a clockwise low split.

\item Consider the endpoints of the new strand on the straight split (i.e. the straight strand). In order to make an arc extend clockwise from the top endpoint,
a split move must be performed between the straight strand and an arc on its left. This will yield an arc extending clockwise from the top endpoint and counterclockwise from the bottom endpoint. We will need at least one more split move to construct an arc extending clockwise from the bottom endpoint make this split into a clockwise split. Thus, clockwise and straight splits are more than one split move apart.

\item The argument for part (2) holds.

\item By Lemma \ref{special mom}, the mother of the high split is maximal and the mother the low split is minimal. Neither mother is a straight split because the mothers are part of even multi-crossing skein. Consider the endpoints of the most recently added strand on the mother split. Arcs extend from these points counterclockwise and clockwise respectively on the high split and the low split candidate. There is no way to change the direction of both the top and bottom counterclockwise arcs in a single split move because a straight line rests betweens the two arcs. Therefore, the high and low splits are more than one split move apart. \end{enumerate}
\end{proof}

\medskip

\noindent \textbf{Proof of Theorem 3.1:} By applying Lemma~\ref{general_components_theorem} and Lemma~\ref{greaterthanone}, we have now proved that \\ $|s_{max}| + |s_{min}| \leq (2n - 4)c_n + 2.$



\section{Bound on multi-crossing number in terms of the Span of the Bracket Polynomial} \label{together}

We now use the results proved in Sections \ref{Floor Proof} and \ref{components} to prove our main result. 
\begin{theorem}
\label{we-did-it}
For any projection of a knot or link, $K$, with $c_n$ $n$-crossings with $n \geq 3$, $$\text{span}\langle K \rangle\le \left( \left\lfloor \frac{n^2}{2}\right\rfloor + 4n -8 \right) c_n.$$
\end{theorem}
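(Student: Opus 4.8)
The plan is to combine the two structural bounds already established in this paper — the width theorem (Theorem~\ref{Upper Bound}) and the components theorem (Theorem~\ref{componentsthm}) — with the span inequality derived in the introduction. Recall that the skein-relation analysis in the introduction gives
\begin{align*}
\text{span}\langle K \rangle = M - m \leq \left(\sum_{i=1}^{c_n} w(R_i)\right) + 2\bigl(|s_{max}| + |s_{min}| - 2\bigr).
\end{align*}
The two summands on the right are exactly the ``contribution from width'' and the ``contribution from components,'' and each is controlled by one of the two theorems, so the entire task reduces to substituting those bounds and simplifying.

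First I would bound the width contribution. By Theorem~\ref{Upper Bound}, every $n$-skein relation $R_i$ satisfies $w(R_i) \leq \lfloor n^2/2 \rfloor$. Since the projection has $c_n$ crossings, summing over all of them gives
\begin{align*}
\sum_{i=1}^{c_n} w(R_i) \leq \left\lfloor \frac{n^2}{2} \right\rfloor c_n.
\end{align*}
Next I would bound the components contribution. By Theorem~\ref{componentsthm}, for $n \geq 3$ we have $|s_{max}| + |s_{min}| \leq (2n-4)c_n + 2$, hence
\begin{align*}
2\bigl(|s_{max}| + |s_{min}| - 2\bigr) \leq 2\bigl((2n-4)c_n + 2 - 2\bigr) = (4n-8)c_n.
\end{align*}
Adding the two estimates and factoring out $c_n$ yields the claimed bound
\begin{align*}
\text{span}\langle K \rangle \leq \left(\left\lfloor \frac{n^2}{2}\right\rfloor + 4n - 8\right)c_n.
\end{align*}

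Since both the width theorem and the components theorem have already been proved, there is no genuine analytic obstacle remaining at this stage; the real work was done in the two preceding sections, and this final step is essentially bookkeeping. The one point worth keeping in mind is that the two bounds may be applied independently precisely because the span inequality cleanly separates the height of the maximal state from the depth of the minimal state: $M$ is estimated using the $s_{max}$ state and $m$ using the $s_{min}$ state, so one never needs a single state to realize both extremes simultaneously. I would also note explicitly that the hypothesis $n \geq 3$ is inherited from Theorem~\ref{componentsthm}; the component bound is what fails at $n = 2$, which is consistent with the classical result $\text{span}\langle K \rangle \leq 4c_2$ being governed entirely by width.
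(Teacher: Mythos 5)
Your proposal is correct and matches the paper's proof essentially verbatim: both invoke the span inequality from the introduction, then substitute the width bound from Theorem~\ref{Upper Bound} and the component bound from Theorem~\ref{componentsthm}, and simplify. Your added remarks about the independence of the two bounds and the role of the $n \geq 3$ hypothesis are accurate observations consistent with the paper's discussion following the theorem.
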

\begin{proof}
Recall that \[
\text{span}\langle K \rangle \le M-m\leq \sum^{c_n}_{i=1}w(R_{i})+2(|s_{max}|+|s_{min}|-2).\]
By Theorem \ref{Upper Bound}, $w(R_i) \leq \left\lfloor \frac{n^2}{2}\right\rfloor$, and by Theorem \ref{componentsthm}, $|s_{max}| + |s_{min}| \leq (2n - 4)c_n + 2$.  Therefore,
\begin{align*}
\text{span}\langle K \rangle &\leq \sum^{c_n}_{i=1}w(R_{i})+2(|s_{max}|+|s_{min}|-2) \\
	&\leq \left\lfloor \frac{n^2}{2}\right\rfloor c_n + 2((2n - 4)c_n + 2 - 2) \\
    &= \left(\left\lfloor \frac{n^2}{2}\right\rfloor + 4n - 8\right)c_n.
\end{align*}
\end{proof}

The proof of the theorem goes through in the case of $n=2$, except for Lemma~\ref{greaterthanone}. In the case of $n=2$, for high split $T$ and low split $S$, one has instead that $d(S,T) \geq 1$.  This yields the result that $\text{span}\langle K \rangle \le 4 c_2.$

This bound assumes that all the crossings in the knot have skein relations that realize the bound $\lfloor \frac{n^2}{2}\rfloor$. The bound can be improved by considering the width of the different skein relations that correspond to different types of $n$-crossings. For example, in the case of $n=5$, the skein relations of the crossing types $13524$ and $14253$, which are reflections of one another, have width $8$, whereas the 22 other types of $5$-crossings have a width of $12$. 

Therefore, given a 5-crossing projection of a knot or link $K$, if we let $c_{5,8}$ denote the number of crossings of type $13524$ and $14253$, which have skein relations of width 8,  and $c_{5,12}$ denote the number of remaining crossings, each of which has a skein relation of width 12, and  we let $c_{5,P}$ denote the number of crossings in the projection 
$P$, we obtain 

$$\text{span}\langle K \rangle \leq 8c_{5,8} + 12 c_{5,12} +(4(5)-8)c_{5,P} =  20 c_{5,P} + 4 c_{5,12}$$

 In particular, if a link $K$ has a minimal 5-crossing projection such that all of its crossings are of type $13524$ and $14253$, then $\text{span}\langle K \rangle\leq 20c_5(K).$ If instead, we consider all types of $5$-crossings, we obtain the weaker bound $\text{span}\langle K \rangle \leq 24c_{5}$. In a similar manner, we can create tighter bounds for $n>5$ by considering the widths of the specific types of $n$-crossings found in the knot projection.

We can also find a bound on the span of the bracket polynomial in terms of petal number. A petal projection is defined in \citep{Multi} as the projection of a knot K with a single multi-crossing and no nested loops. The petal number of a knot $K$, denoted $p(K)$, is the least number of loops in a petal projection. This  is  equivalent to the number of strands passing through the single $n$-crossing. Every petal projection has a pre-petal projection that can be obtained by pulling the top strand off the crossing, creating one nesting loop.

\begin{corollary}
For any projection of a knot $K$, $ \text{span}\langle K \rangle \leq \left\lfloor \frac{(p(K)-1)^2}{2} \right\rfloor + 4p(K)-12$,  where $p(K)$ is the petal number of $K$.
\end{corollary}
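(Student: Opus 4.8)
The plan is to observe that the asserted bound is precisely the conclusion of Theorem~\ref{we-did-it} (the Main Theorem) specialized to $n = p(K)-1$ and $c_n = 1$, since
\[
\left\lfloor \frac{(p(K)-1)^2}{2}\right\rfloor + 4\bigl(p(K)-1\bigr) - 8 = \left\lfloor \frac{(p(K)-1)^2}{2}\right\rfloor + 4p(K) - 12.
\]
So the whole corollary reduces to exhibiting a projection of $K$ whose only crossing is a single $(p(K)-1)$-crossing, and then invoking the Main Theorem on that projection with $c_{p(K)-1}=1$.

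First I would fix a minimal petal projection of $K$: a connected diagram with one $p(K)$-crossing through which $p(K)$ strands pass and with no nested loops, as in the definition recalled above. I would then pass to the pre-petal projection by pulling the top (height-$1$) strand off the crossing. Deleting the single highest strand leaves the remaining $p(K)-1$ strands still mutually bisecting at the same point, so the central crossing is now a genuine $(p(K)-1)$-crossing. The two arc-ends that were formerly joined through the crossing by the top strand get reconnected by an arc routed around the outside of the crossing region; this is exactly the single nesting loop mentioned in the discussion preceding the corollary.

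The crux, and the step I expect to be the main obstacle, is verifying that this rerouting introduces \emph{no} new crossings, so that the pre-petal diagram is genuinely a single-$(p(K)-1)$-crossing projection. The key point is that the pulled strand was the unique top strand: its reconnecting arc can be drawn over all other strands and then pushed entirely outside the disk containing the crossing, so in projection it bounds a disk meeting none of the other petals and contributes only a nested, crossing-free loop. Granting this geometric fact, the pre-petal projection is connected and has exactly one $(p(K)-1)$-crossing. Provided $p(K)\ge 4$ (so that $n=p(K)-1\ge 3$), which holds for every nontrivial knot since petal numbers of knots are odd and at least $5$, Theorem~\ref{we-did-it} applies to this projection with $c_n=1$ and yields
\[
\text{span}\langle K\rangle \leq \left\lfloor \frac{(p(K)-1)^2}{2}\right\rfloor + 4p(K) - 12,
\]
as claimed.
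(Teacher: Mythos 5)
Your proposal is correct and follows exactly the paper's argument: pass to the pre-petal projection, which has a single $(p(K)-1)$-crossing, and apply Theorem~\ref{we-did-it} with $n = p(K)-1$ and $c_n = 1$. The only difference is that you verify details the paper simply cites from \citep{Multi} (that pulling off the top strand creates no new crossings) and note the $n \geq 3$ hypothesis, neither of which changes the route.
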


\begin{proof}
Every petal projection has a corresponding pre-petal projection with a single $(p(K)-1)$-crossing, meaning $c_{p(K)-1} = 1$.  Letting $n = p(K) - 1$, we obtain from Theorem \ref{we-did-it}, $ \text{span}\langle K \rangle \leq \left\lfloor \frac{(p(K)-1)^2}{2} \right\rfloor + 4(p(K)-1) - 8$, which is equivalent to $ \text{span}\langle K \rangle \leq \left\lfloor \frac{(p(K)-1)^2}{2} \right\rfloor + 4p(K)-12$.
\end{proof}

\section{The Crossing Spectrum}\label{spectrum}

Recall that every knot has an $n$-crossing number for all $n$ by \citep{Triple}. We can therefore define the \textit{crossing spectrum} of a knot $K$ to be the sequence of multi-crossing numbers $\{c_2(K), c_3(K), \ldots \}$. The Table in Section~\ref{chart} illustrates the crossing spectrum of various low crossing knots. We can consider the crossing spectrum as a knot invariant. It does not distinguish the knots $6_2$, $6_3$, $3_1\# 3_1$ and $3_1\# m3_1$. However, the crossing spectrum distinguishes other knots such as $3_1\#5_1$ from $3_1\#m5_1$, which are often difficult to distinguish.

Adams previously asked whether this crossing spectrum is monotonic. More precisely, is it true that $c_n(K) \ge c_{n+1}(K)$ for all $n\geq 2$? We have a few results that may help us answer this question.  In  \citep{Triple} and \citep{Quadruple}, proofs of the following appear:

\begin{align}
\label{odd-ineq}
 c_2(K) &> c_3(K)\ge c_5(K) \ge \cdots\\ 
\label{even-ineq}
 c_2(K)&> c_4(K) \ge c_6(K) \ge \cdots 
\end{align} 

Further,  it was shown in \citep{Multi} that for any knot or link, K, there exists an $n$, such that $c_n(K) = 1$.  

As shown in the Table in Section~\ref{chart}, there are no known counterexamples to the monotonicity conjecture.  To this collection of known relations we add the following result, which serves to somewhat interlace the inequalities \ref{odd-ineq} and \ref{even-ineq}:

\begin{lemma}\label{2n}
For all $n\geq 2$ and all links $K$, $c_n(K) \ge c_{2n}(K)$ .
\end{lemma}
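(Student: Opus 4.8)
The plan is to prove the equivalent statement that a minimal $n$-crossing projection of $K$ can be converted into a projection of $K$ using only $2n$-crossings and no more crossings than before; this immediately gives $c_{2n}(K)\le c_n(K)$. I would first record the elementary consequence of the inequalities (\ref{odd-ineq}) and (\ref{even-ineq}) that $c_m(K)\ge c_{m+2}(K)$ for every $m\ge 2$, since each index lies in one of the two displayed chains (with $c_2>c_3$ and $c_2>c_4$ supplying the bottoms).

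For $n$ even the result is then immediate: starting from multiplicity $n$ and raising it by $2$ at a time stays within the even values and reaches $2n$ after $n/2$ steps, so
\[
c_n(K)\ge c_{n+2}(K)\ge c_{n+4}(K)\ge\cdots\ge c_{2n}(K).
\]
The same idea reduces the odd case to a single parity-bridging inequality: if $n$ is odd then $n+1$ is even, and iterating $c_m\ge c_{m+2}$ from $m=n+1$ up to $2n$ yields $c_{n+1}(K)\ge c_{2n}(K)$. Hence it suffices to prove the one step $c_n(K)\ge c_{n+1}(K)$ for odd $n$, i.e.\ that one may raise the multiplicity of \emph{every} crossing from $n$ to $n+1$ without increasing the crossing count.

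To establish this step I would work directly in the projection, processing one $n$-crossing at a time and routing a single additional strand through it so that it becomes an $(n+1)$-crossing, with the knot type preserved by an ambient isotopy. The natural mechanism is to drag an arc of $K$ across the crossing point so that the arc bisects the crossing, raising the local strand count by exactly one; the bookkeeping is to verify that the arc can be brought in and carried back out through faces adjacent to the crossing in such a way that every crossing created in transit is itself absorbed into an existing multi-crossing rather than persisting as a new double point.

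I expect this parity-bridging step to be the main obstacle, and the difficulty is genuine rather than cosmetic. Any modification built from \emph{local} ``finger'' folds — pushing a small tongue of a strand through the crossing — changes the strand count at a crossing by an \emph{even} amount, because a fold passes through twice; such moves suffice for the even case but can never alter the residue $n\bmod 2$. Consequently the odd case forces a non-local isotopy, and the crux is to carry the single extra pass through every crossing globally while showing that the auxiliary crossings introduced along the way either cancel in pairs or slide into existing crossings, so that the total number of $(n+1)$-crossings produced does not exceed $c_n(K)$.
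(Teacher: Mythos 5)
Your even case is fine: inequalities (\ref{odd-ineq}) and (\ref{even-ineq}) do give $c_m(K)\ge c_{m+2}(K)$ for all $m\ge 2$, and for even $n$ the chain $c_n\ge c_{n+2}\ge\cdots\ge c_{2n}$ stays within one parity class. But the odd case — which is the entire content of the lemma, since the known inequalities never mix parities — has a genuine gap. You reduce it to proving $c_n(K)\ge c_{n+1}(K)$ for odd $n$, and this is precisely (a case of) the monotonicity conjecture that the paper explicitly states is open: ``Adams previously asked whether this crossing spectrum is monotonic\dots there are no known counterexamples.'' Your proposed mechanism for this step is only a sketch: you say you ``would'' drag an arc through each crossing and ``expect'' the auxiliary crossings to cancel or be absorbed, but no argument is given that the extra strand can be routed through all $c_n$ crossings exactly once each, returned to its starting point, and closed up without leaving stray double crossings, all while preserving the knot type. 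You correctly identify the parity obstruction (local finger moves add strands in pairs), but identifying the obstruction is not the same as overcoming it; as written, the lemma rests on an unproved — and, per the paper, open — statement that is strictly stronger than what the lemma asserts.

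The paper's proof avoids this entirely by exploiting the factor of $2$, in a way that is compatible with the very parity constraint you observed. Starting from a projection $P_0$ with $c_n$ $n$-crossings, one draws a closed curve $\gamma$ following the whole diagram in parallel; since the knot passes through each $n$-crossing $n$ times, so does $\gamma$, turning every $n$-crossing into a $2n$-crossing in one stroke — that is, each crossing gains $n$ strands (an even total effect across the diagram is irrelevant here because the addition is global, not a local finger). The curve $\gamma$ closes up after at most one extra double crossing, is placed entirely above $P_0$ so that it is an unknot, and then $K$ is connect-summed with this unknot along a small disk, the possible extra double crossing being removed by a Reidemeister I move. The result is a projection of $K\#0_1\simeq K$ with exactly $c_n$ crossings, each of multiplicity $2n$, giving $c_{2n}(K)\le c_n(K)$ for every $n\ge 2$ regardless of parity. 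This is exactly why the lemma targets $2n$ rather than $n+1$ or $n+2$: adding $n$ strands to every crossing simultaneously is easy (a parallel copy does it), whereas adding one strand to every crossing is the open problem you ran into.
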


\begin{proof}
Assume for simplicity that $K$ is a knot.   Let $P_0$ be a projection of $K$  with $c_{n}$ $n$-crossings.  Pick a point $p_0 \in P_0$ and choose an orientation on the knot.  We first construct a projection $P_1$ of $K \amalg 0_1$ with all crossings of order $2n$ save possibly a single double crossing. 

Take a point $p_1$ just to the right of $p_0$ such that $p_1$ is not on the knot.  Draw a path $\gamma$ following $P_0$ along the orientation, intersecting  each crossing transversely and intersecting $P_0$ nowhere else. Initially, this $\gamma$ is on the right of $p_0$; after the first crossing, it will be to the left, as depicted in Figure \ref{fig:unknot}.  Continuing along $P_0$, we see that $\gamma$ switches sides at each crossing, as depicted in Figure \ref{fig:unknot}. 


Hence, after we have followed $P_0$ around all the way back to $p$, we see that $\gamma$ will return to $p$ either on the same side or on the opposite side.  If $\gamma$ returns on the same side, then we simply join $\gamma$ to its starting point; otherwise we cross over $P_0$ at $p$ and join $\gamma$ to its starting point, creating a single double crossing. We depict the scenario in which we create a double crossing in Figure \ref{fig:unknot}.  Note that, we have generated another projection $P_0$ with the now closed path $\gamma$. Therefore, all crossings have valence $2n$, except the single double crossing that was possibly created.


Now we need to assign heights to the strands in the crossings of $\gamma$ so that our projection is in fact a projection of $K \amalg 0_1$.  We may simply have $\gamma$ pass over all crossings of $P_0$, and further choose the height of the strands in the crossings of $\gamma$ by order of first traversal, so that we obtain a projection $P_1$ of $K \amalg 0_1$.

Now we may compose $K$ with $0_1$ along disks as shown in Figure \ref{fig:Composition}, depending on whether or not we had to introduce a double crossing.  It is clear that if we had to introduce a double crossing, we can still compose $K$ with $0_1$ and then remove the double crossing with a type I Reidemeister move.  Hence we obtain a projection of $K \# 0_1 \simeq K$ with exactly $c_n$ $2n$-crossings, proving our claim.

If $K$ is a link, we must choose several basepoints $p_i$ and draw several knots, being sure to compose along sufficiently small disks; the conclusion follows just as in the case that $K$ is a knot.
\end{proof}

\begin{figure}
        \centering
        \begin{subfigure}[b]{0.3\textwidth}
                \centering
                \includegraphics[width=\textwidth]{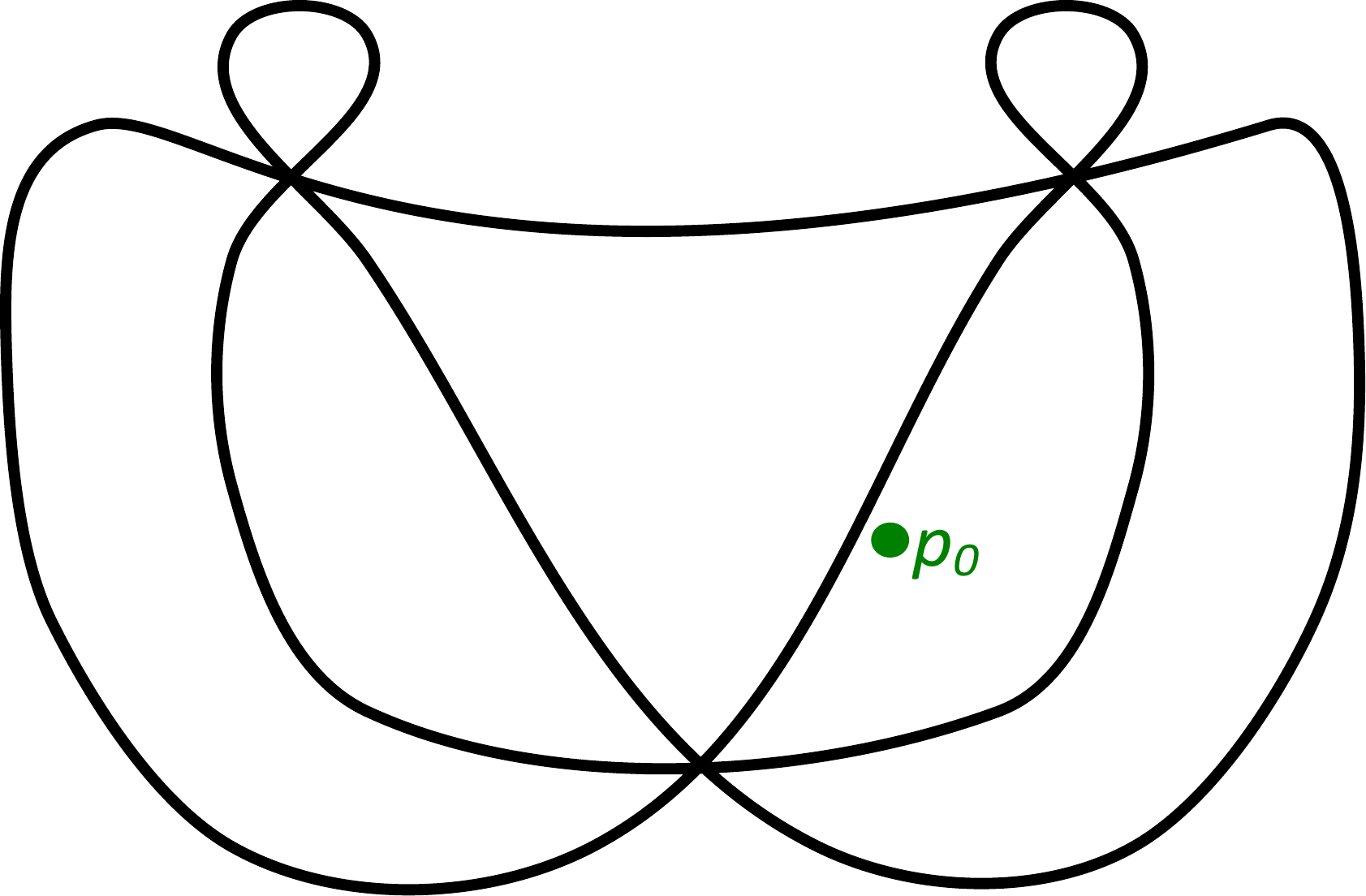}
                \caption{ }
                \label{fig:po}
        \end{subfigure}%
        \begin{subfigure}[b]{0.3\textwidth}
                \centering
                \includegraphics[width=\textwidth]{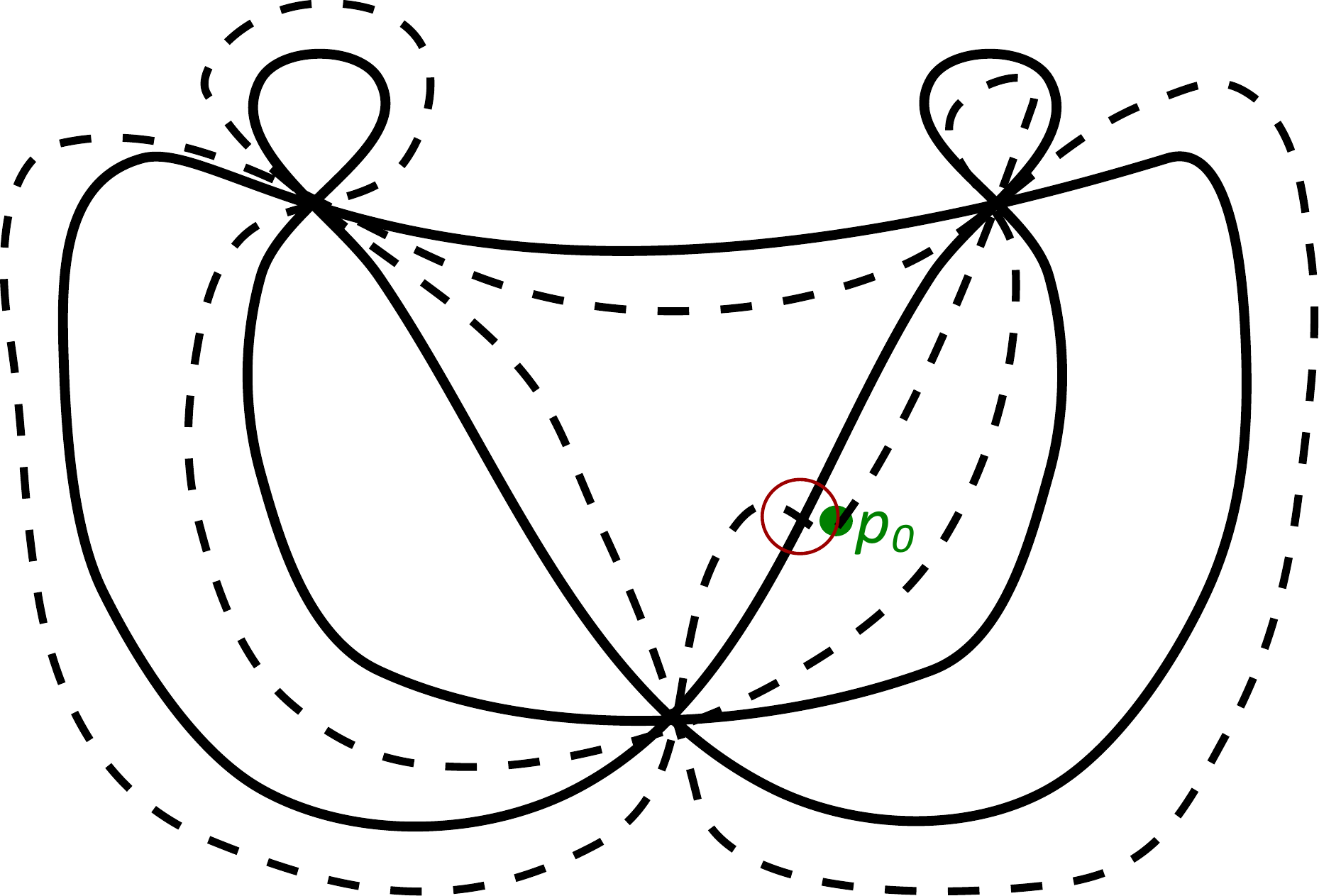}
                \caption{ }
                \label{fig:unknot}
        \end{subfigure}
        \begin{subfigure}[b]{0.3\textwidth}
                \centering
                \includegraphics[width=\textwidth]{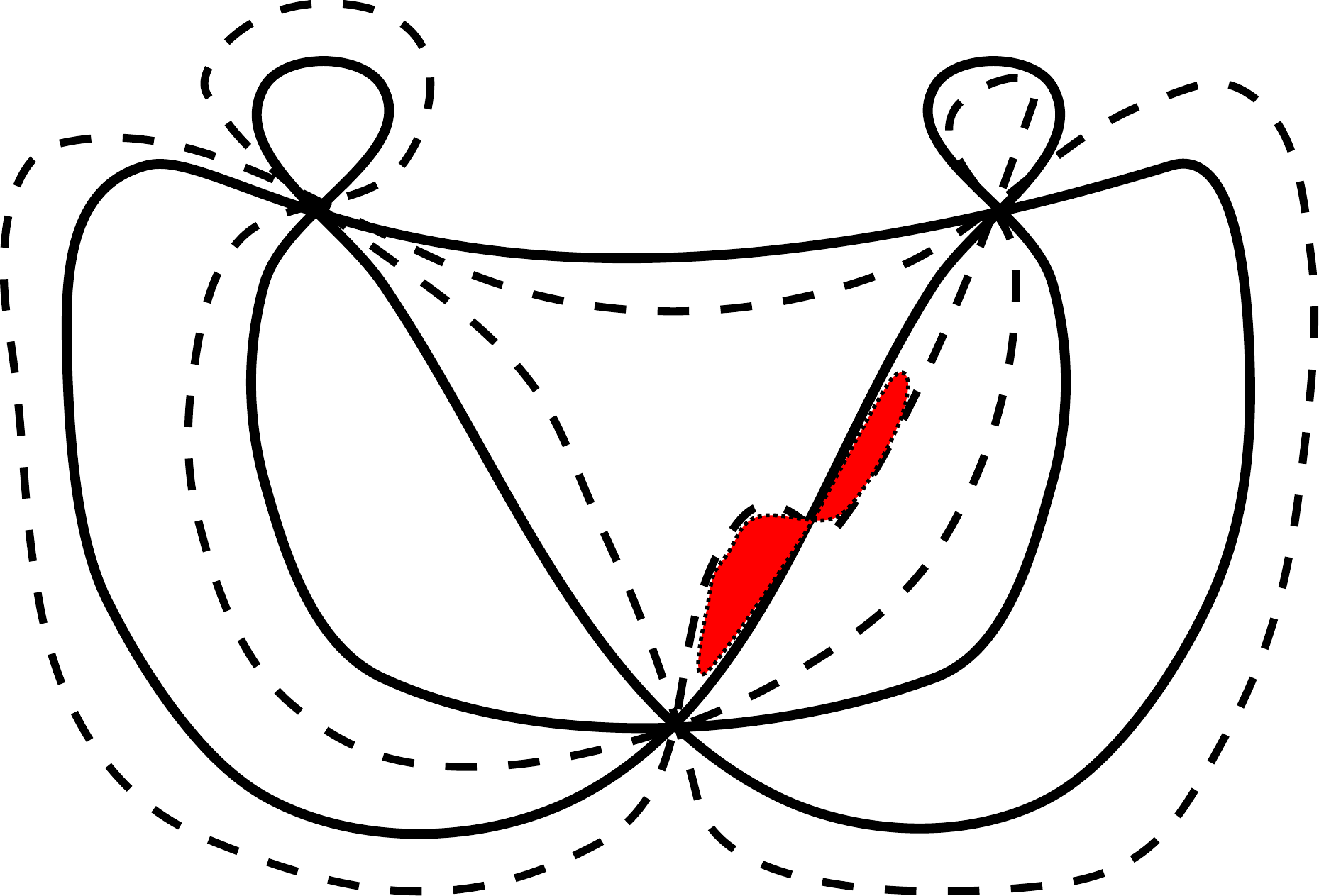}
                \caption{}
                \label{fig:Composition}
        \end{subfigure}
        \caption{(a) Knot diagram with $p_)$ specified. (b) Placing an unkot with the same planar diagram as $K$ on top of $K$. The added double-crossing is circled. (c) Composing $K$ with the unknot along the disk shown.}\label{fig:vitek}
\end{figure}


\section{Additivity of Crossing Number}\label{additivity}

It is known that $c_2(K_1 \# K_2) = c_2(K_1) + c_2(K_2)$ for  wide classes of knots $K_1$ and $K_2$.  So far there  is no counterexample that shows that the minimal double crossing number of any composite knot is not strictly additive. One might hope that the minimal multi-crossing number of any composite knot is also strictly additive.  However, for $n\ge 4$ there are counterexamples to the $c_n$-additivity under composition. 

We first consider the case of $n=3$. No counterexamples have yet been discovered showing that the triple crossing number of composite knots is not strictly additive. There do exist  classes of knots such that triple crossing number is  known to be strictly additive under composition.

In \citep{Triple} Adams introduced two types of triple crossing knots. $K$ is a Type I knot if $K$ is an alternating (in the classical double-crossing context) knot or link with a reduced alternating projection such that 

\begin{enumerate}
\item Every crossing is on the boundary of a complementary region that is a bigon. We refer to a sequence of such bigons touching one another end-to-end as a \textit{bigon chain}.
\item Every bigon chain of maximal length contains an even number of crossings.\
\end{enumerate}

Any collection of crossings in a projection that are the crossings of  the union of maximal bigon chains, each containing an even number of crossings, is said to satisfy the \textit{even bigon chain condition}. 
\\
\\
$K$ is a Type II knot if $K_2$ is an alternating knot or link with a reduced alternating projection where

\begin{enumerate}
\item There exists a circle that bisects exactly 3 crossings.
\item The rest of the crossings in $K$ satisfy the even bigon chain condition. 
\end{enumerate}

\begin{theorem} Let $K_1$ and $K_2$ be either Type I or Type II knots or links. \\

If either $K_1$ or $K_2$ is a Type I knot, then 
$ c_3(K_1\#K_2) = c_3(K_1) + c_3(K_2).$ 

If $K_1$ and $K_2$ are both Type II knots, then
$c_3(K_1) + c_3(K_2) - 1 \le  c_3(K_1\#K_2) \leq c_3(K_1) + c_3(K_2)$. 
\end{theorem}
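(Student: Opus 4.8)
The plan is to convert the additivity question into a short arithmetic comparison between the span lower bound on $c_3$ supplied by Theorem~\ref{we-did-it} and the exact triple-crossing numbers of the summands. Two standard facts make both sides computable. First, since Type I and Type II knots are alternating, the Kauffman--Murasugi--Thistlethwaite theorem \citep{Kauffman87, Murasugi, Thistlethwaite} gives $\text{span}\langle K\rangle = 4c_2(K)$ exactly. Second, the Kauffman bracket is multiplicative under composition, $\langle K_1\#K_2\rangle = \langle K_1\rangle\langle K_2\rangle$, so the top (resp.\ bottom) monomials of the two factors multiply without cancellation and $\text{span}\langle K_1\#K_2\rangle = \text{span}\langle K_1\rangle + \text{span}\langle K_2\rangle$.

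Next I would record the exact triple-crossing numbers of the summands. The construction of \citep{Triple} collapses each maximal bigon chain of length $2k$ into $k$ triple crossings, which for a Type I knot (all crossings lying in even bigon chains, so $c_2(K)$ even) yields a projection with $c_2(K)/2$ triple crossings, and for a Type II knot (three crossings on a bisecting circle together with even bigon chains, so $c_2(K)$ odd) yields $(c_2(K)+1)/2$ triple crossings. These counts are minimal: applying Theorem~\ref{we-did-it} at $n=3$, which reads $\text{span}\langle K\rangle \le 8c_3(K)$, together with $\text{span}\langle K\rangle = 4c_2(K)$ forces $c_3(K)\ge c_2(K)/2$, and integrality upgrades this to $c_3(K)\ge (c_2(K)+1)/2$ precisely when $c_2(K)$ is odd. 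Hence $c_3(K)=c_2(K)/2$ for Type I and $c_3(K)=(c_2(K)+1)/2$ for Type II, which I would record in the uniform form $\text{span}\langle K\rangle = 8c_3(K) - 4\epsilon_K$, with $\epsilon_K = 0$ for Type I and $\epsilon_K = 1$ for Type II.

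With these in place the theorem follows quickly. The upper bound $c_3(K_1\#K_2)\le c_3(K_1)+c_3(K_2)$ is the general subadditivity of triple-crossing number: juxtapose minimal triple projections of $K_1$ and $K_2$ and band them together, retaining every triple crossing of each summand. For the lower bound, feed the composite into $\text{span}\langle K_1\#K_2\rangle \le 8c_3(K_1\#K_2)$ and use span-additivity to obtain
\[ 8c_3(K_1\#K_2)\ge \text{span}\langle K_1\rangle + \text{span}\langle K_2\rangle = 8\bigl(c_3(K_1)+c_3(K_2)\bigr) - 4(\epsilon_{K_1}+\epsilon_{K_2}), \]
so that $c_3(K_1\#K_2)\ge c_3(K_1)+c_3(K_2) - \tfrac12(\epsilon_{K_1}+\epsilon_{K_2})$. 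Since the left-hand side is an integer, whenever at least one summand is Type I we have $\epsilon_{K_1}+\epsilon_{K_2}\le 1$, the fractional deficit is absorbed, and the bound becomes $c_3(K_1)+c_3(K_2)$, matching the upper bound and giving exact additivity. When both summands are Type II, $\epsilon_{K_1}+\epsilon_{K_2}=2$ yields only $c_3(K_1\#K_2)\ge c_3(K_1)+c_3(K_2)-1$, which together with subadditivity produces the stated two-sided estimate.

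The main obstacle is not the final arithmetic but the input values of $c_3$ for Type I and Type II knots. Their minimality rests squarely on the span inequality of Theorem~\ref{we-did-it}, while their attainability rests on verifying the \citep{Triple} bigon-chain and bisecting-circle constructions; once those are granted, the parity and integrality bookkeeping that separates the ``at least one Type I'' case from the ``both Type II'' case is routine. For links one chooses basepoints on each component and composes along sufficiently small disks, and the same span computation carries through.
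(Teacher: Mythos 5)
Your proof is correct, and its engine is the same as the paper's: additivity of the bracket span under connected sum, the Type I/II span formulas $\bspn{K_1} = 8c_3(K_1)$ and $\bspn{K_2} = 8c_3(K_2)-4$ (in your notation, $\bspn{K} = 8c_3(K) - 4\epsilon_K$), and subadditivity of $c_3$ for the upper bound. You differ in two ways worth noting. First, the paper simply cites the span formulas for Type I and Type II knots from Adams \citep{Triple}, whereas you re-derive them from the Kauffman--Murasugi--Thistlethwaite equality $\bspn{K}=4c_2(K)$, the inequality $\bspn{K}\le 8c_3(K)$, and the bigon-chain/bisecting-circle constructions; your counts $c_3=c_2/2$ and $c_3=(c_2+1)/2$ are exactly equivalent to the cited formulas, but note that your derivation still leans on \citep{Triple} for the realizability of those constructions (in particular, that the three bisected crossings of a Type II projection collapse to two triple crossings), so this is a repackaging rather than an independent proof of that input. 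Second---and this is the genuine structural difference---the paper's two equality cases apply the \emph{exact} span formula to the composite itself, which requires the auxiliary facts that Type I $\#$ Type I is Type I and Type I $\#$ Type II is Type II; your argument never needs these type-preservation claims, because you use only the universal inequality $\bspn{K_1\#K_2}\le 8c_3(K_1\#K_2)$ on the composite and then absorb the deficit $\tfrac12(\epsilon_{K_1}+\epsilon_{K_2})\le \tfrac12$ by integrality whenever at least one summand is Type I. That makes your route slightly more economical: it yields exact additivity in the mixed and doubly Type I cases without analyzing how the Type I/II conditions behave under composition, at the cost of a small integrality trick; in the doubly Type II case the two arguments coincide.
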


\begin{proof}
In both cases, the natural composition of minimal triple-crossing diagrams of $K_1$ and $K_2$ yields $c_3(K_1\#K_2) \leq c_3(K_1) + c_3(K_2)$.  Note that as $K_1, K_2$ are alternating, we have $\bspn{K_1} = 4c_2(K_1)$ and $\bspn{K_2} = 4c_2(K_2)$.  A previous result of Adams \citep{Triple} shows that for knots and links of of Type I, $\bspn{K} = 8c_3(K)$ and for knots and links of Type II, $\bspn{K} = 8c_3(K) - 4 \,$.

If both $K_1$ and $K_2$ are Type I knots,  then $K_1\#K_2$ is a Type I knot as well. Hence:

$$8c_{3}(K_1\#K_2) =\bspn{K_1\#K_2}=\bspn{K_1}+\bspn{K_2} = 8c_{3}(K_1)+8c_{3}(K_2)$$

Therefore, $c_3(K_1\#K_2)=c_{3}(K_1)+c_{3}(K_2)$. If $K_1$ is a Type I knot and $K_2$ is a Type II knot then $K_1\#K_2$ is a Type II knot as well. Hence:

$$8c_{3}(K_1\#K_2) - 4 =\bspn{K_1\#K_2}=\bspn{K_1}+\bspn{K_2} = 8c_{3}(K_1)+8c_{3}(K_2)-4$$

Therefore, $c_3(K_1\#K_2)=c_{3}(K_1)+c_{3}(K_2)$. Suppose $K_1$ and $K_2$ are both Type II knots, then 

$$8c_{3}(K_1\#K_2) \geq \bspn{K_1\#K_2}=\bspn{K_1}+\bspn{K_2} = 8c_{3}(K_1) -4+8c_{3}(K_2)-4.$$

Therefore $c_{3}(K_1)+c_{3}(K_2)\geq c_3(K_1\#K_2) \geq c_{3}(K_1)+c_{3}(K_2)-1$.

\end{proof}

\medskip

A counterexample to additivity for $n = 4$ may be constructed as follows. We know from \citep{Quadruple} that $c_4(5_2) = c_4(6_2) = 2$, and claim that $c_4(5_2 \# \overline{6_2}) = 3$.  This is illustrated pictorially in Figure \ref{fig:quad-comp}.  We can perform a move III, defined in \citep{Quadruple} and illustrated in Figure~\ref{fig:quad-comp}, that can turn three double crossings into a quadruple crossing.  Note that the diagrams given for $5_2$ and $\overline{6_2}$ are not quadruple crossing diagrams.

\begin{figure}[H]
\begin{center}
\includegraphics[scale=0.4]{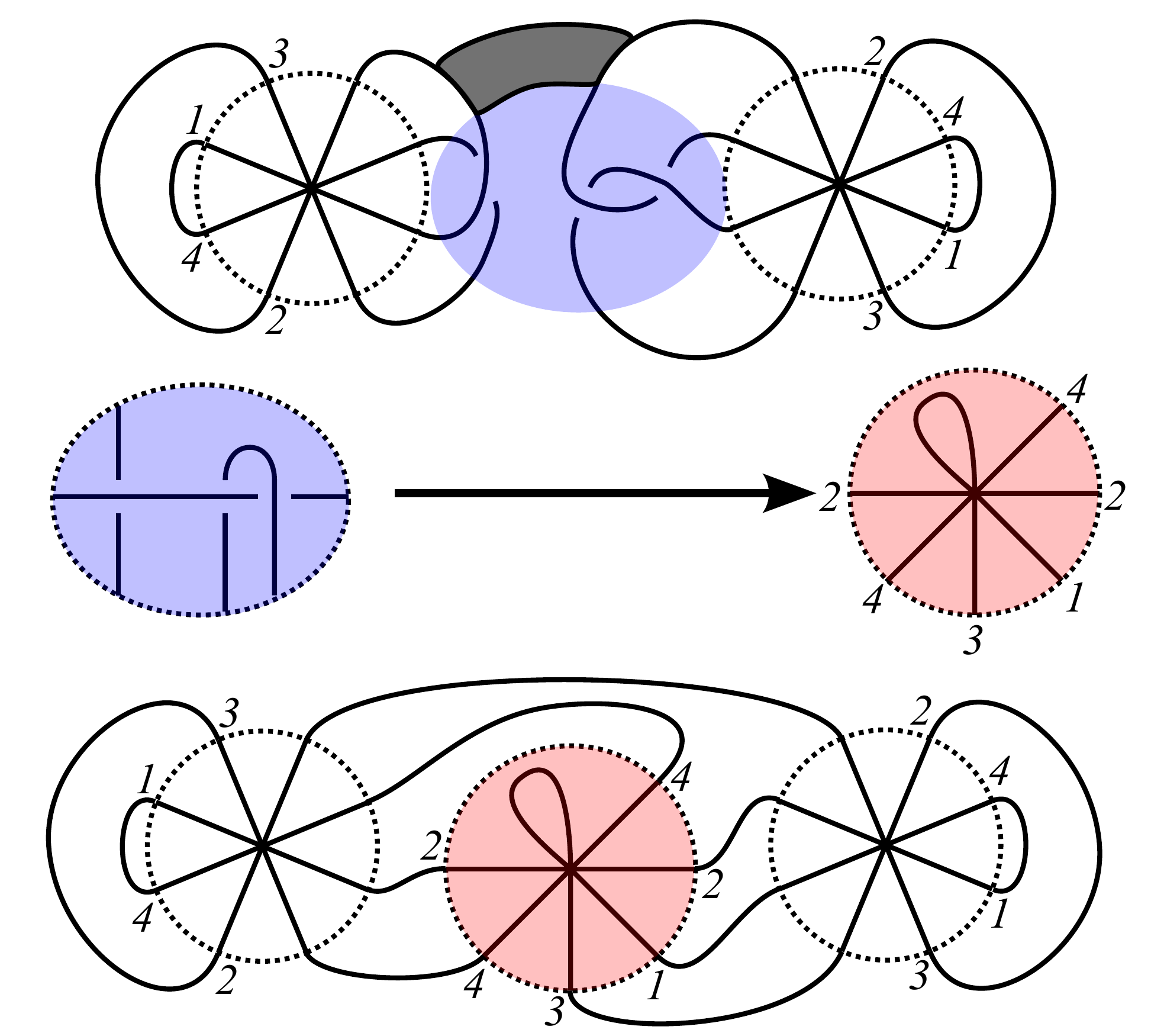}
\caption{An illustration of the composition of $5_2$ and $\overline{6_2}$ violating additivity of quadruple crossing number.}
\label{fig:quad-comp}
\end{center}
\end{figure}

The counterexample to $c_5$-additivity is almost analogous.  We know that $c_5(6_2)=2$ because only the trefoil and figure-eight knots have 5-crossing number 1, and $6_2$ can be realized with 5-crossing number 2.
However, $c_5(6_2 \# 6_2) = 3$.  To see this, we perform a similar construction as above with $6_2$ and $6_2$, starting from a projection with one 5-crossing and two double crossings as in  in Figure \ref{fig:quintcomp}. The required move after composition of the projections is also illustrated in that figure.

\begin{figure}[H]
\begin{center}
\includegraphics[scale=0.8]{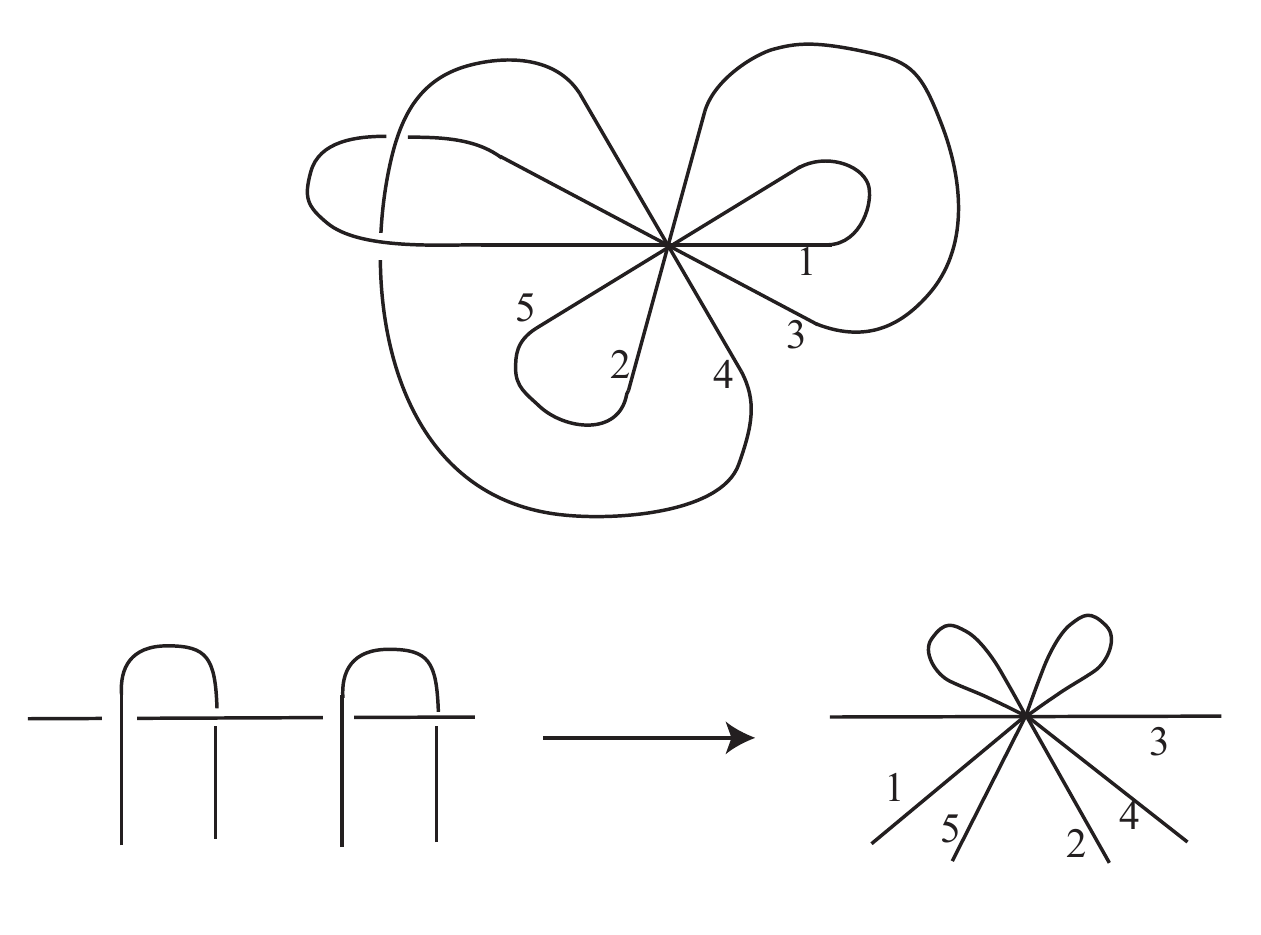}
\caption{Composing $6_2$ with itself using a move that generates three 5-crossings for the result. }
\label{fig:quintcomp}
\end{center}
\end{figure}

For $n \ge 6$ we claim that $c_n(3_1) = c_n(3_1 \# 3_1) = 1$.  From the table we know that $c_6(3_1) = c_7(3_1) = 1$ and $c_6(3_1 \# 3_1) = c_7(3_1 \# 3_1) = 1$. From Lemma \ref{2n}, we know $c_n(K) \ge c_{n+2}(K)$, thereby proving the result.

More generally, let $T_{r,r+1}$ be the $(r, r+1)$-torus knot for $r \geq 2$. Corollary 2.1 of \citep{Geometry} and Corollary 4.16 of \citep{Multi} imply that $c_n(T_{r,r+1}) > 1$ for all $n < 2r$ and $c_n(T_{r,r+1}) = 1$ for all $n \geq 2r$. Corollary 2.3 of \citep{Geometry} implies that $c_{4r}(T_{r,r+1} \# T_{r,r+1}) = 1.$ Hence, $c_n(T_{r,r+1}) =c_{n}(T_{r,r+1} \# T_{r,r+1})$ for all $n \geq 4r$.

We remark that each of these counterexamples satisfies the inequality $c_n(K_1) + c_n(K_2) - 1 \le c_n(K_1 \# K_2) \le c(K_1) + c(K_2)$ for $n\geq4$.  Currently, there are no examples of knots $K_1$ and $K_2$ such that $c_n(K_1 \# K_2)<c_n(K_1) + c_n(K_2) - 1.$


\section{Table}\label{chart}

The following Table illustrates the first extensive list of calculations of $n$-crossing number knots, and consequently the crossing spectrum of certain knots. Note that non-alternating knots tend to have a smaller $n$-crossing number for some $n$ when compared to alternating knots that have the same $c_2$ number. 

We now detail how of each number in the Table was obtained. All numbers with $^\dagger$ came from \citep{Quadruple}  All numbers with $^*$ came by permuting all triple crossing diagrams with $n$ crossings and then identifying them with SnapPy (see \citep{Snap}). If the knot was not previously identified by an exhaustive search of $n-1, n-2 \dots$ triple crossing diagrams, then the knot must have triple-crossing number of $n$. 
All the numbers with $^{**}$ were obtained by using SnapPy to identify diagrams with two quintuple crossings.  If the knot was not previously identified by an exhaustive search of  diagrams with one quintuple crossing, then the knot must have a quintuple-crossing number of $2$. We explored two crossing quintuple diagrams with only two types of quintuple crossings: 14203 and 13024. However, we have not exhausted all $c_5=2$ diagrams with these two types of crossings.
 All numbers with $^\star$ were obtained through our exhaustive method search of  diagrams with a single  $n$-crossing and their subsequent identification by a computer program. 
All numbers with $^\ddagger$ have two even bigon chains of length 4. Since each such bigon chain can be isotoped into a single  quintuple crossing, we were able to find their quintuple crossing number. 
 All numbers with $^\diamond$ have a crossing spectrum with two ones in a row, and by the inequalities  5.1 and 5.2, it must be a one.  
 All numbers with $^\Box$ have  $c_n=2$ because $c_{n-2} = 2$ and the knot did not show up in the exhaustive method of search of diagrams with a single $n$-crossing, so $c_n \neq 1$.  
 All numbers with $^\rhd$ belong to composite knots only, denoted $K_1 \# K_2$.  We know that $c_n(K_1) = c_n(K_2) = 1$ and we know that $c_n(K_1 \# K_2) \leq c_n(K_1) + c_n(K_2)$. Since $K_1 \# K_2$ did not show up on the exhaustive method with $c_n = 1$, then $c_n(K_1 \# K_2) = 2$.  
All numbers with $^\bullet$ were obtained in a similar manner to those numbers obtained as in $^\rhd$ but with compositions of three knots rather than two.

\newpage
\[
\begin{array}[h]{ | l | l | l | l | l | l | l | l | l| | l | l | l | l | l | l | l | l | l | }
\hline
	Knot 	& c_2 	& c_3 	& c_4 		& c_5 		& c_6 		& c_7 & c_8 & c_9 
& 	Knot 	& c_2 	& c_3 	& c_4 		& c_5 		& c_6 		& c_7 & c_8 & c_9 \\ \hline
	3\_1 	& 3 	& 2^*	& 1^\dagger	& 1^\star	& 1^\star 	& 1^\diamond & 1^\diamond & 1^\diamond 
&	9\_10 	& 9 	&  	& 3^\dagger	&  		&  		&  		& 1^\star &  \\ \hline
	4\_1 	& 4 	& 2^*	& 2^\dagger	& 1^\star	& 1^\star 	& 1^\diamond & 1^\diamond & 1^\diamond 
& 	9\_11 	& 9 	&  	& 3^\dagger	&  		&  		&  		& 1 &  \\ \hline
	5\_1 	& 5 	& 4^*	& 2^\dagger	& 2^{**} 	& 1^\star	& 1^\star 	& 1^\diamond & 1^\diamond 
& 	9\_12 	& 9 	& 5^* & 3^\dagger	&  		&  		&  		& 1^\star &  \\ \hline
	5\_2 	& 5 	& 3^* & 2^\dagger	& 2^{**} 	& 1^\star 	& 1^\star 	& 1^\diamond & 1^\diamond 
& 	9\_13 	& 9 	&  	& 3^\dagger	&  		&  		&  		& 1^\star &  \\ \hline
	6\_1 	& 6 	& 3^* & 2^\dagger	& 2^{**} 	& 1^\star	& 1^\star 	& 1^\diamond & 1^\diamond 
& 	9\_14 	& 9 	& 5^* & 3^\dagger	&  		&  		&  		& 1^\star &  \\ \hline
	6\_2 	& 6 	& 4^* & 2^\dagger	& 2^{**} 	& 1^\star 	& 1^\star 	& 1^\diamond & 1^\diamond 
& 	9\_15 	& 9 	& 5^* & 3^\dagger	&  		&  		&  		& 1^\star &  \\ \hline
	6\_3 	& 6 	& 4^* & 2^\dagger	& 2^{**} 	& 1^\star 	& 1^\star 	& 1^\diamond & 1^\diamond 
& 	9\_16 	& 9 	&  	& 3^\dagger	&  		& 	 	&  		& 1 &  \\ \hline
	3\_1\#3\_1 
		& 6 	& 4^* & 2^\dagger	& 2^{**} 	& 1^\star 	& 1^\star 	& 1^\diamond & 1^\diamond 
& 	9\_17 	& 9 	&  & 3 &  		&  		&  		& 1^\star 	&  \\ \hline
	3\_1\#m3\_1 
		& 6 	& 4^* & 2^\dagger	& 2^{**} 	& 1^\star 	& 1^\star 	& 1^\diamond & 1^\diamond 
& 	9\_18 	& 9 	&  	& 3^\dagger	&  		&  		&  		& 1^\star &  \\ \hline
	7\_1 	& 7 	&  	& 2^\dagger	& 2^{**} 	& 2^\Box	& 1^\star 	& 1^\star & 1^\diamond 
& 	9\_19 	& 9 	& 5^* & 3^\dagger	&  		&  		&  		& 1^\star &  \\ \hline
	7\_2 	& 7 	& 4^* & 2^\dagger	&  		& 2^\Box	& 1^\star 	& 1^\star & 1^\diamond 
& 	9\_20 	& 9 	&  	& 3^\dagger	&  		&  		&  		& 1^\star &  \\ \hline
	7\_3 	& 7 	& 5^* & 2^\dagger	& 2^{**} 	& 2^\Box	& 1^\star 	& 1^\star & 1^\diamond 
& 	9\_21 	& 9 	& 5^* & 3^\dagger	&  		&  		&  		& 1^\star &  \\ \hline
	7\_4 	& 7 	& 4^* & 2^\dagger	& 2^\ddagger& 2^\Box	& 1^\star 	& 1^\star & 1^\diamond 
& 	9\_22 	& 9 	&  	& 3^\dagger	&  		&  		&  		& 1^\star &  \\ \hline
	7\_5 	& 7 	& 5^* & 2^\dagger	& 2^{**} 	& 2 ^\Box 	& 1^\star 	& 1^\star & 1^\diamond 
& 	9\_23 	& 9 	& 	& 3^\dagger	&  		&  		&  		&  &  \\ \hline
	7\_6 	& 7 	& 4^* & 3^\dagger	& 	 	&  		& 1^\star 	& 1^\star & 1^\diamond 
& 	9\_24 	& 9 	&  	& 3^\dagger	&  		&  		&  		& 1^\star &  \\ \hline
	7\_7 	& 7 	& 4^* & 3^\dagger	&  		&  		& 1^\star 	& 1^\star & 1^\diamond 
& 	9\_25 	& 9 	& 5^* & 3^\dagger	&  		&  		&  		& 1^\star &  \\ \hline
	3\_1\#4\_1 
		& 7 	& 4^* &  		& 2^\rhd 	& 2^\rhd 	& 1^\star 	& 1^\star & 1^\diamond 
& 	9\_26 	& 9 	&  	&  		&  		&  		&  		& 1^\star &  \\ \hline
	8\_1 	& 8 	& 4^* & 3^\dagger	& 2^\ddagger&  		& 1^\star 	& 1^\star & 1^\diamond 
& 	9\_27 	& 9 	&  	& 3^\dagger	&  		&  		&  		& 1^\star &  \\ \hline
	8\_2 	& 8 	&  	& 3^\dagger	&  		&  		&  		& 1^\star &  
& 	9\_28 	& 9 	&  	& 3^\dagger	&  		&  		&  		& 1^\star &  \\ \hline
	8\_3 	& 8 	& 4^*	& 2^\dagger	& 2^\ddagger& 2^\Box	& 1^\star 	& 1^\star & 1^\diamond 
& 	9\_29 	& 9 	&  	& 3^\dagger	&  		&  		&  		&  	&  \\ \hline
	8\_4 	& 8 	& 5^* & 2^\dagger	&  		& 2^\Box	& 1^\star 	& 1^\star & 1^\diamond 
& 	9\_30 	& 9 	&  	& 3^\dagger	&  		&  		&  		& 1^\star &  \\ \hline
	8\_5 	& 8 	&  	& 3^\dagger	&  		&  		&  		& 1^\star &  
& 	9\_31 	& 9 	&  	&  		&  		&  		&  		& 1^\star &  \\ \hline
	8\_6 	& 8 	& 5^* & 3^\dagger	&  		&  		& 1^\star 	& 1^\star & 1^\diamond 
& 	9\_32 	& 9 	&  	&  		&  		&  		&  		& 1^\star &  \\ \hline
	8\_7 	& 8 	&  	& 3^\dagger	&  		&  		& 1^\star 	& 1^\star & 1^\diamond 
& 	9\_33 	& 9 	&  	&  		&  		&  		&  		& 1^\star &  \\ \hline
	8\_8 	& 8 	& 5^* & 3^\dagger	&  		&  		& 1^\star 	& 1^\star & 1^\diamond 
& 	9\_34 	& 9 	&  	&  		&  		&  		& 		& 	 &  \\ \hline
	8\_9 	& 8 	&  	& 2^\dagger	& 2^{**} 	& 2^\Box	& 1^\star 	& 1^\star & 1^\diamond 
& 	9\_35 	& 9 	& 5^* & 3^\dagger	&  		&  		&  		& 1^\star &  \\ \hline
	8\_10 	& 8 	&  	& 3^\dagger	&  		&  		&  		& 1^\star &  
& 	9\_36 	& 9 	&  	& 3^\dagger	&  		&  		&  		& 1^\star &  \\ \hline
	8\_11 	& 8 	& 5^* & 3^\dagger	&  		&  		&  		& 1^\star &  
& 	9\_37 	& 9 	& 5^* & 3^\dagger	&  		&  		&  		& 1^\star &  \\ \hline
	8\_12 	& 8 	& 4^* & 3^\dagger	&  		&  		&  		& 1^\star &  
& 	9\_38 	& 9 	&  	& 3^\dagger	&  		&  		&  		&  	&  \\ \hline
	8\_13 	& 8 	& 5^*	&  		&  		&  		&  		& 1^\star &  
& 	9\_39 	& 9 	& 5^* & 3^\dagger	&  		&  		&  		& 1^\star &  \\ \hline
	8\_14 	& 8 	& 5^* & 3^\dagger	&  		&  		&  		& 1^\star &  
& 	9\_40 	& 9 	&  	&  		&  		&  		&  		&  	&  \\ \hline
	8\_15 	& 8 	& 5^* & 3^\dagger	& 2^{**} 	&  		& 2^\Box	& 1^\star &  
& 	9\_41 	& 9 	& 5^* & 3^\dagger	&  		&  		&  		& 1^\star &  \\ \hline
	8\_16 	& 8 	&  	&  		&  		&  		&  		& 1^\star &  
& 	9\_42 	& 9 	& 4^* & 3^\dagger	& 2^{**} 	&  		& 1^\star 	& 1^\star & 1^\diamond \\ \hline
	8\_17 	& 8 	&  	&  		& 2^{**} 	&  		& 2^\Box	& 1^\star &  
& 	9\_43 	& 9 	&  	& 3^\dagger	&  		&  		& 1^\star 	& 1^\star & 1^\diamond \\ \hline
	8\_18 	& 8 	&  	&  		& 2^{**} 	&  		&  		& 	 &  
& 	9\_44 	& 9 	& 4^* & 3^\dagger	&  		&  		& 1^\star 	& 1^\star & 1^\diamond \\ \hline
	8\_19 	& 8 	&  	& 2^\dagger	& 2^{**} 	& 1^\star 	& 1^\star 	& 1^\diamond & 1^\diamond 
& 	9\_45 	& 9 	& 4^* & 3^\dagger	&  		&  		& 1^\star 	& 1^\star & 1^\diamond \\ \hline
	8\_20 	& 8 	& 4^* & 2^\dagger	&  		& 1^\star 	& 1^\star 	& 1^\diamond & 1^\diamond 
& 	9\_46 	& 9 	& 4^* & 2^\dagger	&  		& 2^\Box	& 1^\star 	& 1^\star & 1^\diamond \\ \hline
	8\_21 	& 8 	& 4^* & 2^\dagger	& 2^{**} 	& 1^\star 	& 1^\star 	& 1^\diamond & 1^\diamond 
& 	9\_47 	& 9 	&  	&  		& 2^{**} 	&  		& 2^\Box	& 1^\star &  \\ \hline
	3\_1\#5\_1 
		& 8 	&  	&  		&  		& 2^\rhd 	& 2^\rhd 	& 1^\star &  
& 	9\_48 	& 9 	& 4^* & 3^\dagger	&  		&  		&  		& 1^\star &  \\ \hline
	3\_1\#m5\_1 
		& 8 	&  	&  		&  		& 2^\rhd 	& 1^\star 	& 1^\star & 1^\diamond 
& 	9\_49 	& 9 	&  	& 3^\dagger	& 2^{**} 	&  		& 2^\Box	& 1^\star &  \\ \hline
	3\_1\#5\_2 
		& 8 	& 5^* &  		&  		& 2^\rhd 	& 2^\rhd 	& 1^\star &  
& 	3\_1\#3\_1\#3\_1 
		& 9 	& 6^* &  		& 		& 2^\bullet	& 2^\bullet	& 1^\star &  \\ \hline
	3\_1\#m5\_2 
		& 8 	& 5^* &  		&  		& 2^\rhd 	& 1^\star 	& 1^\star & 1^\diamond 
& 	3\_1\#3\_1\#m3\_1 
		& 9 	& 6^* &  		&  		& 2^\bullet 	& 2^\bullet 	& 1^\star &  \\ \hline
	4\_1\#4\_1 
		& 8 	& 4^* &  		& 2^\rhd 	& 2^\rhd 	& 2 		& 1^\star &  
& 	3\_1\#6\_1 
		& 9	& 5^* &  		&  		& 2^\rhd 	& 2^\rhd 	& 1^\star &  \\ \hline
	9\_1 	& 9 	&  	& 3^\dagger	&  		&  		&  		& 1^\star &  
& 	3\_1\#m6\_1 
		& 9 	& 5^* &  		&  		& 2^\rhd 	& 2^\rhd 	& 1^\star &  \\ \hline
	9\_2 	& 9 	& 5^* & 3^\dagger	&  		&  		&  		& 1^\star &  
& 	3\_1\#6\_2 
		& 9 	&  	&  		&  		& 2^\rhd 	& 2^\rhd 	& 1^\star &  \\ \hline
	9\_3 	& 9 	& 	& 3^\dagger	&  		&  		&  		& 1^\star &  
& 	3\_1\#m6\_2 
		& 9 	& 	&  		&  		& 2^\rhd 	& 2^\rhd 	& 1^\star &  \\ \hline
	9\_4 	& 9 	&  	& 3^\dagger	&  		&  		&  		& 1^\star &  
& 	3\_1\#6\_3 
		& 9 	&  	&  		&  		& 2^\rhd 	& 2^\rhd 	& 1^\star &  \\ \hline
	9\_5 	& 9 	& 5^* & 3^\dagger	&  		&  		&  		& 1^\star &  
& 	4\_1\#5\_1 
		& 9 	&  	&  		&  		& 2^\rhd 	& 2^\rhd 	& 1^\star &  \\ \hline
	9\_6 	& 9 	&  	& 3^\dagger	&  		&  		&  		& 1^\star &  
& 	4\_1\#m5\_1 
		& 9 	&  	&  		&  		& 2^\rhd 	& 2^\rhd 	& 1^\star &  \\ \hline
	9\_7 	& 9 	&  	& 3^\dagger	&  		&  		&  		& 1^\star &  
& 	4\_1\#5\_2 
		& 9 	& 5^* &  		&  		& 2^\rhd 	& 2^\rhd 	& 1^\star &  \\ \hline
	9\_8 	& 9 	& 5^* & 3^\dagger	&  		&  		&  		& 1^\star &  
& 	4\_1\#m5\_2 
		& 9 	& 5^* &  		&  		& 2^\rhd 	& 2^\rhd 	& 1^\star &  \\ \hline
	9\_9 	& 9 	&  	& 3^\dagger	&  		&  		&  		& 1^\star &  
&  &  &  &  &  &  &  &  &  \\ \hline
\end{array}\]


\bibliographystyle{alpha}
\bibliography{algebra-references}

\end{document}